\newtheorem{thm}{Theorem}[section]
\newtheorem{cor}[thm]{Corollary}
\newtheorem{lem}[thm]{Lemma}
\newtheorem{prop}[thm]{Proposition}
\newtheorem{defn}[thm]{Definition}
\newtheorem{rem}[thm]{Remark}
\newcommand{\norm}[1]{\left\Vert#1\right\Vert}
\newcommand{\abs}[1]{\left\vert#1\right\vert}
\newcommand{\set}[1]{\left\{#1\right\}}
\newcommand{\Real}{\mathbb R}
\newcommand{\pfrac}[2]{\frac{\partial #1}{\partial #2}}
\newcommand{\bx}{\mathrm{x}}
\numberwithin{equation}{section}
\begin{document}
\title{Curvature at the infinity of asymptotically flat Einstein manifold}

\author{Bing Wang}
\author{Hao Yin}

\address{Bing Wang,  Institute of Geometry and Physics, and School of Mathematical Sciences,
University of Science and Technology of China, Hefei 230026, China;
Hefei National Laboratory, Hefei 230088, China}
\email{topspin@ustc.edu.cn}
\address{Hao Yin,  School of Mathematical Sciences,
University of Science and Technology of China, Hefei 230026, China}
\email{haoyin@ustc.edu.cn }

\begin{abstract}
	In this paper, for $m\geq4$, we construct coordinates at the infinity of an asymptotically flat end of a Ricci-flat manifold $(M^m,g)$ as long as the $L^{m/2}$ norm of the curvature is finite in this end. As applications, we can define a Weyl tensor at the infinity of $M$ and a version of renormalized volume following Biquard and Hein to ALE Ricci-flat manifolds/orbifolds of any dimension.
\end{abstract}

\maketitle
\section{Introduction}
\label{sec:intro}

Suppose that $(M,g)$ is an $m$-dimensional Riemannian manifold ($m\geq 4$) and $\Omega$ is an end of $M$ satisfying
\begin{enumerate}
	\item $(M,g)$ is Ricci-flat on $\Omega$;
	\item $Vol(B(o,r)\cap \Omega)\geq C r^m$, for some fixed $o\in M$ and $C>0$ and any large $r>0$;
	\item $\int_\Omega \abs{Rm}^{m/2} dV_g<\infty $.
\end{enumerate}
It was proved in a famous paper of Bando, Kasue and Nakajima \cite{bando1989on} that this end is ALE of order $\tau=m-1$ in the sense that there exist a compact subset $K\subset M$, $R>0$, a finite subgroup $\Gamma\subset O(m)$ acting freely on $\Real^m \setminus B(0,R)$, a diffeomorphism $\mathcal L$ from $\Omega\setminus K$ to $(\Real^m \setminus B(0,R))/\Gamma$ such that if
\[
	\tilde{g}:= (\mathcal L^{-1}\circ {\rm proj})^* g
\]
where ${\rm proj}$ is the projection from $\Real^m\setminus B(0,R)$ to $(\Real^m\setminus B(0,R))/\Gamma$, then
\[
	\tilde{g}_{ij}(z)= \delta_{ij} + O(\abs{z}^{-\tau})
\]
and for any positive integer $k$
\begin{equation}
	\label{eqn:der}
	\partial^{(k)} \tilde{g}_{ij}(z)=O(\abs{z}^{-\tau-k}).
\end{equation}
The estimates on the derivatives follow from the Ricci-flat condition and the fact that the coordinate obtained in \cite{bando1989on} is a harmonic coordinate system. 

Moreover, in the case $m=4$, or $(M,g)$ is K\"ahler, this $\tau$ can be improved to be $m$ (see Theorem 1.5 of \cite{bando1989on}). Later, it was observed by Cheeger and Tian \cite{cheeger1994} that as long as $\Gamma$ is not trivial, $\tau$ can be taken to be $m$ in general. Kr\"oncke and Szab\'o recently \cite{kroencke2022optimal} pointed out a gap in \cite{cheeger1994} and fixed it as a corollary of their main theorems. It is worth noting that both \cite{cheeger1994} and \cite{kroencke2022optimal} discuss the more general setting of asymptotically conical end.

While $\tau=m$ is optimal, various authors continue to improve our understanding towards the regularity of metric at the infinity. For example, Chen \cite{chen2020expansion} proved a full expansion of $\tilde{g}_{ij}$ in the coordinate constructed in \cite{bando1989on}. Recently, more precise expansion for harmonic functions and harmonic one-forms was obtained by Chen and Yan \cite{chen2025harmonicformsalericciflat}.

For $m=4$, Biquard and Hein conducted a detailed analysis of the leading term $h_0$ in the expansion of $\tilde{g}_{ij}$. In particular, in Section 2 of \cite{biquard2023renormalized}, it was proved that $h_0$ is the sum of harmonic gauge terms and Kronheimer terms. While the former could be removed by a further coordinate change, the latter was shown to be exactly the leading term of the metric expansion for the family of ALE K\"ahler Ricci-flat four-manifolds due to Kronheimer. Although not explicitly mentioned, the computation in Proposition 2.6 in \cite{biquard2023renormalized} actually established an isomorphism between the space of Kronheimer terms and the space of Weyl tensors on $\Real^4$ (see also Appendix A of \cite{ozuch2024}). 

\subsection{The main results}

In this paper, we revisit the problem of establishing optimal decay rates and characterizing the leading terms in general dimensions. We need Definition \ref{defn:O}.
\begin{defn}
	\label{defn:O}
	For a function $f$ defined on $\Real^m\setminus  B(0,R)$, it is said to be in $O(\abs{x}^{-\tau})$ if there exist constants $C(k)$ such that
	\[
		\abs{\partial^{(k)} f}(x)\leq C(k) \abs{x}^{-\tau-k} \qquad \text{for} \quad k=0,1,2,\cdots 
	\]
	on $\Real^m \setminus B(0,R)$.
\end{defn}
Our first result is
\begin{thm}
	\label{thm:main1}
	Let $(M,g)$ be an $m$-dimensional Riemannian manifold and $\Omega$ is an end satisfying
	\begin{enumerate}
		\item $(M,g)$ is Ricci-flat on $\Omega$;
		\item $Vol(B(o,r)\cap \Omega)\geq C r^m$, for some fixed $o\in M$ and $C>0$ and any large $r>0$;
		\item $\int_\Omega \abs{Rm}^{m/2} dV_g<\infty $.
	\end{enumerate}
	Then there is a compact set $K$ in $M$ such that there is a diffeomorphism $\Psi$ from the universal cover of $\Omega \setminus K$ to $\Real^m \setminus B$ that gives a coordinate system satisfying

	(I) it is Bianchi, in the sense that if $g_e$ is the flat metric given by the coordinates, then
	\[
		B_{g_e}g := \delta_{g_e}g - \frac{1}{2} d({\rm Tr}_{g_e} g)=0.
	\]
	Moreover, this is equivalent to saying that $\Psi^{-1}$ is a harmonic map from $g_e$ to $g$.

	(II) the metric tensor has an expansion
	\begin{equation}
		\label{eqn:expansion}
		g_{ij}= \delta_{ij} + W_{iklj}\frac{x_k x_l}{\abs{x}^{{m}+2}} + O(\abs{x}^{-m-1}).
	\end{equation}
	Here $W_{iklj}$ is a Weyl tensor.
\end{thm}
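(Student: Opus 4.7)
The plan is to bootstrap the Bando--Kasue--Nakajima coordinates of \cite{bando1989on} into the Bianchi gauge by a diffeomorphism close to the identity, and then exploit the fact that in this gauge the Ricci-flat equation becomes quasilinear elliptic with the Euclidean Laplacian as principal part; the expansion in (II) is then read off from an iterative asymptotic solution. First, I would lift the BKN chart to the universal cover of $\Omega\setminus K$ (which is $\mathbb R^m\setminus B$ since $\Gamma$ acts freely at infinity) to get $\tilde{\mathcal L}$ with $\tilde g_{ij}=\delta_{ij}+O(\abs{x}^{-\tau})$, and then seek a diffeomorphism $\phi=\mathrm{id}+V$ of $\mathbb R^m\setminus B$ so that $B_{g_e}(\phi^*\tilde g)=0$. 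This is the harmonic map equation from $g_e$ to $\tilde g$, a quasilinear elliptic PDE for $V$ whose linearisation at $V=0$ is the flat vector Laplacian; since $\tilde g$ is already close to flat by BKN, a contraction-mapping argument in weighted H\"older spaces produces $V$, and $\Psi$ is obtained by composing $\tilde{\mathcal L}$ with a suitable $\phi$. The equivalence in (I) between the Bianchi gauge and $\Psi$ being a harmonic map is standard.

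With (I) in force, write $g=g_e+h$. Ricci-flatness in Bianchi gauge reads $\Delta h_{ij}=Q_{ij}(h,\partial h,\partial^2 h)$ for a quadratic-type $Q$, so the asymptotic behaviour of $h$ is governed by homogeneous solutions of the linearised problem, i.e.\ componentwise harmonic symmetric $2$-tensors of homogeneity $-\mu$ in the linearised Bianchi gauge. Separation of variables and the spherical-harmonic decomposition restrict $\mu$ to $\{m-2,m-1,m,\ldots\}$; the rate $\mu=m-2$ is ruled out by the BKN decay $\tau=m-1$, and $\mu=m-1$ is ruled out by showing the corresponding Bianchi tensors are pure gauge, and hence absorbable into $\phi$ (this is the essence of the argument of \cite{cheeger1994, kroencke2022optimal} translated into our framework). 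The first nontrivial rate is $\mu=m$, at which I would verify by direct computation that
\[
	h_{ij}(x)=W_{iklj}\frac{x_k x_l}{\abs{x}^{m+2}}
\]
is symmetric in $(i,j)$ (using $W_{iklj}=W_{ljik}$), traceless (using the tracelessness of $W$), componentwise harmonic (since $H_2(x)/\abs{x}^{m+2}$ is harmonic for any degree-two harmonic polynomial $H_2$), and in Bianchi gauge (via the first Bianchi identity) whenever $W$ has the symmetries of a Weyl tensor. Conversely, an $\mathrm{SO}(m)$-equivariant analysis of the finite-dimensional space of homogeneous harmonic symmetric $2$-tensors of degree $-m$ in Bianchi gauge, modulo residual gauge, identifies this space with the space of algebraic Weyl tensors. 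The error $O(\abs{x}^{-m-1})$ then follows because $Q$ is quadratic and hence of order $\abs{x}^{-2m-2}$ at the leading term, combined with weighted elliptic estimates that allow one to peel off the next homogeneous mode at rate $\mu=m+1$.

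The main obstacle is the algebraic classification at $\mu=m$, generalising Proposition 2.6 of \cite{biquard2023renormalized} from $m=4$ to all $m$. In dimension four, Biquard--Hein exploited the accidental splitting $\Lambda^2=\Lambda^+\oplus\Lambda^-$ and Kronheimer's explicit family of ALE hyperk\"ahler metrics; in general dimension these ingredients must be replaced by a representation-theoretic decomposition of symmetric $2$-tensor valued functions on $S^{m-1}$, and the identification with Weyl tensors must be made by direct computation with the explicit formula above. A secondary difficulty is cleanly eliminating the intermediate rate $\mu=m-1$: one must carefully track the residual gauge freedom left after imposing (I) and verify that every slowly-decaying leading term of this homogeneity is indeed pure gauge, so that the Bianchi-gauge fixing is essentially unique and no spurious term of rate $\abs{x}^{-(m-1)}$ survives in (II).
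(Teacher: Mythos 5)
Your proposal follows essentially the same route as the paper's proof: bootstrap from the Bando--Kasue--Nakajima chart into Bianchi/harmonic gauge, use the gauge constraint together with the harmonicity of the leading tensor to control the low-order modes, show the rate-$(m-1)$ term is pure gauge and hence absorbable by a further coordinate change, and algebraically identify the rate-$m$ mode with a Weyl tensor; the paper carries this out via an interleaved sequence of explicit diffeomorphisms and Schauder fixed-point gauge-fixings, and the algebraic classification you flag as ``the main obstacle'' is exactly what Section~\ref{sec:alg} (the operators $\mathbf{B}_1,\mathbf{B}_2,\Psi_3,\Psi_4$, Propositions~\ref{prop:psi3},~\ref{prop:decomp2}, Lemma~\ref{lem:findB}) supplies. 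One conceptually important correction to your attribution: Cheeger--Tian and Kr\"oncke--Szab\'o do \emph{not} show the rate-$(m-1)$ term is pure gauge --- they show it vanishes under the hypothesis $\Gamma\ne\{1\}$ --- whereas the ``pure gauge'' mechanism you describe, which lets one drop that hypothesis entirely, is precisely the paper's new contribution at this step, encoded in the algebraic identity $\ker\mathbf{B}_1=\mathrm{Im}\,\Psi_3$.
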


\begin{rem}
	Let $\Gamma$ be the fundamental group of $\Omega\setminus K$. In previously mentioned results such as \cite{cheeger1994}, \cite{kroencke2022optimal}, it is necessary to assume $\Gamma\ne \set{1}$. This is not needed for Theorem \ref{thm:main1}. The key point is that instead of focusing on one coordinate, we use an intertwined process of finding new coordinates and improving the regularity of the metric. Indeed, in the rest of the paper, we always assume that $\Omega\setminus K$ is simply connected. In case $\Gamma\ne \set{1}$, it is easy to pass to the universal cover and work with $\Gamma$-equivariant spaces in the proofs.
\end{rem}

\begin{rem}
	There is a harmonic coordinate version of Theorem \ref{thm:main1}, with (I) replaced by: (I') the components of $\Psi$ are harmonic functions. This could be  proved similarly. Instead of repeating, we will indicate at various places how the proofs can be modified for this purpose.
\end{rem}

Given Theorem \ref{thm:main1}, it is natural to ask if the Weyl tensor $W$ is independent of the choice of coordinate. Or equivalently, is it an intrinsic geometric property of an end of manifold satisfying the assumptions of Theorem \ref{thm:main1}? Theorem \ref{thm:main2} provides a positive answer.

\begin{thm}
	\label{thm:main2} Let $(M,g)$ and $\Omega$ be as in Theorem \ref{thm:main1}. Assume that there are two coordinate systems $(x_k)$ and $(y_k)$ defined on (the universal cover of) $\Omega\setminus K$ such that in each of these coordinate systems, we have Weyl-type tensors $W$ and $\tilde{W}$ respectively satisfying  
	\[
		g= (\delta_{ij}+W_{iabj}\frac{x_ax_b}{\abs{x}^{m+2}}+O(\abs{x}^{-(m+1)})) dx_i\otimes dx_j	
	\]
	and
	\[
		g= (\delta_{ij}+\tilde{W}_{iabj}\frac{y_ay_b}{\abs{y}^{m+2}}+O(\abs{y}^{-(m+1)})) dy_i\otimes dy_j.
	\]
	Then there exists an $A\in O(m)$ such that
	\[
		W_{ijkl}=\tilde{W}_{abcd}A_{ai}A_{bj}A_{ck}A_{dl}.
	\]
\end{thm}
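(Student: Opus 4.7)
The plan is to analyze the transition diffeomorphism $\Phi := y\circ x^{-1}$ between the two coordinates, reduce to matching the $|x|^{-m}$ coefficients of the metric in one common coordinate, and then invoke a rigidity statement: a Weyl-type leading term cannot be absorbed into the symmetric gradient of a decaying vector field.

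First I would control $\Phi$ at infinity. The identity
\[
g^{(x)}_{ij}(x) = (\partial_i\Phi^a)(\partial_j\Phi^b)\, g^{(y)}_{ab}(\Phi(x))
\]
combined with $g^{(x)}, g^{(y)} = \delta + O(|\cdot|^{-m})$ forces $D\Phi(x)\,D\Phi(x)^T \to I$. Together with the derivative control built into Definition~\ref{defn:O}, a standard integration-along-rays argument gives $D\Phi(x) = A + O(|x|^{-m})$ for some constant $A \in O(m)$, and hence $\Phi(x) = Ax + b + \xi(x)$ with $\xi = O(|x|^{1-m})$ and $\partial\xi = O(|x|^{-m})$. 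Postcomposing $y$ with $A^{-1}$ replaces $\tilde W$ by its orthogonal image under $A$ --- exactly the identification we want --- and reduces us to $A = I$. The translation $b$ only perturbs the expansion at order $|x|^{-m-1}$, since $(x+b)_a(x+b)_b/|x+b|^{m+2} = x_a x_b/|x|^{m+2} + O(|x|^{-m-1})$, and can be absorbed into $\xi$. So it suffices to prove $W = \tilde W$ under $\Phi(x) = x + \xi(x)$.

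Substituting $y = x + \xi$ into the pullback formula and retaining terms of size $|x|^{-m}$ yields
\[
W_{iabj}\frac{x_a x_b}{|x|^{m+2}} = \tilde W_{iabj}\frac{x_a x_b}{|x|^{m+2}} + \partial_i\xi_j + \partial_j\xi_i + O(|x|^{-m-1}).
\]
Setting $S := W - \tilde W$ (which still has all the Weyl-tensor symmetries) and $h_{ij}(x) := S_{iabj} x_a x_b/|x|^{m+2}$, the question becomes: if $h_{ij} = \partial_i\xi_j + \partial_j\xi_i + O(|x|^{-m-1})$ for some $\xi$ with the above decay, must $S = 0$?

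The rigidity step is the main obstacle. The Weyl symmetries of $S$ imply that $h$ is trace-free and divergence-free on $\Real^m\setminus\{0\}$: tracelessness reduces to $\sum_\alpha W_{\alpha\beta\alpha\delta} = 0$ after using antisymmetry in the last pair, and $\partial^j h_{ij} = 0$ because the surviving contribution $S_{iabj} x_a x_b x_j$ vanishes by antisymmetry of $S$ in $(b,j)$. Taking the trace and divergence of the identity for $h$ then gives $\operatorname{div}\xi = O(|x|^{-m-1})$ and $\Delta \xi = O(|x|^{-m-2})$; by standard asymptotic expansion for functions decaying like $|x|^{1-m}$ and almost harmonic, $\xi$ has a leading part of the form $\xi^j = c_{jk} x_k/|x|^m + o(|x|^{1-m})$ with $c$ antisymmetric (the symmetric part of $c$ being ruled out by the divergence condition). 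A direct computation of $\partial_i\xi_j + \partial_j\xi_i$ for such $\xi$ produces the special shape $-m(c_{jk} x_i x_k + c_{ik} x_j x_k)/|x|^{m+2}$, and equating with $h_{ij}$ yields an algebraic identity between a Weyl-symmetric quadratic form in $x$ and a $(c\otimes\delta)$-type one. Projecting onto the Weyl-symmetric subspace (using the first Bianchi identity and the vanishing of the Ricci trace) forces $S = 0$. This transversality between Weyl-type leading terms and pure-gauge symmetric gradients is the higher-dimensional analogue of Proposition~2.6 of \cite{biquard2023renormalized}; carrying out the algebra uniformly in $m$ is where I expect the bulk of the technical work.
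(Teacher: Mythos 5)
Your plan follows the same two-stage route as the paper's proof in Section~5.3: first expand the transition map, reducing (after a rotation and a translation) to $\Phi(x)=x+\xi(x)$ with leading profile $\xi^j = c_{jk}x_k/|x|^m$; then compare the $|x|^{-m}$ coefficients of the metric in the two coordinates and invoke a transversality statement in $S^2(\Real^m)\otimes S^2_0(\Real^m)$ to force $W=\tilde W$. The algebraic transversality you describe is precisely the content of Lemma~\ref{lem:findB} and Proposition~\ref{prop:decomp2}.

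Two points deserve more care. First, the claim $D\Phi = A + O(|x|^{-m})$ does not follow from the pullback identity alone: that identity controls only the symmetric part of $A^{-1}D\Phi - I$ (via $D\Phi^T g^{(y)}(\Phi)D\Phi=g^{(x)}$), while the rotational, antisymmetric part requires decay of $\partial^2\Phi$. That decay is supplied by the PDE satisfied by the isometry (the vanishing of the second fundamental form, or its trace, the harmonic-map equation) together with the Christoffel estimates; this is exactly the bootstrap the paper carries out through Lemma~\ref{lem:local}, Corollary~\ref{cor:use}, and the weighted Poisson theory of Theorem~\ref{thm:linear}. Your ``integration along rays'' phrase conceals the same PDE work, so it should not be presented as automatic.

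Second, the parenthetical claim that the divergence condition forces $c$ to be antisymmetric is not quite right. For $\xi^j=c_{jk}x_k/|x|^m$ one computes
\[
\operatorname{div}\xi \;=\; \frac{\operatorname{tr}(c)}{|x|^m} - m\,\frac{c_{jk}x_jx_k}{|x|^{m+2}} \;=\; -m\,\frac{c^0_{jk}x_jx_k}{|x|^{m+2}},
\]
where $c^0$ is the \emph{traceless} symmetric part of $c$; the contribution from $\operatorname{tr}(c)$ cancels, so only $c^0$ is killed and $\operatorname{tr}(c)$ survives. Consequently $\partial_i\xi_j+\partial_j\xi_i$ contains, in addition to the ``special shape'' term you wrote, a contribution proportional to $\delta_{ij}/|x|^m - x_ix_j/|x|^{m+2}$ when $\operatorname{tr}(c)\ne 0$; in the paper's notation this lands in $Z_4$. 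The overall argument is not broken, since the full transversality $\widetilde{\mathcal W}\cap(Z_3\oplus Z_4\oplus Z_5)=\set{0}$ from Lemma~\ref{lem:fulldecom} disposes of the trace term as well, but the intermediate formula should be corrected, or one should simply work with a general $B\in(\Real^m)^{\otimes 2}$ as in Lemma~\ref{lem:findB} rather than restricting to antisymmetric $c$ a priori.
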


\begin{rem}
	In Definition \ref{defn:O}, we have assumed a bound for derivatives of any order. This is not necessary for the validity of Theorem \ref{thm:main2}. We have chosen to keep the exposition simple.
\end{rem}

The coordinate in Theorem \ref{thm:main1} in spite of possessing good properties is not unique (up to rotation). Hence, it is natural to ask the following question (with the exponential map and the normal coordinate in mind): is it possible to define a better coordinate that is unique (up to rotation) and whose metric expansion contains more geometric information?

We then move on to discuss the renormalized volume. On noncompact manifolds, the total volume is often infinity. However, under reasonable assumptions on the geometry of the manifolds, it is often useful to compare this infinity (more precisely the growth rate) with some standard example and the difference is usually called the renormalized volume. It was first defined for asymptotically hyperbolic Einstein metrics by Graham \cite{graham}. Later, Anderson \cite{anderson} found an explicit formula for the renormalized volume in this case. Brendle and Chodosh \cite{brendle} defined a renormalized volume in a different setting for asymptotically hyperbolic metrics. For the asymptotically Euclidean (or flat) case, the renormalized volume was defined by Biquard and Hein \cite{biquard2023renormalized} for ALE Ricci-flat four-manifolds/orbifolds. In this case, they proved that the renormalized volume is non-positive and established the rigidity for the equal case. 


In this paper, using the coordinates above, we generalize the definitions of Biquard and Hein to higher dimensions. First, note that Theorems \ref{thm:main1} and \ref{thm:main2} concern the geometry of an end. Therefore, we may extend their validity to spaces that are orbifolds rather than manifolds, provided that the end itself satisfies the assumptions. Specifically, we allow $M$ to be an orbifold with finitely many isolated singularities. At each singularity, a neighborhood is homeomorphic to the quotient of a ball in $\mathbb R^m$ by a finite subgroup of $SO(m)$ — a type of orbifold that arises naturally in the compactness theory of Einstein manifolds (see \cite{anderson, bando1990bubbling}). Under this extension, both Theorem \ref{thm:main1} and Theorem \ref{thm:main2} remain valid. Let $x$ denote the coordinate defined on $\mathbb R^m \setminus B$ as given in Theorem \ref{thm:main1}.
Set
$$
B_r=M \setminus \set{\abs{x}>r} \quad \text{and} \quad \tilde{B}_r = \set{x\in \Real^m | \, \abs{x}<r}/\Gamma.
$$
The renormalized volume $\mathcal V$ is defined to be the limit
\begin{equation}
	\label{eqn:v}
	\lim_{r\to \infty } V_g(B_r)- V_{g_e}(\tilde{B}_r).
\end{equation}
Following the discussions of \cite{biquard2023renormalized}, we have
\begin{thm}
	\label{thm:volume}
	Let $(M,g)$ be $m$-dimensional ALE Ricci-flat manifold/orbifold such that the assumptions of Theorem \ref{thm:main1} hold for its end $\Omega$.  Using the above notations, the limit in \eqref{eqn:v} exists and is independent of the choice of the coordinate that satisfies the assumptions of Theorem \ref{thm:main2}. Moreover, $\mathcal V\leq 0$. The equality holds if and only if $(M,g)$ is $\Real^m/\Gamma$.
\end{thm}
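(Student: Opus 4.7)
My plan is to prove the four assertions of Theorem \ref{thm:volume} --- existence of the limit, coordinate independence, the inequality $\mathcal V\le 0$, and rigidity --- by adapting the dimension-four strategy of \cite{biquard2023renormalized} to arbitrary $m$, using the intrinsic control provided by Theorems \ref{thm:main1} and \ref{thm:main2}. Throughout, $g_e$ denotes the flat metric in whichever coordinate chart is in use and $\omega_m$ the volume of the Euclidean unit ball.

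For existence of the limit, I write $g = g_e + H$ on the end with $H_{ij} = W_{iklj}\frac{x_k x_l}{\abs{x}^{m+2}} + O(\abs{x}^{-m-1})$ via \eqref{eqn:expansion}. The key observation is the Weyl trace identity $W_{ikli} = 0$ (summed over $i$), which forces the $O(\abs{x}^{-m})$ leading term of $H$ to be $g_e$-trace-free. Combined with the expansion
\[
	\sqrt{\det g} = 1 + \tfrac{1}{2}{\rm tr}_{g_e} H - \tfrac{1}{4}\abs{H}_{g_e}^2 + \tfrac{1}{8}({\rm tr}_{g_e} H)^2 + O(\abs{H}^3),
\]
this gives $\sqrt{\det g} - 1 = O(\abs{x}^{-m-1})$. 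Decomposing $B_r = B_{R_0}\cup A_{R_0,r}$, where $A_{R_0,r}$ is coordinate-diffeomorphic to $\{R_0 < \abs{x} < r\}/\Gamma$, one obtains
\[
	V_g(B_r) - V_{g_e}(\tilde B_r) = V_g(B_{R_0}) - \frac{\omega_m R_0^m}{\abs{\Gamma}} + \frac{1}{\abs{\Gamma}}\int_{R_0 < \abs{x} < r}(\sqrt{\det g} - 1)\,dx,
\]
and the last integral converges as $r\to\infty$ since $\int^\infty r^{-m-1}\cdot r^{m-1}\,dr<\infty$.

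For coordinate independence, I would extract from the proof of Theorem \ref{thm:main2} a transition of the form $y = Ax + b + \psi(x)$ with $A\in O(m)$, $b\in\Real^m$, and $\psi$ decaying at a sufficient polynomial rate. The cancellation $V_{g_e}(\tilde B_r^{(x)}) = V_{g_e}(\tilde B_r^{(y)}) = \omega_m r^m/\abs{\Gamma}$ reduces the question to showing that $V_g(B_r^{(x)}) - V_g(B_r^{(y)})\to 0$. Using the intrinsic identity $\sqrt{\det g^{(x)}}\,dx = \sqrt{\det g^{(y)}}\,dy$ to convert between the two volume integrals, and the decay of $\psi$ to control the contribution of the thin shell $B_r^{(x)}\triangle B_r^{(y)}$ on which $\sqrt{\det g} = 1 + O(\abs{x}^{-m-1})$, this limit indeed vanishes.

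The main technical obstacle is the combination of $\mathcal V\le 0$ and rigidity. My plan is to use the Ricci-flat condition together with the Bianchi gauge of Theorem \ref{thm:main1} to rewrite the integrand $\sqrt{\det g} - 1$, up to negligible higher-order error, as the sum of a divergence and a manifestly nonpositive curvature density of the form $-c\abs{W_g}_g^2$. Integration by parts on the end then produces two boundary contributions: one at $\abs{x} = R_0$ arranged to absorb the compact-core term $V_g(B_{R_0}) - \omega_m R_0^m/\abs{\Gamma}$, and one at $\abs{x} = \infty$ that vanishes by \eqref{eqn:expansion}. What remains is a nonpositive curvature integral on $(M,g)$, giving $\mathcal V \le 0$. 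In the equality case this integrand vanishes identically, which together with Ricci-flatness forces $Rm_g \equiv 0$ on $M$; completeness and the prescribed ALE asymptotics then identify $(M,g)$ with $\Real^m/\Gamma$. Pinning down the precise higher-dimensional integration-by-parts identity --- the replacement for the Pfaffian-based identity used by Biquard--Hein in dimension four --- is the principal calculation and will require a careful analysis based on the full expansion of the metric beyond the leading Weyl term.
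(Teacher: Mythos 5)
Your treatment of the first two assertions (existence of the limit and coordinate independence) follows essentially the same route as the paper: both exploit the tracelessness of the Weyl leading term to get $\sqrt{\det g}-1 = O(\abs{x}^{-m-1})$, and both reduce coordinate independence to a shell estimate using Corollary~\ref{cor:apply}. Those parts are fine.

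The gap is in the non-positivity and rigidity arguments. You propose to rewrite $\sqrt{\det g}-1$ (up to error) as a divergence plus a manifestly nonpositive density of the form $-c\abs{W_g}^2$, and you explicitly flag this ``replacement for the Pfaffian-based identity used by Biquard--Hein in dimension four'' as the principal open calculation. That is exactly where the proposal breaks down: the clean Pfaffian factorization that makes $8\pi^2\chi(M)=\int_M\abs{W}^2$ available for Ricci-flat four-manifolds is a dimension-four accident. In general dimension $m$ there is no analogous identity expressing the volume defect as a divergence plus a sign-definite curvature density, and the paper does not attempt one. Instead, the paper takes the alternative suggested in Remark~3.1 of \cite{biquard2023renormalized}: it applies the Heintze--Karcher-type inequality of Ros \cite{ros1987},
\[
\int_{\partial B_r}\frac{1}{H_g}\,dA_g \;\geq\; m\,V_g(B_r),
\]
valid whenever $\operatorname{Ric}\geq 0$, and then verifies by a careful expansion of $n_g$, $dA_g$ and $H_g$ in the Bianchi coordinate that
\[
\lim_{r\to\infty}\Bigl(\int_{\partial\tilde B_r}\frac{1}{H_e}\,dA_e - \int_{\partial B_r}\frac{1}{H_g}\,dA_g\Bigr)=0,
\]
from which $\mathcal V\leq 0$ is immediate (since the flat side is exactly $mV_{g_e}(\tilde B_r)$). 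Rigidity is then extracted from the equality case of Ros's argument: one studies the Dirichlet solutions $\triangle_g u_r=1$ on $B_r$, shows via Reilly's formula that the traceless Hessian defect tends to zero, passes to a limit function $\tilde u_\infty$ with $\nabla^2\tilde u_\infty=\frac1m g$, and concludes that $\nabla\tilde u_\infty$ is a conformal Killing field, forcing $(M,g)\cong\Real^m/\Gamma$. Your rigidity route (vanishing of a hypothetical $\abs{W_g}^2$ density $\Rightarrow$ flatness) would be valid \emph{if} the decomposition existed, but it inherits the same gap. You should replace the speculative curvature-density identity with the Ros inequality and its equality analysis; without that, the last two assertions of the theorem are not proved.
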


In the case of $m=4$, the renormalized volume $\mathcal V$ coincides with the one defined in \cite{biquard2023renormalized}. In \cite{biquard2023renormalized}, the uniqueness of CMC foliation (see \cite{chodosh2017}) is used in order to justify that $\mathcal V$ is an intrinsic geometric quantity. Here, we take a different approach. Our proof relies on the fact that any two coordinates satisfying the assumptions of Theorem \ref{thm:main2} are the same up to high order.

\subsection{Idea of the proofs}

To illustrate the idea used in the proof of Theorem \ref{thm:main1}, we start from the well-known expansion of any Riemannian metric $g_{ij}$ in its normal coordinate system,
\[
	g_{ij}=\delta_{ij}-\frac{1}{3}R_{iabj}x_ax_b + O(\abs{x}^3).
\]
This formula is discussed in many textbooks on Riemannian geometry. The popular proof uses Jacobi fields. There is another direct proof due to Riemann (as explained by Spivak in \cite{spivak1979}, see Volume 2, Chapter 4). There is also an interesting and informative discussion of the history of this formula available at StackExchange \cite{web}. Among the answers posted there, there is one due to Bryant that is very relevant to the proofs in this paper, which we summarize briefly as follows.

Since $g_{ij}$ is smooth, one starts with the Taylor expansion
\begin{equation}
	\label{eqn:temp}
	g_{ij}=a_{ij}+b_{ij,k}x_k+ c_{ij,kl}x_kx_l + \cdots 
\end{equation}
It is easy to see that we can use a linear coordinate change to make $a_{ij}$ equal to $\delta_{ij}$, in which sense \eqref{eqn:temp} is simplified. The idea is that we may continue to do so and look for better and better coordinate in which $g_{ij}$ becomes as "simple" as possible. By considering a coordinate change
\[
	\varphi(x)_i = x_i + A_{jk,i}x_jx_k,
\]
we can eliminate the linear term $b_{ij,k}x_k$ if $A_{jk,i}$ is chosen correctly (by solving some linear equations). Finally, one may consider one more coordinate change
\[
	\varphi(x)_i=x_i + B_{jkl,i}x_jx_kx_l
\]
to reduce the $c_{ij,kl}$ to a curvature tensor. To the best of our knowledge, we do not know a complete proof in the literature. Hence, we provide the full details of this proof in Section \ref{sec:smooth} with the linear algebra formulas proved in Section \ref{sec:alg}.

A part of Theorem \ref{thm:main1} amounts to an adaptation of the above argument to the infinity of an asymptotically flat  Ricci-flat metric. We take an intertwined argument of algebra (to eliminate/simplify the coefficients in the expansion) and analysis (to produce the next nontrivial leading term in the expansion using Einstein equation). More precisely, we use \cite{bando1989on} as our starting point (hence our argument is then independent of later development), i.e. a harmonic coordinate at infinity in which (by elliptic regularity of the Einstein equation) the metric is
\[
	g_{ij}=\delta_{ij}+A_{ijk}\frac{x_k}{\abs{x}^m} + O(\abs{x}^{-m}).
\]
Some algebraic lemma allows us to find a new coordinate in which $A_{ijk}\equiv 0$. This is a feature of the present argument in the sense that we do not assume $\Gamma\ne \set{1}$ to show the vanishing of $A_{ijk}$, instead, we find a new coordinate. Then, we solve the harmonic map equation to turn the new coordinate into a Bianchi coordinate while keeping the expansion. Then elliptic regularity of the Einstein equation again gives an expansion
\[
	g_{ij}=\delta_{ij}+ A_{ijkl}\frac{x_kx_l}{\abs{x}^{m+2}}+ O(\abs{x}^{-m-1}).
\]
Another algebraic argument implies the existence of a new coordinate in which this $A_{ijkl}$ is reduced to a Weyl tensor. Finally, we solve the harmonic map equation again to find the coordinate claimed in Theorem \ref{thm:main1}.

Now, we explain the proof of Theorem \ref{thm:main2}. The identity map is certainly a harmonic map. If we use coordinate $x$ in the domain and coordinate $y$ in the target, the identity map is given by $\varphi(x)=y$ for some smooth function $\varphi$. Given that $g_{ij}$ is approximately $\delta_{ij}$ in both $x$-coordinate and $y$-coordinate, it is obvious that when $x$ goes to infinity, $\varphi$ gets closer and closer to an orthogonal transformation plus a translation. The key to the proof is to develop the regularity of $\varphi$ using the harmonic map equation and the assumed expansion of metric tensor in both coordinates. For simplicity, if we assume the orthogonal transformation and the translation at the infinity is trivial, the regularity analysis yields
\[
	\varphi(x)_i=x_i + B_{ij}\frac{x_j}{\abs{x}^m}+ \cdots 
\]
for some $2$-tensor $B\in (\Real^m)^{\otimes 2}$.
Formal computation shows that it is this $2$-tensor $B_{ij}$ that is responsible for transforming $W$ to $\tilde{W}$. However, some linear algebra result (see Lemma \ref{lem:findB}) implies that $W=\tilde{W}$ as claimed by the theorem.

Given the existence and the ``almost" uniqueness of a good coordinate at the infinity, it is natural to use it to define various geometric quantities. Besides the Weyl tensor given in Theorem \ref{thm:main2}, we follow Biquard and Hein \cite{biquard2023renormalized} for the definition of the renormalized volume \eqref{eqn:v}. Our uniqueness result implies that it is an intrinsic geometric quantity. The rest of Theorem \ref{thm:volume} is proved using a result of \cite{ros1987} (as suggested by Remark 3.1 of \cite{biquard2023renormalized}).

The rest of the paper is organized as follows. In Section \ref{sec:alg}, we discuss some linear algebra results needed later. In Section \ref{sec:analysis}, we prove a theorem about the existence of Bianchi coordinate. In Section \ref{sec:smooth}, we give precise details to the local result outlined by Bryant that serves as a precursor to Theorem \ref{thm:main1}, and strengthen it a little by asking the coordinate to be Bianchi. The result in this section is not logically necessary for the proof of the main theorems in this paper. We simply hope they are interesting in their own right. In Section \ref{sec:infinity}, we prove the two main theorems. Finally, in Section \ref{sec:application}, we discuss the concept of renormalized volume and show that in the special case of dimension four, it agrees with what is known in \cite{biquard2023renormalized}.

{\bf Acknowledgments.} Bing Wang is partially supported by YSBR-001, NSFC-12431003, and a research fund from USTC. Hao Yin is partially supported by NSFC-12431003. The authors would like to thank the anonymous referees and Professor Li Yu for their valuable comments, which helped improve the manuscript.

\section{Some algebraic facts}
\label{sec:alg}

In this section, we define some maps between tensor spaces and discuss a few properties that are useful to the proof of our main results. The maps defined in Section \ref{subsec:maps} and their properties may look strange at first, but they will be shown to be useful later.

\subsection{Notations}

To start, we fix notations. Throughout this paper, we are interested in various tensors on $\Real^m$. With the standard inner product of $\Real^m$ in mind, we make no distinction between covariant and contravariant tensors. If $A$ is a $4$-tensor, its components are denoted by $A_{i_1i_2i_3i_4}$. For simplicity, we write $A_{(1234)}$ for $A_{i_1i_2i_3i_4}$. We denote by $[A_{(1234)}]$ the tensor whose components are $A_{(1234)}$. Please note that using this notation, $[A_{(2134)}]$ is different from $[A_{(1234)}]$. The same convention applies to $2$-tensors and $3$-tensors.

We use $\delta=[\delta_{(12)}]$ for the Kronecker delta. We also assume the summation convention that repeated indices are summed over. We use ${\rm Tr}_{(13)}$ for the trace over the first and the third indices of a tensor. For example, if $A$ is a $4$-tensor, by taking ${\rm Tr}_{(13)}$, we obtain a $2$-tensor.
\[
	{\rm Tr}_{(13)}([A_{(1234)}])=[A_{(a1a2)}].
\]
We also use $S_{(12)}$ for the symmetrization of the first and second indices of a tensor. More precisely,
\[
	S_{(12)}([A_{(123)}])= \frac{1}{2}[A_{(123)}+A_{(213)}].
\]
$S_{(234)}$ is understood similarly.

$S^k(\Real^m)$ is the space of symmetric $k$-tensors. $S^2_0(\Real^m)$ is the traceless symmetric $2$-tensors and $\Lambda^2(\Real^m)$ is the skew-symmetric $2$-tensors. These spaces arise naturally in our paper. For example, when we consider the Taylor expansion of a metric $g_{ij}$, the linear term is $a_{ijk}x_k$ where $[a_{(123)}]$ is a tensor in $S^2(\Real^m)\otimes \Real^m$. 

Finally, $\mathcal C$ is the space of curvature tensors and $\mathcal W$ is the space of Weyl tensors. We define a map $s$ on $\mathcal C$ by
\[
	s([R_{(1234)}])=\frac{1}{2} ([R_{(1342)}+ R_{(1432)}])\in S^2(\Real^m)\otimes S^2(\Real^m) \qquad \text{} \quad \forall R\in \mathcal C.
\]
It is well known that $s$ is injective on $\mathcal C$. We denote $s(\mathcal C)$ by $\widetilde{\mathcal C}$ and $s(\mathcal W)$ by $\widetilde{\mathcal W}$.

\begin{rem}
	The map $s$ is related to the so-called curvature operator of the second kind, which we refer to Section 2.3 of \cite{bourguignon1978} for its definition. 
\end{rem}

\begin{rem}
	\label{rem:curv_sym} As a corollary of the first Bianchi identity, for any tensor $A$ in $\widetilde{\mathcal C}$, we have
	\[
		A_{(1234)}+A_{(1342)}+A_{(1423)}=0.
	\]
\end{rem}

\subsection{Maps between tensors}
\label{subsec:maps}

Using these notations, we write down a few important maps between tensors that will be used in later proofs.\\

\begin{enumerate}
	\item ${\mathbf R}: S^2(\Real^m) \otimes S^2(\Real^m)\to \mathcal C$ defined by
		\begin{equation}
			\label{eqn:R}
			{\mathbf R}(A)= [ A_{(2413)}-A_{(2314)}-A_{(1423)}+A_{(1324)}].
		\end{equation}
		For the meaning of this map, we notice that if the metric is $g_{(12)}=\delta_{(12)}+A_{(1234)}x_{(3)}x_{(4)}$ for $A\in S^2(\Real^m) \otimes S^2(\Real^m)$, then the curvature tensor at the origin is ${\mathbf R}(A)$.
	\item $\tilde{\mathbf R}: S^2(\Real^m) \otimes S^2(\Real^m)\to \widetilde{\mathcal C}$ defined by
		\[
			\tilde{\mathbf R}(A)=  -\frac{1}{6} [{\mathbf R}(A)_{(1342)}+{\mathbf R}(A)_{(1432)}].
		\]
		Given $g$ as above, this is the tensor that appears as the second order term in the expansion of $g$ in the normal coordinate system. 

	\item $\Psi_1: S^2(\Real^m)\otimes \Real^m\to S^2(\Real^m)\otimes \Real^m$ defined by
		\[
			\Psi_1(B)=-2 [B_{(312)}+B_{(321)}].
		\]
		This is related to the coordinate change of the linear term in the expansion of $g_{ij}$. 
	\item $\Psi_2: \Real^m \otimes S^3(\Real^m)\to S^2(\Real^m)\otimes S^2(\Real^m)$ defined by
		\begin{equation*}
			\Psi_2(B)= -3[B_{(1234)}+B_{(2134)}].
		\end{equation*}
		This is related to the coordinate change of the quadratic term in the expansion of $g_{ij}$. 
	\item $\Psi_3: \Real^m\to S^2(\Real^m)\otimes \Real^m$ defined by
		\[
			\Psi_3(B)=-(m-2)[ \delta_{(13)}B_{(2)}+\delta_{(23)}B_{(1)}].
		\]
		This is related to the coordinate change of the $-(m-1)$ order term in the expansion of an asymptotically flat metric at infinity.
	\item $\Psi_4: (\Real^m)^{\otimes 2}\to S^2(\Real^m)\otimes S^2(\Real^m)$ defined by
		\[
			\Psi_4(B)= [B_{(12)}\delta_{(34)}+B_{(21)}\delta_{(34)}- \frac{m}{2}(B_{(13)}\delta_{(24)}+B_{(14)}\delta_{(23)}+B_{(23)}\delta_{(14)}+B_{(24)}\delta_{(13)})].
		\]
		This is related to the coordinate change of the $-m$ order term in the expansion of an asymptotically flat metric at infinity.

	\item The Bianchi operator acting formally on the $-(m-1)$ order term of the metric expansion motivates
		\[
			{\mathbf B}_1: S^2(\Real^m)\otimes \Real^m \to S^2(\Real^m)  \otimes \Real^m
		\]
		\begin{equation*}
			{\mathbf B}_1 (A)= [2A_{(3aa)}\delta_{(12)} - m(A_{(312)}+A_{(321)}) - A_{(aa3)}\delta_{(12)}+\frac{m}{2}(A_{(aa2)}\delta_{(13)}+A_{(aa1)}\delta_{(23)})].
		\end{equation*}
	\item The Bianchi operator acting formally on the $-m$ order term of the metric expansion motivates
		\[
			{\mathbf B}_2: S^2(\Real^m)\otimes S^2(\Real^m)\to \Real^m \otimes S^3(\Real^m)
		\]
		\[
			{\mathbf B}_2(A)= S_{(234)}[4A_{(a1a4)}\delta_{(23)}-2(m+2)A_{(3124)}-2A_{(aa14)}\delta_{(23)}+(m+2)A_{(aa24)}\delta_{(13)}].
		\]
\end{enumerate}

\subsection{Decomposition of tensor spaces}

In this subsection, we collect a few properties for the maps defined in Section \ref{subsec:maps}. Those maps can be regarded as morphisms of representations of $O(m)$. Hence, they may be understood via a decomposition of the domains and the images into irreducible representations. However, we refrain from using any abstract theory, instead we prove only results that are needed in later proofs using elementary and explicit methods.

\begin{prop}
	\label{prop:psi1}
	The map $\Psi_1: S^2(\Real^m)\otimes \Real^m\to S^2(\Real^m)\otimes \Real^m$ is an isomorphism.
\end{prop}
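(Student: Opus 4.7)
The plan is to prove injectivity of $\Psi_1$; since the domain and codomain are the same finite-dimensional space, this is equivalent to $\Psi_1$ being an isomorphism. So I would suppose $\Psi_1(B)=0$, which unfolds to the identity
\[
    B_{kij}+B_{kji}=0 \qquad \text{for all } i,j,k,
\]
and try to combine this with the built-in symmetry $B_{abc}=B_{bac}$ (coming from $B\in S^2(\Real^m)\otimes\Real^m$) to force $B\equiv 0$.

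The key point is that these two symmetries, one in the first two slots and one anti-symmetry in the last two slots, are incompatible on a nonzero tensor. Concretely, I would perform a short cyclic manipulation: starting from $B_{abc}$, alternately apply the first-two-index symmetry $B_{abc}=B_{bac}$ and the derived last-two-index antisymmetry $B_{abc}=-B_{acb}$. After a cycle of six such swaps, the index configuration returns to $(a,b,c)$ but with an odd number of sign flips, yielding $B_{abc}=-B_{abc}$, hence $B=0$.

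This gives $\ker\Psi_1=\{0\}$, and the equality of source and target dimensions finishes the proof. The only potential subtlety is bookkeeping the swap sequence carefully so that all signs and index permutations are correct, but there is no real obstacle since the argument is purely combinatorial on three indices.
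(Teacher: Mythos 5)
Your proof is correct and is essentially identical to the paper's: both unwind $\Psi_1(B)=0$ into antisymmetry of $B$ in its last two indices, then run the same six-step alternating swap (symmetry in slots $1,2$, antisymmetry in slots $2,3$) to conclude $B_{abc}=-B_{abc}$, hence $B=0$, and finish by equality of dimensions.
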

\begin{proof}
	$\Psi_1$ is a linear map between vector spaces of the same dimension. Hence it suffices to show that $\Psi_1$ is injective. Namely, for any $[B_{(123)}]\in S^2(\Real^m)\otimes \Real^m$ satisfying 
	\[
		B_{(312)}= -B_{(321)},
	\]
	we need to show that $[B_{(123)}]=0$. In fact, since $B$ is symmetric in the first two indices and skew-symmetric in the last two indices, we obtain
	\begin{eqnarray*}
		B_{(123)} &=& B_{(213)} = -B_{(231)}\\
			  &=& -B_{(321)} = B_{(312)} = B_{(132)} \\
			  &=& -B_{(123)},
	\end{eqnarray*}
	which implies that $B=0$.
\end{proof}

\begin{prop}
	\label{prop:psi2}
	There is the direct sum decomposition
	\[
		S^2(\Real^m)\otimes S^2(\Real^m) = \widetilde{\mathcal C} \oplus  \Psi_2(\Real^m \otimes S^3(\Real^m)).
	\]
	More precisely, for any $A\in S^2(\Real^m) \otimes S^2(\Real^m)$,
	\[
		A= \tilde{\mathbf R}(A)+ \Psi_2(B)
	\]
	where $B\in \Real^m \otimes S^3(\Real^m)$ is defined by
	\begin{equation}
		\label{eqn:B}
		B= -\frac{1}{18}[2A_{(1234)}+2A_{(1324)}+2A_{(1423)}-A_{(3412)}-A_{(2314)}-A_{(2413)}].
	\end{equation}
\end{prop}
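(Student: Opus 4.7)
The plan is to establish the proposition by verifying three ingredients. First, that $\tilde{\mathbf R}$ restricted to $\widetilde{\mathcal C}$ acts as the identity. Second, that $\tilde{\mathbf R}\circ \Psi_2=0$. Third, the explicit identity $A=\tilde{\mathbf R}(A)+\Psi_2(B)$ with $B$ given by \eqref{eqn:B}. Taken together, the third yields existence of the splitting, while the first two force $\widetilde{\mathcal C}\cap \Psi_2(\Real^m\otimes S^3(\Real^m))=\{0\}$, so that the sum is direct and the decomposition of any $A$ is unique (obtained by applying $\tilde{\mathbf R}$).

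For the first ingredient, I would take $A=s(R)$ for some $R\in \mathcal C$, so that $A_{(1234)}=\tfrac{1}{2}(R_{(1342)}+R_{(1432)})$, and plug this into the definition of $\mathbf R$. Using the full symmetries of $R$ (pair-swap, antisymmetry in each pair) together with the first Bianchi identity, the expression $\mathbf R(A)_{(1342)}+\mathbf R(A)_{(1432)}$ should reduce to $-3(R_{(1342)}+R_{(1432)})$, giving $\tilde{\mathbf R}(A)=A$. Equivalently, one verifies the cleaner intermediate identity $\mathbf R\circ s=-3\,\mathrm{id}$ on $\mathcal C$, which combined with $\tilde{\mathbf R}=-\tfrac{1}{3}\,s\circ \mathbf R$ gives the conclusion. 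Remark \ref{rem:curv_sym} ensures that Bianchi-type relations are available directly for tensors in $\widetilde{\mathcal C}$.

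For the second ingredient, the argument is purely combinatorial: substitute $\Psi_2(B)_{(1234)}=-3(B_{(1234)}+B_{(2134)})$ into $\mathbf R$, then into $\tilde{\mathbf R}$, and exploit that $B$ is totally symmetric in positions $2,3,4$. The cyclic, Bianchi-like symmetrization that $s$ implements then annihilates every resulting term, since any permutation that rearranges only those symmetric positions leaves $B$ unchanged while the alternating signs in $\mathbf R$ force cancellation.

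The main obstacle, and the only step of genuine complexity, is the third ingredient. Expanding $\Psi_2(B)$ with $B$ as in \eqref{eqn:B} produces an explicit combination of six permutations of $A$ (with the prefactor $\tfrac{1}{3}$), and $\tilde{\mathbf R}(A)$ produces another such combination obtained from $\mathbf R(A)$ followed by the symmetrization $s$. The claim is that these contributions combine to give $A_{(1234)}$, using only the $S^2\otimes S^2$ symmetries $A_{(1234)}=A_{(2134)}=A_{(1243)}$. I expect this to be a bookkeeping exercise rather than a conceptual hurdle: the specific coefficient $-\tfrac{1}{18}$ and the particular six-term sum in \eqref{eqn:B} are engineered exactly so that every permutation of $A$ different from $A_{(1234)}$ appears with net coefficient zero after combining the two contributions, while $A_{(1234)}$ itself appears with coefficient one.
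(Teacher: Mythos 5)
Your proposal is correct, and the overall structure matches the paper in one respect --- both verify the explicit identity $A=\tilde{\mathbf R}(A)+\Psi_2(B)$ by expanding both sides --- but you take a genuinely different route to prove that the sum is direct. The paper argues by contradiction: if $C+\Psi_2(B)=0$ with $C\in\widetilde{\mathcal C}$, then $\Psi_2(B)$ inherits the first Bianchi identity from $C$ (Remark \ref{rem:curv_sym}), which translates into $3B_{(1234)}+B_{(2134)}+B_{(3124)}+B_{(4123)}=0$; a clever cyclic symmetrization of this identity then forces $B=0$. You instead show that $\tilde{\mathbf R}$ is the identity on $\widetilde{\mathcal C}$ and kills $\Psi_2(\Real^m\otimes S^3(\Real^m))$, so that $\tilde{\mathbf R}$ is a projection onto $\widetilde{\mathcal C}$ along the image of $\Psi_2$; directness is then immediate. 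Your calculation $\mathbf R\circ s=-3\,\mathrm{id}$ on $\mathcal C$ does hold (after pair-swap symmetries and one application of the first Bianchi identity), and in fact even $\mathbf R\circ\Psi_2=0$ before applying $s$, so your second ingredient is slightly easier than you anticipate. The tradeoff: the paper's symmetrization trick is a single short computation, whereas you need two auxiliary identities on top of the main expansion; but what you buy is the conceptually cleaner statement that $\tilde{\mathbf R}$ is a projection operator, which clarifies the geometric meaning of the decomposition and gives the uniqueness of the splitting for free.
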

\begin{proof}
	The fact that $A= \tilde{\mathbf R}+\Psi_2(B)$ can be proved by straightforward computation. In fact, we have 
	\[
		\tilde{\mathbf R}(A)= -\frac{1}{6}[A_{(2314)}+A_{(2413)}-2A_{(3412)}-2A_{(1234)}+A_{(1432)}+A_{(1324)}]
	\]
	and 
	\[
		\Psi_2(B)=\frac{1}{6}\left[ 4A_{(1234)}-2A_{(3412)}+A_{(1324)}+A_{(1423)}+A_{(2314)}+A_{(2413)} \right],
	\]
	whose sum is exactly $A$.

	To see that this is a direct sum, assume that we have $C\in \widetilde{\mathcal C}$ and $B\in \Real^m \otimes S^3(\Real^m)$ such that
	\[
		C+ \Psi_2(B)=0.
	\]
	By Remark \ref{rem:curv_sym}, $C$ and hence $\Psi_2(B)$ satisfy the first Bianchi identity,
	\[
		(\Psi_2(B))_{(1234)}+(\Psi_2(B))_{(1342)}+(\Psi_2(B))_{(1423)}=0,	
	\]
	which is (using the definition of $\Psi_2$)
	\begin{equation}
		\label{eqn:temp1}
		3B_{(1234)}+ B_{(2134)}+B_{(3124)}+B_{(4123)}=0.
	\end{equation}
	Here in the above computation, we have used the symmetry of $B$ in its last three indices. We do a permutation of the four indices in the above equation and sum them up to see
	\[
		B_{(1234)}+ B_{(2134)}+B_{(3124)}+B_{(4123)}=0,
	\]
	which (together with \eqref{eqn:temp1}) implies that $B=0$. Therefore $C=0$ and the proof of the proposition is done.
\end{proof}

\begin{lem}\label{lem:decom_21}
	We have the following direct sum decomposition
	\[
		S^2(\Real^m)\otimes \Real^m= Y_1 \oplus Y_2\oplus H_{2,1}.
	\]
	Here
	\begin{eqnarray*}
		Y_1&=& \set{[ \delta_{(12)}B_{(3)}] |\, B\in \Real^m}\\
		Y_2&=& \set{[\delta_{(13)}B_{(2)}+\delta_{(23)}B_{(1)}]|\, B\in \Real^m}
	\end{eqnarray*}
	and $H_{2,1}$ is the set of harmonic tensors in $S^2(\Real^m)\otimes \Real^m$, in the sense that trace with respect to any pair of indices vanishes.
\end{lem}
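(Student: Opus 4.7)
The plan is to explicitly construct the projections onto the three summands using trace operations. Given $A = [A_{(123)}] \in S^2(\Real^m)\otimes \Real^m$, the symmetry of $A$ in its first two indices means there are essentially only two independent trace vectors that one can form, namely
\[
  T_1 := {\rm Tr}_{(12)}(A) = [A_{(aa3)}] \in \Real^m, \qquad T_2 := {\rm Tr}_{(13)}(A) = [A_{(a2a)}]={\rm Tr}_{(23)}(A) \in \Real^m.
\]
An element of $H_{2,1}$ is by definition one for which both $T_1$ and $T_2$ vanish. So it suffices to find, for each $A$, a unique pair $(y_1,y_2)\in Y_1\times Y_2$ whose combined traces $(T_1, T_2)$ match those of $A$.

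To this end, I would compute the traces of the model elements. For $[\delta_{(12)}B_{(3)}]\in Y_1$ one finds $T_1=mB$ and $T_2=B$; for $[\delta_{(13)}C_{(2)}+\delta_{(23)}C_{(1)}]\in Y_2$ one finds $T_1=2C$ and $T_2=(m+1)C$. Hence the induced linear map
\[
  (P_1,P_2):Y_1\oplus Y_2 \longrightarrow \Real^m\oplus\Real^m,\qquad (B,C)\mapsto(mB+2C,\ B+(m+1)C)
\]
has block determinant $m(m+1)-2=(m-1)(m+2)$, which is nonzero for $m\ge 2$. Thus for any given pair $(T_1,T_2)$ there exist unique $B,C\in \Real^m$ producing those traces; consequently the tensor
\[
  H := A - [\delta_{(12)}B_{(3)}] - [\delta_{(13)}C_{(2)}+\delta_{(23)}C_{(1)}]
\]
has vanishing $T_1$ and $T_2$, hence lies in $H_{2,1}$. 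This establishes the decomposition $S^2(\Real^m)\otimes \Real^m = Y_1+Y_2+H_{2,1}$.

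For directness, suppose $y_1+y_2+h=0$ with $y_1\in Y_1, y_2\in Y_2, h\in H_{2,1}$. Taking $T_1$ and $T_2$ annihilates $h$ and reduces the equation to $(P_1,P_2)(y_1,y_2)=0$, whose only solution by the invertibility above is $y_1=y_2=0$; then $h=0$ as well. The only mild obstacle is the bookkeeping for the traces of the model elements in $Y_2$ (one must carefully use the symmetry of the first two indices in the definition of $T_2$), but once the $2\times2$ system is set up correctly the rest is immediate linear algebra, valid uniformly for all $m\ge 2$.
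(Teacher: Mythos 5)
Your proposal is correct and follows essentially the same route as the paper: compute the traces $T_{(12)}$ and $T_{(13)}$ of the model elements of $Y_1$ and $Y_2$, observe that the resulting $2\times 2$ matrix (determinant $m(m+1)-2=(m-1)(m+2)$) is invertible for $m\ge 2$, and use this both to extract the $Y_1,Y_2$ components of an arbitrary tensor and to prove that the sum is direct. The only cosmetic difference is that you make the determinant factorization explicit, which the paper leaves implicit in the remark "$m\ne 1,-2$."
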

\begin{proof}
	By definition, $Y_1$ and $Y_2$ are subspaces of $S^2(\Real^m)\otimes \Real^m$.
	For a tensor $A\in S^2(\Real^m)\otimes \Real^m$, recall that $T_{(12)}$ and $T_{(13)}$ are defined by
	\begin{eqnarray*}
		T_{(12)}A &=& [A_{(aa1)}]  \\
		T_{(13)}A &=& [A_{(a1a)}] .
	\end{eqnarray*}
	Given the symmetry of $A$, these are the only independent traces. By computation, we have	
	\begin{eqnarray*}
		T_{(12)}[\delta_{(12)}B_{(3)}] &=& m B \\
		T_{(13)}[\delta_{(12)}B_{(3)}] &=& B \\
		T_{(12)}[\delta_{(13)}B_{(2)}+ \delta_{(23)}B_{(1)}] &=& 2B \\
		T_{(13)}[\delta_{(13)}B_{(2)}+ \delta_{(23)}B_{(1)}] &=& (m+1)B.
	\end{eqnarray*}
	Since the matrix
	\[
		\left(
			\begin{array}{cc}
				m & 1\\
				2 & m+1
			\end{array}
		\right)
	\]
	is invertible ($m\ne 1,-2$), for any $A\in S^2(\Real^m)\otimes \Real^m$, we can find $B$ and $B'$ in $\Real^m$ such that
	\[
		A- [\delta_{(12)}B_{(3)}] - [\delta_{(13)}B'_{(2)}+ \delta_{(23)}B'_{(1)}]
	\]
	is totally traceless, i.e. a harmonic tensor. 

	To see that this is a direct sum decomposition, assume that we have $B,B'\in \Real^m$ and $A\in H_{2,1}$ such that
	$$
	[\delta_{(12)}B_{(3)}] + [\delta_{(13)}B'_{(2)}+\delta_{(23)}B'_{(1)}] + A =0.
	$$
	By taking trace, we find that $B=B'=0$ and hence $A=0$.
\end{proof}
Using Lemma \ref{lem:decom_21}, we can show
\begin{prop}
	\label{prop:psi3}
	\[
		{\rm Ker}({\mathbf B}_1) = {\rm Im}(\Psi_3).
	\]
\end{prop}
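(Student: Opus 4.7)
The plan is to establish each inclusion separately using the direct sum decomposition $S^2(\Real^m)\otimes \Real^m = Y_1 \oplus Y_2 \oplus H_{2,1}$ from Lemma \ref{lem:decom_21}, together with the identification $Y_2 = {\rm Im}(\Psi_3)$.

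The inclusion ${\rm Im}(\Psi_3) \subseteq {\rm Ker}(\mathbf{B}_1)$ is a direct computation. Substituting $A = \Psi_3(B)$, whose components are $A_{ijk} = -(m-2)(\delta_{ik}B_j + \delta_{jk}B_i)$, into the definition of $\mathbf{B}_1$, one evaluates the needed traces of $A$ and checks that the coefficients of the three basis tensors $[\delta_{(12)}B_{(3)}]$, $[\delta_{(13)}B_{(2)}]$, and $[\delta_{(23)}B_{(1)}]$ all vanish identically. This is routine.

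For the reverse inclusion, the strategy is to show that $\mathbf{B}_1$ acts block-triangularly on the decomposition. Specifically, I would verify: (i) $\mathbf{B}_1$ sends $H_{2,1}$ into itself, since when all traces of $A$ vanish only the term $-m[A_{(312)}+A_{(321)}]$ survives and a short trace computation shows this term is again harmonic; (ii) $\mathbf{B}_1$ is injective on $H_{2,1}$, because the identity $A_{kij} = -A_{kji}$ combined with the symmetry $A_{ijk} = A_{jik}$ defining $S^2(\Real^m)\otimes \Real^m$ forces $A_{ijk} = -A_{ijk}$ via a short chain of alternating swaps in positions $(1,2)$ and $(2,3)$; and (iii) $\mathbf{B}_1$ sends $Y_1$ into $Y_1 \oplus Y_2$, with the diagonal block on $Y_1$ equal to multiplication by $-(m-2)$, hence invertible for $m \geq 3$.

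Granted (i)--(iii), decompose any $A \in {\rm Ker}(\mathbf{B}_1)$ as $A_1 + A_2 + A_3$ with $A_i$ in the three summands. Because $A_2 \in Y_2 \subseteq {\rm Ker}(\mathbf{B}_1)$, we obtain $\mathbf{B}_1(A_1) + \mathbf{B}_1(A_3) = 0$. Projecting onto $H_{2,1}$ gives $\mathbf{B}_1(A_3) = 0$, hence $A_3 = 0$ by (ii); projecting what remains onto $Y_1$ gives $A_1 = 0$ by (iii); therefore $A = A_2 \in {\rm Im}(\Psi_3)$. The main obstacle is the block-triangularity, especially step (i): the preservation of $H_{2,1}$ is not automatic, and verifying it requires careful trace bookkeeping across the several terms in the definition of $\mathbf{B}_1$. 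Once the decomposition is shown to interact so cleanly with $\mathbf{B}_1$, the algebraic conclusion is immediate.
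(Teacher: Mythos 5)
Your proposal is correct and follows essentially the same route as the paper's proof: both use the decomposition $S^2(\Real^m)\otimes\Real^m = Y_1\oplus Y_2\oplus H_{2,1}$, verify ${\rm Im}(\Psi_3)=Y_2\subseteq{\rm Ker}(\mathbf{B}_1)$ by direct computation, establish injectivity of $\mathbf{B}_1$ on $Y_1$ (via the formula $\mathbf{B}_1([\delta_{(12)}B_{(3)}]) = (m-2)[\tfrac{m}{2}(B_{(2)}\delta_{(13)}+B_{(1)}\delta_{(23)})-B_{(3)}\delta_{(12)}]$) and on $H_{2,1}$ (via the same sym/skew chain as in Proposition \ref{prop:psi1}), and then separate the $Y_1$ and $H_{2,1}$ contributions using the fact that $\mathbf{B}_1(H_{2,1})$ is traceless while $\mathbf{B}_1(Y_1)$ has a nonvanishing trace. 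Your phrasing of this last step as a block-triangularity statement is a cleaner packaging of the paper's trace argument, but the underlying computations are identical.
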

\begin{proof}
	By definition of $\Psi_3$, it is obvious that ${\rm Im}(\Psi_3)=Y_2$ in Lemma \ref{lem:decom_21}. We verify directly that $Y_2$ lies in the kernel of ${\mathbf B}_1$. To see this, for any $B\in \Real^m$,
	\begin{eqnarray*}
	&& {\mathbf B}_1( [\delta_{(13)}B_{(2)}+\delta_{(23)}B_{(1)}]) \\
	&=& \left[ 2(m+1)B_{(3)}\delta_{(12)}- m \delta_{(23)}B_{(1)}-m \delta_{(12)}B_{(3)} -m\delta_{(13)}B_{(2)}-m\delta_{(12)}B_{(3)} \right] \\
	&& + \left[ - 2B_{(3)}\delta_{(12)} + \frac{m}{2}\left( 2B_{(2)}\delta_{(13)}+ 2B_{(1)}\delta_{(23)} \right)\right] \\
	&=& 0.
	\end{eqnarray*}
	It remains to show that ${\mathbf B}_1$ is injective on $Y_1\oplus H_{2,1}$.

	For $B\in \Real^m$, we compute
	\begin{equation}
		\label{eqn:b1y1}
		{\mathbf B}_1 ([\delta_{(12)}B_{(3)}]) = (m-2)\left[ \frac{m}{2}(B_{(2)}\delta_{(13)}+B_{(1)}\delta_{(23)})-B_{(3)}\delta_{(12)} \right],
	\end{equation}
	which implies that ${\mathbf B}_1$ is injective in $Y_1$. (It suffices to consider $T_{(23)}$ of the right hand side of \eqref{eqn:b1y1}.)

	For $A\in H_{2,1}$, the definition of ${\mathbf B}_1$ is simplified
	\[
		{\mathbf B}_1(A)= -m[A_{(312)}+A_{(321)}].
	\]
	If $A\in H_{2,1}\cap {\rm Ker}({\mathbf B}_1)$, then it is symmetric in the first two indices and skew-symmetric in the last two indices, from which we conclude that $A=0$. We have proved this before, see the end of proof of Proposition \ref{prop:psi1}.
	In other words, ${\mathbf B}_1$ is also injective in $H_{2,1}$. 

	To finish the proof, it remains to show that if there are $B\in \Real^m$ and $A\in H_{2,1}$ satisfying
	\[
		{\mathbf B}_1([\delta_{(12)}B_{(3)}]+A)=0
	\]
	then both $B$ and $A$ are $0$. To see this, we notice that ${\mathbf B}_1(A)$ is traceless, while for 
	\[
		{\rm Tr}_{(13)}( {\mathbf B}_1([\delta_{(12)}B_{(3)}]))=0
	\]
	we must have $B=0$ (see \eqref{eqn:b1y1}).
\end{proof}

Next, we discuss a decomposition of $S^2(\Real^m)\otimes S^2_0(\Real^m)$. We start by defining a few subspaces of it.

Define
\begin{eqnarray*}
	Z_1&=& \set{ \Xi_1(B)|\, B\in S^2_0(\Real^m)}\\
	Z_2&=& \set{ \Xi_2(B)|\, B\in S^2_0(\Real^m)}\\
	Z_3&=& \set{\Xi_3(B)|\, B\in \Lambda^2(\Real^m)}\\
	Z_4&=& \set{ \Xi_4(c)|\, c\in \Real} 
\end{eqnarray*}
where
\begin{eqnarray*}
	\Xi_1(B)&=& [\delta_{(12)}B_{(34)}] \\
	\Xi_2(B)&=& [B_{(23)}\delta_{(14)}+ B_{(13)}\delta_{(24)}+B_{(24)}\delta_{(13)}+B_{(14)}\delta_{(23)}-\frac{4}{m} (B_{(34)}\delta_{(12)}+B_{(12)}\delta_{(34)})]\\
	\Xi_3(B)&=& [B_{(23)}\delta_{(14)}+ B_{(13)}\delta_{(24)}+B_{(24)}\delta_{(13)}+B_{(14)}\delta_{(23)}]\\
	\Xi_4(c)&=& c[\delta_{(13)}\delta_{(24)}+\delta_{(14)}\delta_{(23)}-\frac{2}{m}\delta_{(12)}\delta_{(34)}].
\end{eqnarray*}
We also define $H_{2,2}$ to be the set of all harmonic tensors in $S^2(\Real^m)\otimes S^2(\Real^m)$.

\begin{lem}
	\label{lem:fulldecom}
	We have the following decomposition
	\[
		S^2(\Real^m)\otimes S^2_0(\Real^m) = Z_1\oplus Z_2 \oplus Z_3 \oplus Z_4 \oplus H_{2,2}.
	\]
\end{lem}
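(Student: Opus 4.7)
The plan is to identify the summands $Z_1,Z_2,Z_3,Z_4$ by a complete list of partial traces, with $H_{2,2}$ being the common kernel of all these traces. For $A\in S^2(\Real^m)\otimes S^2_0(\Real^m)$, the $(12)$- and $(34)$-symmetries together with the $(34)$-tracelessness reduce the independent partial traces to exactly two:
\[
\alpha(A):={\rm Tr}_{(12)} A=[A_{(aa12)}] \in S^2_0(\Real^m),\qquad \beta(A):={\rm Tr}_{(13)} A=[A_{(a1a2)}]\in (\Real^m)^{\otimes 2}.
\]
(A short check shows that ${\rm Tr}_{(14)}A$, ${\rm Tr}_{(23)}A$, ${\rm Tr}_{(24)}A$ all coincide with $\beta(A)$ after using the symmetries, while ${\rm Tr}_{(34)}A=0$.) I would then split $\beta=\beta^s+\beta^a$ with $\beta^s\in S^2(\Real^m)$ and $\beta^a\in\Lambda^2(\Real^m)$, and further $\beta^s=\beta^s_0+\tfrac{\tau}{m}\delta$ with $\tau:={\rm tr}\,\beta^s\in\Real$ and $\beta^s_0\in S^2_0(\Real^m)$. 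By definition, $H_{2,2}$ is exactly the simultaneous kernel of the combined trace map $\mathcal T: A\mapsto(\alpha,\beta^s_0,\beta^a,\tau)$.

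Next I would compute $\mathcal T$ on each generator by direct substitution. Careful index bookkeeping should yield
\begin{eqnarray*}
\Xi_1(B) &\longmapsto& (mB,\ B,\ 0,\ 0),\qquad B\in S^2_0(\Real^m),\\
\Xi_2(B) &\longmapsto& (0,\ (m+2-\tfrac{8}{m})B,\ 0,\ 0),\qquad B\in S^2_0(\Real^m),\\
\Xi_3(B) &\longmapsto& (0,\ 0,\ mB,\ 0),\qquad B\in \Lambda^2(\Real^m),\\
\Xi_4(c) &\longmapsto& (0,\ 0,\ 0,\ (m-1)(m+2)c),\qquad c\in \Real.
\end{eqnarray*}
So $\mathcal T$ restricted to $Z_1\oplus Z_2\oplus Z_3\oplus Z_4\to S^2_0\oplus S^2_0\oplus \Lambda^2\oplus\Real$ is represented, in the ordering of rows $(\alpha,\beta^s_0,\beta^a,\tau)$ and columns $(Z_1,Z_2,Z_3,Z_4)$, by a lower-triangular matrix with diagonal entries $m$, $m+2-8/m$, $m$, $(m-1)(m+2)$. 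All are nonzero for $m\geq 3$, so this restricted map is an isomorphism.

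The decomposition follows at once. Given $A\in S^2(\Real^m)\otimes S^2_0(\Real^m)$, back-substitution through the triangular system determines a unique $(B_1,B_2,B_3,c)$ for which $\Xi_1(B_1)+\Xi_2(B_2)+\Xi_3(B_3)+\Xi_4(c)$ has the same trace data as $A$: first $B_1=\alpha(A)/m$, then $B_3=\beta^a(A)/m$, then $c=\tau(A)/((m-1)(m+2))$, and finally $B_2=(\beta^s_0(A)-B_1)/(m+2-8/m)$. Setting $A_0:=A-\Xi_1(B_1)-\Xi_2(B_2)-\Xi_3(B_3)-\Xi_4(c)$ annihilates all trace data, so $A_0\in H_{2,2}$. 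Uniqueness comes from the same triangular system: if a sum over the five summands vanishes, applying $\mathcal T$ forces $(B_1,B_2,B_3,c)=0$ by invertibility, and then the $H_{2,2}$ term also vanishes.

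The main obstacle I anticipate is the careful bookkeeping in the four trace computations, in particular verifying the cancellations that make $\alpha$ vanish on $\Xi_2,\Xi_3,\Xi_4$ and $\beta^s_0$ vanish on $\Xi_4$; these are exactly what the coefficients $-\tfrac{4}{m}$ in $\Xi_2$ and $-\tfrac{2}{m}$ in $\Xi_4$ are engineered to enforce. The dimensional exclusions ($m=1,2$) are only low-dimensional degeneracies and are harmless in the context of Theorem \ref{thm:main1}.
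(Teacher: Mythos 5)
Your proof is correct and follows essentially the same trace-based approach as the paper; the only organizational difference is that the paper first peels off $Z_1$ via ${\rm Tr}_{(12)}$ (writing $S^2\otimes S^2_0 = Z_1\oplus(S^2_0\otimes S^2_0)$) and then works with the single remaining trace $T_{(23)}$ on $S^2_0\otimes S^2_0$, while you package both traces into one map $\mathcal T$ at once and read off a triangular matrix. One small arithmetic slip: $\beta(\Xi_3(B))=(m+2)B$, not $mB$ (the paper's $T_{(23)}\Xi_3(B)=(m+2)B$); this does not affect the argument since all that matters is that the diagonal entry is nonzero.
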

\begin{proof}
	One could verify by their definitions that these are indeed subspaces in $S^2(\Real^m)\otimes S^2_0(\Real^m)$. Obviously,
	\[
		S^2(\Real^m)\otimes S^2_0(\Real^m) =Z_1 \oplus (S^2_0(\Real^m)\otimes S^2_0(\Real^m)).
	\]
	For tensors in $S^2_0(\Real^m)\otimes S^2_0(\Real^m)$, the only nontrivial trace is $T_{(23)}$. We check
	\begin{eqnarray*}
		T_{(23)} \Xi_2(B)&=& (m+2-\frac{8}{m})B \qquad \text{} \quad \forall B\in S^2_0(\Real^m) \\
		T_{(23)} \Xi_3(B)&=& (m+2)B \qquad \text{} \quad \forall B\in \Lambda^2(\Real^m)
	\end{eqnarray*}
	and
	\[
		T_{(23)}( [\delta_{(13)}\delta_{(24)}+\delta_{(14)}\delta_{(23)}-\frac{2}{m}\delta_{(12)}\delta_{(34)}]) = (m+1-\frac{2}{m})[\delta_{(12)}].
	\]
	We derive from these equations that $Z_2$, $Z_3$ and $Z_4$ have no nontrivial intersections. For any $A\in S_0^2(\Real^m)\otimes S_0^2(\Real^m)$, $T_{(23)}(A)$ is in $\Real^m\otimes \Real^m$ and it is uniquely a sum of tensors in $S_0^2(\Real^m)$, $\Lambda^2(\Real^m)$ and $[\delta_{(12)}] \Real$, from which we find a tensor in $Z_2\oplus Z_3\oplus Z_4$ such that by subtracting it from $A$ we obtain a harmonic tensor in $H_{2,2}$. This completes the proof.
\end{proof}

\begin{lem}
	\label{lem:computeB2}
	\[
		(Z_1\oplus Z_2\oplus Z_3\oplus Z_4) \bigcap {\rm ker}({\mathbf B}_2)= Z_3\oplus Z_4\oplus Z_5 
	\]
	where
	\[
		Z_5= \set{\Xi_5(B)|\, B\in S_0^2(\Real^m)}
	\]
	and
	\[
		\Xi_5(B)= [B_{(23)}\delta_{(14)}+ B_{(13)}\delta_{(24)}+B_{(24)}\delta_{(13)}+B_{(14)}\delta_{(23)}-\frac{4}{m} B_{(12)}\delta_{(34)}].
	\]
\end{lem}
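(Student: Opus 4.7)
The plan is to reduce the lemma to explicit computations of ${\mathbf B}_2$ on the four generators $\Xi_1, \Xi_2, \Xi_3, \Xi_4$ and then to combine the results using the direct sum structure from Lemma \ref{lem:fulldecom}. There are three things to verify: that $Z_3$ lies in $\ker({\mathbf B}_2)$, that $Z_5$ lies in $\ker({\mathbf B}_2)$, and that no other combinations of elements of $Z_1, Z_2, Z_4$ produce zero.

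First, I would check $Z_3 \subseteq \ker({\mathbf B}_2)$. Substituting $\Xi_3(B)$ with $B\in \Lambda^2(\Real^m)$ into the defining formula for ${\mathbf B}_2$, every resulting term is of the form $B_{(ij)}\delta_{(kl)}$ where the two indices of $B$ lie among $\{i_2, i_3, i_4\}$. The outer symmetrization $S_{(234)}$ then enforces pairwise cancellation since $B$ is skew. Next, to obtain $Z_5 \subseteq \ker({\mathbf B}_2)$, I would use the algebraic identity $\Xi_5(B) = \Xi_2(B) + \frac{4}{m}\Xi_1(B)$ for $B\in S^2_0(\Real^m)$, which in particular shows $Z_5 \subset Z_1 \oplus Z_2$. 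The inclusion then reduces to verifying ${\mathbf B}_2(\Xi_2(B)) = -\frac{4}{m}{\mathbf B}_2(\Xi_1(B))$ on traceless symmetric $B$, a direct calculation where tracelessness of $B$ enters exactly at the contraction step.

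To close the reverse inclusion, I would compute each of ${\mathbf B}_2(\Xi_1(B))$, ${\mathbf B}_2(\Xi_2(B))$, and ${\mathbf B}_2(\Xi_4(c))$ in closed form. Their images lie in $\Real^m \otimes S^3(\Real^m)$, which decomposes into a $\delta$-trace part and a harmonic part. I would show that ${\mathbf B}_2(\Xi_4(c))$ has a nonzero $\delta$-trace component whose structure is not reproduced by any image from $Z_1 \oplus Z_2$, forcing the $Z_4$-component of any kernel element to vanish. Within $Z_1\oplus Z_2$, the assignment $(B_1, B_2) \mapsto {\mathbf B}_2(\Xi_1(B_1) + \Xi_2(B_2))$ is a linear map on $S^2_0 \oplus S^2_0$ whose kernel, by the computation from the previous step, is the diagonal $\{B_1 = \frac{4}{m} B_2\}$, which is precisely $Z_5$. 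The fact that $Z_3 \cap Z_5 = 0$ is automatic from the different trace types: $T_{(23)} \Xi_3(B)$ is skew and $T_{(23)} \Xi_5(B)$ is symmetric traceless.

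The main obstacle is the bookkeeping in the second step: expanding ${\mathbf B}_2\circ \Xi_i$ produces many summands from the $S_{(234)}$ symmetrization combined with repeated contractions against $\delta$, and the coefficient $-4/m$ relating ${\mathbf B}_2(\Xi_2)$ and ${\mathbf B}_2(\Xi_1)$ emerges only after careful cancellation. Once these explicit formulas are in hand, the direct sum decomposition from Lemma \ref{lem:fulldecom} assembles the pieces into the stated identity $(Z_1\oplus Z_2 \oplus Z_3 \oplus Z_4) \cap \ker({\mathbf B}_2) = Z_3 \oplus Z_5$ without further work.
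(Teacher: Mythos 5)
Your strategy matches the paper's: compute ${\mathbf B}_2$ explicitly on each generator $\Xi_1,\dots,\Xi_4$, use the identity $\Xi_5=\Xi_2+\tfrac{4}{m}\Xi_1$ to conclude $Z_5\subset\ker{\mathbf B}_2$, then take traces $T_{(34)}$ and $T_{(12)}$ of ${\mathbf B}_2$ to force the $Z_4$-component to vanish and to pin down the kernel inside $Z_1\oplus Z_2$ as the diagonal $mB=4B'$, i.e.\ $Z_5$.

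One inaccuracy in your sketch of $Z_3\subset\ker{\mathbf B}_2$: it is not true that after substituting $\Xi_3(B)$ into ${\mathbf B}_2$ every summand places both $B$-indices in slots $2,3,4$. The trace piece contributes
$4\,\Xi_3(B)_{(a1a4)}\delta_{(23)}=4(m+2)B_{(14)}\delta_{(23)}$, which has one $B$-index in slot $1$, and $-2(m+2)\Xi_3(B)_{(3124)}$ also produces $B_{(12)}\delta_{(34)}$. Under $S_{(234)}$ these $B_{(1\cdot)}\delta_{(\cdot\cdot)}$ contributions do vanish, but by coefficient cancellation between the two sources (each gives $\pm(2m+4)$ times the same symmetrized tensor), not by skew-symmetry of $B$. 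Skew-symmetry is what annihilates the remaining $B_{(\cdot\cdot)}\delta_{(1\cdot)}$ terms. With that correction your plan reproduces the paper's proof.
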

\begin{proof}
	We begin with some results that can be proved by computations.
	\begin{eqnarray}
		\label{eqn:b21}
		{\mathbf B}_2( \Xi_1(B))&=& S_{(234)}\left( (4-2m) B_{(12)}\delta_{(34)} + (m^2-4)B_{(34)}\delta_{(12)} \right) \\
		\label{eqn:b22}
		{\mathbf B}_2(\Xi_2(B)) &=& S_{(234)} [\frac{8(m-2)}{m}B_{(12)}\delta_{(34)} + \frac{4(4-m^2)}{m}B_{(34)}\delta_{(12)}] \\
		\label{eqn:b23}
		{\mathbf B}_2(\Xi_3(B)) &=& 0 \\
		\label{eqn:b24}
		{\mathbf B}_2(\Xi_4(1)) &=& 0.
	\end{eqnarray}

	For \eqref{eqn:b21},
	\begin{eqnarray*}
		{\mathbf B}_2( [\delta_{(12)}B_{(34)}])&=& S_{(234)}\left( (4-2m) B_{(14)}\delta_{(23)} + (m^2-4)B_{(24)}\delta_{(13)} \right)\\
						       &=& S_{(234)}\left( (4-2m) B_{(12)}\delta_{(34)} + (m^2-4)B_{(34)}\delta_{(12)} \right).
	\end{eqnarray*}

	For \eqref{eqn:b22}, we notice that $T_{(12)}(A)=0$ for any $A\in Z_2$ so that the definition for ${\mathbf B}_2$ is simplified.
	\begin{eqnarray*}
		{\mathbf B}_2( \Xi_2(B)) &=& S_{(234)} (4 \Xi_2(B)_{(a1a4)}\delta_{(23)} - 2(m+2) \Xi_2(B)_{(3124)}) \\
					 &=& S_{(234)}\left[ 4(m+2-\frac{8}{m})B_{(14)}\delta_{(23)} -2(m+2)( B_{(12)}\delta_{(34)}+B_{(32)}\delta_{(14)}\right. \\
					 && \left. +B_{(14)}\delta_{(32)}+B_{(34)}\delta_{(12)} -\frac{4}{m}(B_{(24)}\delta_{(13)}+B_{(31)}\delta_{(24)})) \right].
	\end{eqnarray*}
	Since we will do symmetrization of $(234)$ anyway, among all terms above, there are only two distinct ones, $B_{(12)}\delta_{(34)}$ and $B_{(34)}\delta_{(12)}$. Hence,
	\[
		{\mathbf B}_2(\Xi_2(B)) = S_{(234)} [\frac{8(m-2)}{m}B_{(12)}\delta_{(34)} + \frac{4(4-m^2)}{m}B_{(34)}\delta_{(12)}].
	\]

	For \eqref{eqn:b23}, 
	\begin{eqnarray*}
		{\mathbf B}_2(\Xi_3(B)) &=&S_{(234)} (4 \Xi_3(B)_{(a1a4)}\delta_{(23)} - 2(m+2) \Xi_3(B)_{(3124)}) \\
					&=& S_{(234)} \left( (2m+4)B_{(14)}\delta_{(32)} -(2m+4)(B_{(34)}\delta_{(12)}+B_{(12)}\delta_{(34)}+B_{(32)}\delta_{(14)}) \right) \\
					&=&S_{(234)}\left( -4(m+2) B_{(34)}\delta_{(12)} \right) =0.
	\end{eqnarray*}

	For \eqref{eqn:b24},
	\begin{eqnarray*}
	&& {\mathbf B}_2(\Xi_4(1))\\
	&=&S_{(234)} \left( (2m-\frac{8}{m}) \delta_{(14)}\delta_{(23)}-2(m+2)\delta_{(12)}\delta_{(34)}+ \frac{4(m+2)}{m}\delta_{(13)}\delta_{(24)} \right) \\
	&=&0.
	\end{eqnarray*}

	It follows from \eqref{eqn:b21} and \eqref{eqn:b22} that
	$$
	{\mathbf B}_2(\Xi_5(B))=0
	$$
	because our definition of $\Xi_5$ is equivalent to
	$$
	\Xi_5(B)= \Xi_2(B) + \frac{4}{m} \Xi_1(B), \qquad \text{} \quad \forall B\in S_0^2(\Real^m).
	$$
	Together with \eqref{eqn:b23} and \eqref{eqn:b24}, we find that $Z_3\oplus Z_4\oplus Z_5$ is in ${\rm Ker}({\mathbf B}_2)$. 

	To finish the proof, we claim that for $B, B'\in S^2_0(\Real^m)$ satisfying
	\begin{equation}
		\label{eqn:claim}
		{\mathbf B}_2 \left( \Xi_1(B)+ \Xi_2(B') \right)=0,
	\end{equation}
	we have
	$$
	\Xi_1(B)+\Xi_2(B')\in Z_5.
	$$

	Taking $T_{(12)}$ of \eqref{eqn:b21}, we obtain
	\begin{eqnarray*}
		T_{(12)} {\mathbf B}_2(\Xi_1(B)) &=& \frac{1}{3}T_{(12)} \left( (4-2m)B_{(12)}\delta_{(34)} + (m^2-4) B_{(34)}\delta_{(12)} \right) \\
						 &&+  \frac{1}{3}T_{(12)} \left( (4-2m)B_{(13)}\delta_{(24)} + (m^2-4) B_{(24)}\delta_{(13)} \right) \\
						 &&+  \frac{1}{3}T_{(12)} \left( (4-2m)B_{(14)}\delta_{(23)} + (m^2-4) B_{(23)}\delta_{(14)} \right) \\
						 &=& \frac{1}{3}\left( (m^2-4)m+ (4-2m)*2 + (m^2-4)*2 \right) B_{(12)}\\
						 &=&\frac{1}{3}m(m+4)(m-2) B_{(12)},
	\end{eqnarray*}
	and from \eqref{eqn:b22}
	$$
	T_{(12)} {\mathbf B}_2(\Xi_2(B')) = -\frac{4}{3}(m+4)(m-2) B'_{(12)}.
	$$
	Hence, by taking trace $T_{(12)}$ of \eqref{eqn:claim}, we find that
	$$
	mB-4B'=0,
	$$
	which finishes our proof of the claim and the lemma.
\end{proof}

The Weyl tensor appears in the decomposition and the kernel of ${\mathbf B}_2$.
\begin{lem}
	\label{lem:weyl} 
	\[
		H_{2,2}\cap {\rm ker}({\mathbf B}_2) = \widetilde{\mathcal W}=s(\mathcal W).
	\]
\end{lem}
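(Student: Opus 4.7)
My plan is to reduce the lemma to two complementary facts: \emph{(i)} $\widetilde{\mathcal C}\cap H_{2,2}=\widetilde{\mathcal W}$, and \emph{(ii)} on $H_{2,2}$ the kernel of $\mathbf{B}_2$ is exactly $\widetilde{\mathcal C}$. Once both hold, we obtain $H_{2,2}\cap\ker(\mathbf{B}_2)=H_{2,2}\cap\widetilde{\mathcal C}=\widetilde{\mathcal W}$. For \emph{(i)}, I would compute the three independent traces $T_{(12)}$, $T_{(34)}$, $T_{(13)}$ of $s(R)$ directly from the definition of $s$. Using the antisymmetry of $R$ in each pair together with the pair-swap symmetry, each of these traces works out to be the Ricci tensor of $R$ up to a nonzero scalar constant (for instance $T_{(12)}(s(R))_{kl}=\tfrac12\sum_a(R_{akla}+R_{alka})=-\mathrm{Ric}_{kl}$). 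Hence $s(R)\in H_{2,2}$ if and only if $\mathrm{Ric}(R)=0$, i.e.\ $R\in\mathcal W$, which gives \emph{(i)}.

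For \emph{(ii)}, the three tensors $[A_{(a1a4)}]$, $[A_{(aa14)}]$, $[A_{(aa24)}]$ in the definition of $\mathbf{B}_2$ are all traces of $A$ and therefore vanish on $H_{2,2}$, so
\[
	\mathbf{B}_2(A)=-2(m+2)\,S_{(234)}[A_{(3124)}].
\]
Expanding $S_{(234)}$ into its six permutations and collapsing them in pairs using the $S^2\otimes S^2$ symmetries of $A$ (and noting $[A_{(1324)}]=[A_{(1342)}]$ via the second-pair symmetry) reduces this to
\[
	\mathbf{B}_2(A)=-\tfrac{2(m+2)}{3}\bigl[A_{(1234)}+A_{(1342)}+A_{(1423)}\bigr],
\]
which is precisely the first-Bianchi combination of Remark \ref{rem:curv_sym}. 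Thus for $A\in H_{2,2}$, $\mathbf{B}_2(A)=0$ iff $A$ satisfies this Bianchi identity. If it does, I would write $A=\tilde R+\Psi_2(B)$ via Proposition \ref{prop:psi2}; since $\tilde R\in\widetilde{\mathcal C}$ already satisfies Bianchi, $\Psi_2(B)$ must satisfy it too, and the argument at the end of the proof of Proposition \ref{prop:psi2} then forces $B=0$, so $A=\tilde R\in\widetilde{\mathcal C}$. This establishes \emph{(ii)}.

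The only genuinely non-routine step is the six-term unfolding of $S_{(234)}[A_{(3124)}]$ and its subsequent identification with the first-Bianchi combination; this is careful but elementary index bookkeeping using only the two $S^2$ symmetries of $A$. Once it is done, combining \emph{(i)} and \emph{(ii)} yields the lemma immediately, with no further input beyond what was already prepared in Proposition \ref{prop:psi2} and Remark \ref{rem:curv_sym}.
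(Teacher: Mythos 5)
Your proposal is correct, and it takes a genuinely different route to the same destination. Both proofs share the key first observation: after discarding the trace terms in $\mathbf B_2$, the restriction of $\mathbf B_2$ to $H_{2,2}$ is a nonzero multiple of the first-Bianchi sum $A_{(1234)}+A_{(1342)}+A_{(1423)}$ (your six-term expansion of $S_{(234)}[A_{(3124)}]$ is correct and yields the constant $-\tfrac{2(m+2)}{3}$). The divergence comes afterward. The paper proceeds concretely: given $A\in H_{2,2}\cap\ker\mathbf B_2$ it writes down the explicit candidate $W=[-\tfrac{2}{3}A_{(1234)}-\tfrac{4}{3}A_{(1324)}]$, verifies $W\in\mathcal W$ by direct index manipulation, and checks $s(W)=A$; the reverse inclusion is verified separately. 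You instead factor the argument through Proposition~\ref{prop:psi2}: the Bianchi condition alone characterizes $\widetilde{\mathcal C}$ inside $S^2\otimes S^2$ (since $\widetilde{\mathcal C}\oplus\Psi_2(\Real^m\otimes S^3)$ is a direct sum in which the first summand satisfies Bianchi and the second satisfies it only trivially), so $H_{2,2}\cap\ker\mathbf B_2=H_{2,2}\cap\widetilde{\mathcal C}$; a trace computation then shows the traces of $s(R)$ are nonzero multiples of $\mathrm{Ric}(R)$, so tracelessness of $s(R)$ forces $R\in\mathcal W$, giving $H_{2,2}\cap\widetilde{\mathcal C}=\widetilde{\mathcal W}$. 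What your approach buys is modularity and conceptual transparency: it reuses the decomposition already proved in Section~\ref{sec:alg} and isolates the one new algebraic fact (traces of $s(R)$ recover Ricci) rather than re-verifying the curvature symmetries from scratch; it also exposes the intermediate identification $H_{2,2}\cap\ker\mathbf B_2=H_{2,2}\cap\widetilde{\mathcal C}$ as a standalone fact. What the paper's route buys is an explicit inverse formula for $W$ in terms of $A$, which you do not produce (though the paper does not actually need it later, since in Section~\ref{sec:infinity} only the relation $s(W)=\tilde W$ is invoked). Both proofs are correct and roughly comparable in length.
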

\begin{proof}
	By the definition of ${\mathbf B}_2$, we derive that for some $A\in H_{2,2}$, 
	\[
		{\mathbf B}_2(A)=0 \Longleftrightarrow A_{(1234)} +A_{(1342)}+A_{(1423)}=0.
	\]
	Hence, a tensor $A$ lies in $H_{2,2}\cap {\rm Ker}({\mathbf B}_2)$ if and only if
	\begin{enumerate}[(a)]
		\item $A_{(1234)}=A_{(2134)}$ and $A_{(1234)}=A_{(1243)}$;
		\item all traces vanish;
		\item $A_{(1234)} +A_{(1342)}+A_{(1423)}=0$.
	\end{enumerate}
	It is a corollary of (a-c) that 
	\begin{enumerate}[(d)]
		\item 	$A_{(1234)}=A_{(3412)}$.
	\end{enumerate}

	{\bf Step 1.} $H_{2,2}\cap {\rm Ker}{\mathbf B}_2\subset \widetilde{\mathcal W}$.

	By the definition of $\widetilde{\mathcal W}$, it amounts to proving for each $A\in H_{2,2}\cap {\rm Ker}({\bf B}_2)$, there is $W\in \mathcal W$ such that
	$$
	A= s(W).
	$$
	Given $A$, set
	$$
	W= [-\frac{2}{3}A_{(1234)}-\frac{4}{3}A_{(1324)}].
	$$
	It remains to check that $W$ is a Weyl tensor and $A=s(W)$.

	Recall that $W$ is a Weyl tensor if and only if 
	\begin{enumerate}[(i)]
		\item $W_{(1234)}=-W_{(2134)}$ and $W_{(1234)}=-W_{(1243)}$;
		\item all traces vanish;
		\item $W_{(1234)}+ W_{(1342)}+W_{(1423)}=0$.
	\end{enumerate}
	We derive from (i)-(iii) that $W_{(1234)}=W_{(3412)}$.

	Obviously, (ii) and (iii) for $W$ follow from (b) and (c) of $A$. For (i), we compute
	\begin{eqnarray*}
		&& W_{(1234)}+W_{(2134)}\\
		&=& -\left( \frac{2}{3}A_{(1234)}+\frac{4}{3}A_{(1324)}+ \frac{2}{3}A_{(2134)}+\frac{4}{3}A_{(2314)}\right) \\
		&=& - \frac{4}{3} \left( A_{(1234)} + A_{(1324)}+A_{(1423)}\right) \\
		&=&0.
	\end{eqnarray*}
	Here in the third line above, we used $A_{(2314)}=A_{(1423)}$ and in the last line above, we used (c). The other half is proved similarly. Hence, we have verified that $W\in \mathcal W$.

	It remains to check that
	\[
		s(W)=A.
	\]
	In fact, direct computation shows
	\begin{eqnarray*}
		s(W)_{(1234)}&=& s([-\frac{2}{3}A_{(1234)}-\frac{4}{3}A_{(1324)}])_{(1234)} \\
			     &=&\frac{1}{2}\left( [-\frac{2}{3}A_{(1234)}-\frac{4}{3}A_{(1324)}]_{(1342)}+ [-\frac{2}{3}A_{(1234)}-\frac{4}{3}A_{(1324)}]_{(1432)} \right) \\
			     &=&\frac{1}{2} \left( -\frac{2}{3}A_{(1342)}-\frac{4}{3}A_{(1423)} -\frac{2}{3}A_{(1432)}-\frac{4}{3}A_{(1324)} \right)\\
			     &=& -A_{(1324)}-A_{(1423)}=A_{(1234)}.
	\end{eqnarray*}

	{\bf Step 2.} $H_{2,2}\cap {\rm Ker}{\mathbf B}_2\supset \widetilde{\mathcal W}$. Namely, for each $W\in \mathcal W$, we need to show
	$$
	s(W) \in H_{2,2}\cap{\rm Ker}{\mathbf B}_2. 
	$$
	By the definition of $s(W)$ 
	\[
		s(W)=\frac{1}{2} ( W_{(1342)}+W_{(1432)}),
	\]
	one verifies (a)-(c) directly.
\end{proof}

Combining Lemma \ref{lem:computeB2} and Lemma \ref{lem:weyl}, we obtain
\begin{prop}
	\label{prop:decomp2}
	\[
		S^2(\Real^m)\otimes S^2_0(\Real^m) \bigcap {\rm ker}({\mathbf B}_2) = Z_3\oplus Z_4\oplus Z_5\oplus \widetilde{\mathcal W}.
	\]
	Moreover, using the definition of $\Psi_4$, 
	\begin{eqnarray*}
		Z_3 &=& \Psi_4(\Lambda^2(\Real^m)) \\
		Z_4 &=& \Psi_4( \set{c [\delta_{(12)}]| \, c\in \mathbb R})\\
		Z_5 &=&\Psi_4(S_0^2(\Real^m)).
	\end{eqnarray*}
\end{prop}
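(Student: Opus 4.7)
The plan is to combine Lemmas \ref{lem:fulldecom}, \ref{lem:computeB2}, and \ref{lem:weyl}, and then to identify $Z_3$ and $Z_5$ with images of $\Psi_4$ by direct comparison. First I would use Lemma \ref{lem:fulldecom} to write any $A\in S^2(\Real^m)\otimes S^2_0(\Real^m)$ uniquely as $A=A'+A''$ with $A'\in V:=Z_1\oplus Z_2\oplus Z_3\oplus Z_4$ and $A''\in H_{2,2}$. Lemma \ref{lem:computeB2} gives $V\cap \ker({\mathbf B}_2)=Z_3\oplus Z_5$, and Lemma \ref{lem:weyl} gives $H_{2,2}\cap \ker({\mathbf B}_2)=\widetilde{\mathcal W}$. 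Since $Z_5\subset Z_1\oplus Z_2\subset V$ and $\widetilde{\mathcal W}\subset H_{2,2}$, the three summands are pairwise disjoint inside the decomposition of Lemma \ref{lem:fulldecom}, so $Z_3\oplus Z_5\oplus \widetilde{\mathcal W}$ is direct and sits inside $\ker({\mathbf B}_2)$.

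The nontrivial direction is the reverse inclusion. Given $A\in \ker({\mathbf B}_2)$ I would work with the identity ${\mathbf B}_2(A')=-{\mathbf B}_2(A'')$. The crux is the claim that ${\mathbf B}_2(A'')$ is totally traceless: since all traces of $A''$ vanish, the three trace terms in the formula for ${\mathbf B}_2$ drop out, leaving ${\mathbf B}_2(A'')=-2(m+2)\,S_{(234)}[A''_{(3124)}]$, and the symmetry of $A''$ in its last two indices together with the symmetrization $S_{(234)}$ turns every partial trace of this tensor into a partial trace of $A''$, which vanishes. With this in hand, applying $T_{(34)}$ and $T_{(12)}$ to ${\mathbf B}_2(A')=-{\mathbf B}_2(A'')$ gives zero on the right-hand side; by the explicit trace computations in the proof of Lemma \ref{lem:computeB2}, this forces the $Z_4$-scalar $c$ in $A'$ to vanish and forces the $Z_1,Z_2$ components $B,B'$ of $A'$ to satisfy $mB=4B'$, which is precisely the condition for $\Xi_1(B)+\Xi_2(B')$ to lie in $Z_5$. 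Thus $A'\in Z_3\oplus Z_5$, so ${\mathbf B}_2(A')=0$ and consequently ${\mathbf B}_2(A'')=0$, placing $A''$ in $\widetilde{\mathcal W}$.

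For the identifications $Z_3=\Psi_4(\Lambda^2(\Real^m))$ and $Z_5=\Psi_4(S^2_0(\Real^m))$, I would compare formulas directly. When $B\in\Lambda^2(\Real^m)$, the symmetric piece $B_{(12)}+B_{(21)}$ in $\Psi_4(B)$ vanishes and one reads off $\Psi_4(B)=-\frac{m}{2}\Xi_3(B)$. When $B\in S^2_0(\Real^m)$, that same piece equals $2B_{(12)}$ and a short calculation using the identity $\Xi_5(B)=\Xi_2(B)+\frac{4}{m}\Xi_1(B)$ (already exploited in Lemma \ref{lem:computeB2}) yields $\Psi_4(B)=-\frac{m}{2}\Xi_5(B)$. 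Hence $\Psi_4$ restricted to each of $\Lambda^2(\Real^m)$ and $S^2_0(\Real^m)$ surjects onto $Z_3$ and $Z_5$ respectively.

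I expect the main obstacle to be the index-level bookkeeping in the tracelessness claim for ${\mathbf B}_2(A'')$: the interaction of the permutation in $A''_{(3124)}$ with the symmetrization $S_{(234)}$ must be handled carefully so that each trace slot of the output really collapses to a trace of $A''$. Once that is pinned down, the rest of the argument is pure linear algebra already available in the preceding lemmas.
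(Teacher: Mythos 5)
Your proposal is correct and actually fills a gap in the paper's own terse proof. The paper reduces the proposition to verifying that $Z_3\oplus Z_5\oplus\widetilde{\mathcal W}$ is a direct sum, but directness together with the containment $Z_3\oplus Z_5\oplus\widetilde{\mathcal W}\subset\ker({\mathbf B}_2)$ (which follows from Lemmas \ref{lem:computeB2} and \ref{lem:weyl}) does not by itself give the reverse inclusion: the kernel of a linear map restricted to $V\oplus H$ need not split as $(\ker\cap V)\oplus(\ker\cap H)$. You correctly single this out as the nontrivial direction and supply the missing step. Your key observation is sound: for $A''\in H_{2,2}$ the three $\delta$-bearing terms in ${\mathbf B}_2$ vanish, leaving ${\mathbf B}_2(A'')=-2(m+2)\,S_{(234)}[A''_{(3124)}]$; using the symmetry of $A''$ in its \emph{first} pair of indices (to move the slot-one index back to the front), this is $-2(m+2)\,S_{(234)}[A''_{(1234)}]$, and every partial trace of the latter collapses to a vanishing trace of $A''$. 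Hence ${\mathbf B}_2(A'')$ is totally traceless, so applying $T_{(34)}$ and $T_{(12)}$ to ${\mathbf B}_2(A')=-{\mathbf B}_2(A'')$ gives zero and, by the explicit trace computations already in the proof of Lemma \ref{lem:computeB2}, forces $c=0$ and $mB=4B'$, placing $A'$ in $Z_3\oplus Z_5$ and then $A''$ in $\widetilde{\mathcal W}$. Your checks that $\Psi_4|_{\Lambda^2(\Real^m)}=-\frac{m}{2}\Xi_3$ and $\Psi_4|_{S_0^2(\Real^m)}=-\frac{m}{2}\Xi_5$ (via $\Xi_5=\Xi_2+\frac{4}{m}\Xi_1$) are also correct; the paper asserts these identities in the statement but does not verify them in the proof. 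The ingredients are the same as the paper's, but what your write-up buys is an explicit, gap-free derivation of the inclusion $\ker({\mathbf B}_2)\cap\bigl(S^2(\Real^m)\otimes S^2_0(\Real^m)\bigr)\subseteq Z_3\oplus Z_5\oplus\widetilde{\mathcal W}$.
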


\begin{proof}
	The moreover part of the proposition is a direct consequence of the definitions of $\Psi_4$ and $Z_3, \cdots , Z_5$. By Lemma \ref{lem:fulldecom}, Lemma \ref{lem:computeB2} and Lemma \ref{lem:weyl}, 
	\[
		S^2(\Real^m)\otimes S^2_0(\Real^m) \bigcap {\rm ker}({\mathbf B}_2) \supset Z_3\oplus {Z_4}\oplus Z_5\oplus \widetilde{\mathcal W}.
	\]
	It remains to prove the reverse direction. For any $A$ in the left-hand side, by the definition of ${\mathbf B}_2$, we have
		\[
			S_{(234)}[4A_{(a1a4)}\delta_{(23)}-2(m+2)A_{(3124)}-2A_{(aa14)}\delta_{(23)}+(m+2)A_{(aa24)}\delta_{(13)}]=0.
		\]
	By taking $T_{(12)}$ of the above equation and noticing that $T_{(34)}A=0$, we have the following equation of $2$-tensors:
	$$
	(-4m) T_{(23)}A + (m^2+2m-4) T_{(12)}A + \operatorname{Tr}( T_{(23)}A) \delta =0.
	$$
	By taking the traceless symmetric part of it and noticing that $T_{(12)}A \in S^2_0(\mathbb R^m)$, we get
	$$
	(-4m) \operatorname{Sym}_0 T_{(23)} A + (m^2+2m -4) T_{(12)}A =0.
	$$
	The key fact in this proof is that the same equation holds for $\Xi_5(B)$ for any $B\in S^2_0(\mathbb R^m)$, i.e.
	$$
	(-4m) \operatorname{Sym}_0 T_{(23)} \Xi_5(B) + (m^2+2m -4) T_{(12)} \Xi_5(B) =0.
	$$
	This can be verified directly by the definition of $\Xi_5$. We also note that $T_{(12)}$ is an isomorphism between $Z_5$ and $S^2_0(\mathbb R^m)$. Therefore, for $A$ as above, there is $A_5\in Z_5$ such that $T_{(12)}(A-A_5)$ and $\operatorname{Sym}_0(T_{(23)})(A-A_5)$ vanish at the same time. As in the proof of Lemma \ref{lem:fulldecom}, by further subtracting $A_4\in Z_4$ and $A_3\in Z_3$, we are left with a tensor in ${\rm Ker}({\mathbf B}_2)$ that is totally traceless. The proof of this proposition is done by using Lemma \ref{lem:weyl} again.
\end{proof}

\section{Bianchi coordinate at infinity}
\label{sec:analysis}

The aim of this section is to show how we can turn an almost Bianchi coordinate into a Bianchi one. This amounts to solving a harmonic map equation using the Schauder fixed point theorem.

We assume that a natural coordinate system is given for $\Omega_R:=\Real^m \setminus B_R\subset \Real^m$. Let $g_e$ be the flat metric given by this coordinate. A map $\varphi$ from $\Omega_R$ to $\Omega_R$ is a vector-valued function. It is harmonic from $(\Omega_R,g_e)$ to $(\Omega_R,g)$ when
\[
	\triangle \varphi_i + \Gamma^i_{jk}(\varphi(x)) \pfrac{\varphi_j}{x_a} \pfrac{\varphi_k}{x_a}=0, \qquad \text{for} \quad  i=1,\cdots ,m
\]
Here $\Gamma$ is the Christoffel symbol of the metric $g$.

\begin{thm}
	\label{thm:pde} Given a Riemannian metric $g$ defined in $\Omega_R$ satisfying
	\begin{equation}
		\label{eqn:almostflat}
		g_{ij}-\delta_{ij}\in O(\abs{x}^{-(\tilde{n}-2)})
	\end{equation}
	for some $\tilde{n}>2$, assume the identity map from $\Omega_R$ to itself is approximately a harmonic map from $(\Omega_R,g_e)$ to $(\Omega_R,g)$ in the sense that
	\begin{equation}
		\label{eqn:hmapprox}
		\Gamma^i_{aa}(x) = O(\abs{x}^{-\tilde{n}}).
	\end{equation}
	Then for any $\varepsilon>0$, there is some $R'$ sufficiently large and an $\Real^m$-valued function $u$ defined on $\Omega_{R'}$ such that $u$ solves 
	\begin{equation}
		\label{eqn:hmpde}
		\triangle u_i + \Gamma^i_{jk}(x+u(x)) \left( \delta_{ja}+\pfrac{u_j}{x_a} \right)\left( \delta_{ka}+\pfrac{u_k}{x_a} \right)=0
	\end{equation}
	and
	\[
		u\in O(\abs{x}^{-(\tilde{n}-2-\varepsilon)}).
	\]
	In particular, $\varphi(x)=x+u(x)$ is a harmonic map from $(\Omega_{R'},g_e)$ to $(\Omega_{R},g)$.
\end{thm}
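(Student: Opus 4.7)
The plan is to recast \eqref{eqn:hmpde} as a fixed-point problem $u=T(u)$ in a weighted Hölder space of decaying functions on the exterior domain and apply the Schauder fixed-point theorem. Let $\delta=\tilde{n}-2-\varepsilon$ and work in the Banach space $X=C^{2,\alpha}_{-\delta}(\Omega_{R'})$ equipped with the norm
\[
\norm{u}_{X}=\sup_{x\in\Omega_{R'}}\sum_{k=0}^{2}\abs{x}^{\delta+k}\abs{\partial^{(k)}u(x)}+[\text{weighted }C^{\alpha}\text{-seminorm}],
\]
and choose $R'$ large and $\varepsilon>0$ small; in particular $\varepsilon$ can be taken so that $\delta$ avoids the indicial roots $\set{0,1,2,\ldots}\cup\set{-(m-2),-(m-1),\ldots}$ of the Euclidean Laplacian on $\Omega_{R'}$. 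Expanding the right hand side of \eqref{eqn:hmpde} gives
\[
\Delta u_i= -F_i(x,u,\nabla u):=-\Gamma^i_{aa}(x+u)-2\Gamma^i_{ak}(x+u)\partial_a u_k-\Gamma^i_{jk}(x+u)\partial_a u_j\partial_a u_k,
\]
and the strategy is: given $v\in X$, define $T(v)=u$ to be the unique solution of the linear exterior Poisson problem $\Delta u=-F(x,v,\nabla v)$ with $u|_{\partial\Omega_{R'}}=0$ and $u\to 0$ at infinity, and find a fixed point.

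Paragraph 2: For the linear theory, standard weighted Schauder estimates on $\Omega_{R'}$ (obtained by scaling the interior Schauder estimates on annuli $\set{2^k\leq\abs{x}\leq 2^{k+1}}$) together with the invertibility of the Laplacian between weighted spaces outside the indicial set produce a bounded solution operator
\[
G\colon C^{0,\alpha}_{-(\delta+2)}(\Omega_{R'})\longrightarrow C^{2,\alpha}_{-\delta}(\Omega_{R'}),\qquad \norm{G(f)}_X\leq C\norm{f}_{C^{0,\alpha}_{-(\delta+2)}}.
\]
The decay $\abs{u}\lesssim\abs{x}^{-\delta}$ for source with decay $\abs{f}\lesssim\abs{x}^{-(\delta+2)}$ is the standard Newton-potential behaviour on $\Real^m$ (exterior domain) provided $\delta$ avoids the aforementioned indicial roots.

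Paragraph 3: I would then estimate the nonlinearity. Since $g-g_e\in O(\abs{x}^{-(\tilde n-2)})$, each individual component of $\Gamma$ decays like $\abs{x}^{-(\tilde n-1)}$, while by hypothesis \eqref{eqn:hmapprox} the special trace $\Gamma^i_{aa}(x)$ decays like $\abs{x}^{-\tilde n}$. For $v\in X$ with $\norm{v}_X\leq 1$, a Taylor expansion gives
\[
\Gamma^i_{aa}(x+v)=\Gamma^i_{aa}(x)+O\bigl(\abs{x}^{-\tilde n}\abs{v}\bigr)=O(\abs{x}^{-\tilde n}),
\]
while the cross term satisfies $\abs{\Gamma^i_{ak}(x+v)\partial_a v_k}\lesssim\abs{x}^{-(\tilde n-1)}\cdot\abs{x}^{-(\delta+1)}=\abs{x}^{-(2\tilde n-2-\varepsilon)}$ and the quadratic term is even smaller. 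Since $\tilde n>2$, for $\varepsilon$ small enough all three contributions are dominated by $\abs{x}^{-(\delta+2)}=\abs{x}^{-(\tilde n-\varepsilon)}$, i.e. $\Phi(v):=F(\cdot,v,\nabla v)\in C^{0,\alpha}_{-(\delta+2)}$. A further computation shows that, after shrinking to a ball of radius $\eta$ in $X$ and choosing $R'$ large, $\Phi$ maps this ball into a ball of radius $\eta/(2\norm{G})$ in $C^{0,\alpha}_{-(\delta+2)}$, so $T=-G\circ\Phi$ maps the closed $\eta$-ball of $X$ into itself. Continuity of $T$ is immediate from the smooth dependence of $F$ on its arguments, and compactness comes from the fact that $\Phi$ in fact lands in a slightly stronger weighted space (one can replace $\alpha$ by $\alpha'>\alpha$ or gain an $\varepsilon'>\varepsilon$ of decay from the estimates above), whose inclusion into $C^{2,\alpha}_{-\delta}$ on bounded subsets of $\Omega_{R'}$ is compact by Arzelà--Ascoli. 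The Schauder fixed-point theorem then yields $u\in X$ with $u=T(u)$, i.e. a solution of \eqref{eqn:hmpde} with $u\in O(\abs{x}^{-\delta})$.

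Paragraph 4: The main obstacle I expect is the bookkeeping of weights: one must simultaneously ensure that (a) the hypothesis \eqref{eqn:hmapprox} is strong enough to place the zeroth-order piece of $F$ in $C^{0,\alpha}_{-(\delta+2)}$, which forces $\delta+2\leq \tilde n$ and explains the loss $\varepsilon$; (b) $\delta$ avoids the indicial roots of $\Delta$ on $\Omega_{R'}$ so that the linear solver $G$ is bounded; and (c) the nonlinear and cross terms are actually \emph{small} perturbations, which is why one needs $R'$ large and $\norm{v}_X$ small. A secondary subtlety is that the Hölder seminorms of $F(x,v,\nabla v)$ must also be controlled in the weighted norm; this is handled by bounding the derivative of $F$ in its arguments and invoking the weighted product rule, essentially because differentiating a product of terms in weighted Hölder spaces obeys the same scaling bookkeeping as pointwise estimation. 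Once these weight/indicial considerations are arranged, the remainder of the argument is routine application of Schauder's theorem.
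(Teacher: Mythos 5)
Your proposal follows essentially the same route as the paper: weighted Hölder spaces on the exterior domain, a bounded linear solution operator for the Poisson equation with a decay gain, an estimate showing the nonlinearity decays at rate $\tilde n$ (slightly better than $\delta+2=\tilde n-\varepsilon$) so that the map is a small self-map for $R'$ large, and the Schauder fixed-point theorem with compactness furnished by the embedding of a strictly better-weighted Hölder space. The only cosmetic differences are that you solve the model linear problem with a Dirichlet boundary condition while the paper extends the source to all of $\Real^m$ and convolves with the Green's function, and you make the indicial-root restriction on $\delta$ explicit, which the paper handles implicitly by requiring $\beta\notin\mathbb Z$.
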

The proof requires some estimates for the Poisson equation in weighted Schauder spaces, which will be discussed in Section \ref{sub:weighted} and Section \ref{sub:poisson}. After that, we prove Theorem \ref{thm:pde}.

\subsection{Weighted function space}
\label{sub:weighted}
For $R>0$, recall that $\Omega_R= \Real^m \setminus B_R$ and set
\[
	S_r: B_2 \setminus B_1\to B_{2r}\setminus B_{r}
\]
to be the map
\[
	S_r(x)=r x.
\]
For functions defined on $\Omega_R$, we define the weighted H\"older norm ($\beta>0$)
\[
	\norm{v}_{\mathcal X_{\alpha,\beta}(\Omega_R)}= \sup_{r\geq R}  r^{\beta}\norm{v\circ S_r}_{C^\alpha(B_2 \setminus B_1)}.
\]
We denote the corresponding Banach space by $\mathcal X_{\alpha,\beta}(\Omega_R)$. When there is no ambiguity, we often omit $\Omega_R$ and write $\mathcal X_{\alpha,\beta}$ only.

\begin{rem}
	Here in the above definitions, we allow $\alpha$ to be any positive real number other than integers. For example, by $C^{5/2}$, we mean the set of functions whose derivatives up to second order are H\"older continuous with exponent $1/2$.
\end{rem}

\begin{rem}
	\label{rem:kalf}
	The exact value of $\alpha$ is not important for the proofs in this paper. Later, for technical convenience, we may require $\alpha$ to be bigger than some dimensional constant so that we can use the fact that: a $C^\alpha$ function on $S^{m-1}$ has its spherical harmonic expansion converge uniformly. See \cite{kalf1995}.
\end{rem}

We discuss a few properties of the weighted functions.

For the derivative, we have
\begin{equation}
	\label{eqn:partial}
	\norm{\partial_{x_i} v}_{\mathcal X_{\alpha-1,\beta+1}(\Omega_R)}\leq C \norm{v}_{\mathcal X_{\alpha,\beta}(\Omega_R)} \qquad \text{} \quad \forall i=1,\cdots m.
\end{equation}
To see this, for any $r\geq R$,
\begin{eqnarray*}
	r^{\beta+1} \norm{(\partial_{x_i} v)\circ S_r}_{C^{\alpha-1}(B_2\setminus B_1)} &=& r^{\beta+1} \norm{ r^{-1} \partial_{x_i}(v\circ S_r)}_{C^{\alpha-1}(B_2\setminus B_1)} \\
											&\leq& r^{\beta} \norm{ v\circ S_r}_{C^\alpha(B_2\setminus B_1)}.
\end{eqnarray*}

For the multiplication, we have
\begin{equation}
	\label{eqn:multi}
	\norm{fg}_{\mathcal X_{\min(\alpha_1,\alpha_2),\beta_1+\beta_2}(\Omega_R)} \leq C \norm{f}_{\mathcal X_{\alpha_1,\beta_1}(\Omega_R)} \norm{g}_{\mathcal X_{\alpha_2,\beta_2}(\Omega_R)}.
\end{equation}
The proof is omitted.

We will also need the following result about the composition of weighted functions.
\begin{lem}
	\label{lem:composit}
	Assume that $f=O(\abs{x}^{-l})$ for some $l>0$ on $\Omega_{R/2}$. For $\Real^m$-valued function $u$ on $\Omega_R$, set
	\[
		\varphi(x)=x+u(x).
	\]
	If for some $\beta>0$
	\[
		\norm{u}_{\mathcal X_{\alpha,\beta}(\Omega_R)}\leq 1,
	\]
	then as long as $R$ is larger than some universal number, we have
	\[
		\norm{f\circ \varphi}_{\mathcal X_{\alpha,l}(\Omega_{R})}\leq C.
	\]
\end{lem}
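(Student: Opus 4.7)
The strategy is to rescale the annular region $B_{2r}\setminus B_r$ to the fixed annulus $B_2\setminus B_1$, use the scale invariance of the weighted space together with the strong decay hypothesis on $f$, and then invoke a standard composition estimate for H\"older functions on a fixed compact set.

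First, for each $r\geq R$ and $y\in B_2\setminus B_1$ I would compute
\[
	(f\circ\varphi\circ S_r)(y) = f(ry + u(ry)) = f\bigl(r(y+v(y))\bigr) = r^{-l}\,\tilde{f}_r(y+v(y)),
\]
where $v(y):=r^{-1}u(ry)$, $\psi_r(y):=y+v(y)$, and $\tilde{f}_r(y):=r^l f(ry)$. Hence
\[
	r^l\,\norm{(f\circ\varphi)\circ S_r}_{C^\alpha(B_2\setminus B_1)} = \norm{\tilde{f}_r\circ\psi_r}_{C^\alpha(B_2\setminus B_1)},
\]
and it suffices to bound the right-hand side uniformly in $r$.

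Next, the hypothesis $f=O(\abs{x}^{-l})$ in the strong sense of Definition \ref{defn:O} gives $\abs{\partial^{(k)}f}(x)\leq C(k)\abs{x}^{-l-k}$ for all $k$. A chain-rule computation then yields $\abs{\partial_y^{(k)}\tilde{f}_r(y)}\leq C(k)\abs{y}^{-l-k}$, so $\tilde{f}_r$ is uniformly bounded in $C^\alpha(B_3\setminus B_{1/2})$ independently of $r$. On the other hand, the hypothesis $\norm{u}_{\mathcal X_{\alpha,\beta}(\Omega_R)}\leq 1$ (applied for some fixed $\beta>0$) gives
\[
	\norm{v}_{C^\alpha(B_2\setminus B_1)} = r^{-1}\norm{u\circ S_r}_{C^\alpha(B_2\setminus B_1)} \leq r^{-1-\beta}.
\]
Choosing $R$ larger than a universal constant, $\psi_r$ becomes arbitrarily close to the identity in $C^\alpha$; in particular $\psi_r(B_2\setminus B_1)\subset B_3\setminus B_{1/2}$ and $\norm{\psi_r}_{C^\alpha(B_2\setminus B_1)}\leq 2$.

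Finally I would apply the standard estimate that composition is continuous on H\"older spaces over bounded domains: for $g\in C^\alpha(U)$ and $\psi\in C^\alpha(V;U)$ with $U,V$ bounded, $\norm{g\circ\psi}_{C^\alpha(V)}$ is controlled by a polynomial expression in $\norm{g}_{C^\alpha(U)}$ and $\norm{\psi}_{C^\alpha(V)}$. Taking $g=\tilde{f}_r$, $\psi=\psi_r$, $U=B_3\setminus B_{1/2}$, $V=B_2\setminus B_1$ yields the required uniform bound, completing the proof. The only slightly delicate point is the exact form of the composition estimate when $\alpha$ is a non-integer larger than $1$, where one must invoke Fa\`a di Bruno and then use the uniform $C^k$ control of $\tilde{f}_r$ established above; I expect this bookkeeping to be the main (but essentially routine) technical obstacle.
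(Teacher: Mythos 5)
Your proof is correct and follows essentially the same route as the paper: rescale via $S_r$, write $f\circ\varphi\circ S_r = (f\circ S_r)\circ(S_r^{-1}\circ\varphi\circ S_r)$, observe that the decay hypothesis makes $r^l(f\circ S_r)$ uniformly bounded in $C^\alpha$ on a slightly larger fixed annulus, and that the rescaled map $\psi_r$ is uniformly $C^\alpha$-close to the identity, then invoke the standard H\"older composition estimate. The paper's proof is just a terser version of the same argument (with the image annulus $B_4\setminus B_{1/2}$ instead of your $B_3\setminus B_{1/2}$, a purely cosmetic difference), and it likewise leaves the Fa\`a-di-Bruno bookkeeping implicit.
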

\begin{proof}
	For $r\geq R$, we need to bound
	\[
		r^l \norm{f\circ \varphi \circ S_r}_{C^\alpha(B_2\setminus B_1)}.
	\]
	By the assumption of $f$,
	\[
		r^l \norm{f\circ S_r}_{C^\alpha(B_2\setminus B_1)}\leq C \qquad \text{for all} \quad r>R/2.
	\]
	Since $f\circ \varphi\circ S_r = f\circ S_r \circ S_r^{-1}\circ \varphi \circ S_r$, it remains to show
	\begin{equation}
		\label{eqn:srphi}
		S_r^{-1}\circ \varphi\circ S_r: B_2\setminus B_1 \to B_{4}\setminus B_{1/2}
	\end{equation}
	is bounded in $C^\alpha$. In fact, we have
	\[
		S_r^{-1}\circ \varphi \circ S_r (x)= x+ r^{-1} u(r x)
	\]
	and \eqref{eqn:srphi} follows from our assumption for $u$, in particular,
	\[
		\norm{u\circ S_r}_{C^\alpha(B_2\setminus B_1)}\leq 1.
	\]
\end{proof}

\subsection{Poisson equation on $\Omega_R$}
\label{sub:poisson}

The aim of this subsection is to prove
\begin{thm}
	\label{thm:linear} 
	There is a bounded linear map
	\[
		\Psi: \mathcal X_{\alpha-2,\beta+2}(\Omega_R) \to \mathcal X_{\alpha,\beta}(\Omega_R)
	\]
	such that
	\[
		\triangle \Psi(f) = f \qquad \text{on} \quad \Omega_R
	\]
	for any $f\in \mathcal X_{\alpha-2,\beta+2}(\Omega_R)$. Moreover, there is a constant $C$ independent of $R$ such that
	\[
		\norm{\Psi(f)}_{\mathcal X_{\alpha,\beta}(\Omega_R)}\leq C \norm{f}_{\mathcal X_{\alpha-2,\beta+2}(\Omega_R)}.
	\]
\end{thm}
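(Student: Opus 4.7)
My plan is to construct the right inverse $\Psi$ of $\Delta$ by decomposing $f$ in spherical harmonics on $S^{m-1}$, solving one ODE per mode, and assembling the pieces. Given $f\in\mathcal X_{\alpha-2,\beta+2}(\Omega_R)$, on each sphere of radius $r>R$ write
\[
  f(r\omega)=\sum_{k\geq 0}f_k(r,\omega),
\]
where $f_k(r,\cdot)$ is the projection onto the $k$-th eigenspace of $-\Delta_{S^{m-1}}$, with eigenvalue $\lambda_k=k(k+m-2)$. The regularity condition built into $\mathcal X_{\alpha-2,\beta+2}$, together with Remark \ref{rem:kalf}, ensures that this expansion converges uniformly in $C^{\alpha-2}$ and that each projection is controlled by the weighted norm of $f$.

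In polar coordinates $\Delta=\partial_r^2+\tfrac{m-1}{r}\partial_r+\tfrac{1}{r^2}\Delta_{S^{m-1}}$, so $\Delta u_k=f_k$ reduces, on each spherical mode, to an ODE in $r$ whose homogeneous solutions are $r^k$ and $r^{-(k+m-2)}$ (the two indicial roots). I would produce a particular solution by variation of parameters, choosing the integration limits according to whether $\beta$ is smaller or larger than the indicial exponent $k+m-2$ (for example, integrating from $r$ to $\infty$ against $s^{1-k}f_k(s,\omega)$, and from $R$ to $r$ against $s^{m+k-1}f_k(s,\omega)$, or vice versa). Since $\beta\notin\mathbb Z$ is assumed in the definition of $\mathcal X_{\alpha,\beta}$, one has $\beta\notin\{-k,\,k+m-2\}$ for every $k\geq 0$, so each integral converges and produces a pointwise bound
\[
  \|u_k(r,\cdot)\|_{L^\infty(S^{m-1})} \;\lesssim\; \frac{r^{-\beta}}{2k+m-2}\,\|f_k\|_{\mathcal X_{\alpha-2,\beta+2}(\Omega_R)}
\]
with a constant that is independent of both $r$ and $R$.

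Next I would upgrade the pointwise bound to the full weighted norm. Rescaling by $S_r$, the Laplacian transforms by $\Delta(v\circ S_r)=r^2(\Delta v)\circ S_r$, so applying interior Schauder estimates to $u_k\circ S_r$ on the fixed annulus $B_{5/2}\setminus B_{1/2}$ gives
\[
  \|u_k\circ S_r\|_{C^\alpha(B_2\setminus B_1)}
  \leq C\Big(\|u_k\circ S_r\|_{C^0(B_{5/2}\setminus B_{1/2})} + r^2\|f_k\circ S_r\|_{C^{\alpha-2}(B_{5/2}\setminus B_{1/2})}\Big),
\]
with $C$ independent of $r$ and $R$. Combined with the ODE bound above, this yields $\|u_k\|_{\mathcal X_{\alpha,\beta}}\leq C_k\|f_k\|_{\mathcal X_{\alpha-2,\beta+2}}$ with $C_k$ growing at most polynomially in $k$. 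Setting $\Psi(f):=\sum_k u_k$, the polynomial dependence on $k$ is absorbed by the decay of the spherical harmonic coefficients of a $C^{\alpha-2}$ function on $S^{m-1}$ (valid by Remark \ref{rem:kalf}), yielding the advertised bound with constant independent of $R$.

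\textbf{Main obstacle.} The delicate point is uniformity in the spherical mode $k$: the denominators $2k+m-2$ in the variation-of-parameters formula help, but as $k$ increases, some modes have their indicial root $k+m-2$ close to $\beta$, and the associated integrals become ``near-resonant''. For fixed $\beta$ this happens only for finitely many $k$, so the remaining constants are controlled; the assumption $\beta\notin\mathbb Z$ precludes exact resonances. The $R$-independence of the final constant is automatic from the construction, since both the ODE integration limits and the Schauder rescaling treat the region $r\geq R$ uniformly and no boundary condition is imposed at $\partial\Omega_R$.
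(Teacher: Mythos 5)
Your construction is sound and takes a genuinely different route from the paper. The paper's proof is ``global'': it extends $f$ to a function $\tilde f$ on all of $\Real^m$ with the same pointwise decay $\abs{x}^{-(\beta+2)}$, convolves with the Newtonian potential to get a solution of $\triangle u=\tilde f$ on $\Real^m$, then restricts to $\Omega_R$ and uses scaled interior elliptic estimates to pass from the $C^0$ weight bound to the $\mathcal X_{\alpha,\beta}$ bound. Your approach is ``separated'': decompose in spherical harmonics and solve a Euler-type ODE in $r$ per mode, with integration limits chosen according to whether $\beta$ lies below or above the indicial root $k+m-2$, and then resum. Your route has the advantage of handling all weights $\beta>0$, $\beta\notin\mathbb Z$, in one stroke; the paper's Newtonian-potential approach, as literally written, only produces the decay $\abs{x}^{-\min(\beta,m-2)}$ and implicitly requires subtracting the first few terms of the multipole expansion when $\beta>m-2$ (which does occur in the applications in Section 5), a step the paper does not spell out.

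There is one step in your plan that as written would not go through cleanly: the resummation. You apply the rescaled interior Schauder estimate to each $u_k$ separately and then sum, which would require a bound on $\sum_k\norm{f_k\circ S_r}_{C^{\alpha-2}(B_{5/2}\setminus B_{1/2})}$ in terms of $\norm{f\circ S_r}_{C^{\alpha-2}}$. That is not available: by Bernstein-type inequalities on the sphere, $\norm{f_k}_{C^{\alpha-2}(S^{m-1})}\sim k^{\alpha-2}\norm{f_k}_{L^\infty(S^{m-1})}$, which exactly cancels the gain you hope to absorb, so the series of $C^{\alpha-2}$ norms need not converge even when the $C^0$ one does (the Kalf reference in Remark~\ref{rem:kalf} only gives uniform, i.e.\ $C^0$, convergence). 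The standard fix preserves your strategy: sum the per-mode \emph{$C^0$} bounds (for which the $1/(2k+m-2)$ factor together with the uniform-convergence input suffices), obtaining a function $u=\sum_k u_k$ with the weighted $C^0$ bound $\abs{u}\lesssim r^{-\beta}$ that solves $\triangle u=f$ distributionally on $\Omega_R$; then apply the scaled interior Schauder estimate a single time to $u$ and $f$, not to each mode, to get $\norm{u\circ S_r}_{C^\alpha(B_2\setminus B_1)}\lesssim\norm{u\circ S_r}_{C^0(B_{5/2}\setminus B_{1/2})}+r^2\norm{f\circ S_r}_{C^{\alpha-2}(B_{5/2}\setminus B_{1/2})}$. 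This yields the claimed bound with a constant independent of $R$, since neither the ODE integration limits nor the Schauder estimate on the unit annulus sees $R$.

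Finally, your ``main obstacle'' paragraph misidentifies the delicate point. There are no near-resonances to worry about: for fixed $\beta\notin\mathbb Z$ the distance $\abs{\beta-(k+m-2)}$ is bounded below and actually grows with $k$, so the denominators are uniformly controlled. The real subtlety is the resummation in the H\"older norm discussed above.
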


\begin{proof}[Proof of Theorem \ref{thm:linear}]
	This is a standard weighted estimate for the Poisson equation on an exterior domain. Since $\beta\notin\mathbb Z$, write $\beta=\rho+\varepsilon$, where $\rho\in\mathbb Z_{\geq 0}$ and $\varepsilon\in(0,1)$. Lemma 5 of \cite{meyers1963} gives a linear particular solution with decay $O(\abs{x}^{-\beta})$, while Theorem 1 therein, together with the usual interior Schauder estimates on dyadic annuli, gives
	\[
		\norm{u}_{\mathcal X_{\alpha,\beta}(\Omega_R)}
		\leq C
		\norm{f}_{\mathcal X_{\alpha-2,\beta+2}(\Omega_R)}.
	\]
	The constant is independent of $R$ by dilation.
\end{proof}

\subsection{Proof of Theorem \ref{thm:pde}}

Since $\tilde{n}$ is assumed to be larger than $2$, we may assume without loss of generality that $\varepsilon$ is very small so that we may set $\beta=\tilde{n}-\varepsilon>2$ and choose $\beta'\in (\beta,\tilde{n})$. Assume that $\beta$ and $\beta'$ are not integers. 
For simplicity of notation, we set
\[
	F[u]_i:= \Gamma^i_{jk}(x+u(x)) \left( \delta_{ja}+\pfrac{u_j}{x_a} \right)\left( \delta_{ka}+\pfrac{u_k}{x_a} \right).
\]

\begin{lem}\label{lem:Fu}
	For any $\delta>0$, there is $R$ sufficiently large (depending on the metric and $\delta$, $\beta'$, $\tilde{n}$) so that for any $u\in \mathcal X_{\alpha,\beta-2}(\Omega_R)$ with $\norm{u}_{\mathcal X_{\alpha,\beta-2}(\Omega_R)}\leq 1$, we have
	\[
		\norm{F[u]}_{\mathcal X_{\alpha-1,\beta'}(\Omega_{R})}\leq \delta.
	\]
\end{lem}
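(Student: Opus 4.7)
To prove Lemma \ref{lem:Fu}, the plan is to expand $F[u]$ using the symmetry of the Christoffel symbols in their lower indices into three pieces,
\[
F[u]_i = \Gamma^i_{aa}(x+u(x)) + 2\Gamma^i_{ak}(x+u(x))\,\partial_a u_k + \Gamma^i_{jk}(x+u(x))\,\partial_a u_j\,\partial_a u_k,
\]
and estimate each piece in a weighted H\"older space $\mathcal X_{\alpha-1,\gamma}(\Omega_R)$ whose weight $\gamma$ strictly exceeds $\beta'$. The target bound in $\mathcal X_{\alpha-1,\beta'}(\Omega_R)$ then follows from the elementary inclusion $\mathcal X_{\alpha-1,\gamma}(\Omega_R)\hookrightarrow \mathcal X_{\alpha-1,\beta'}(\Omega_R)$ with operator norm at most $R^{\beta'-\gamma}$ whenever $\gamma>\beta'$, so choosing $R$ large enough will drive the norm below any prescribed $\delta>0$.

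First I would collect the relevant decay rates for the Christoffel symbols. Differentiating \eqref{eqn:almostflat} gives $\partial g\in O(\abs{x}^{-(\tilde n-1)})$; for $R$ sufficiently large, $g^{ij}-\delta^{ij}$ inherits the decay of $g_{ij}-\delta_{ij}$, and therefore $\Gamma^i_{jk}(x)=O(\abs{x}^{-(\tilde n-1)})$. The almost-harmonic hypothesis \eqref{eqn:hmapprox} sharpens the trace to $\Gamma^i_{aa}(x)=O(\abs{x}^{-\tilde n})$. Since $\norm{u}_{\mathcal X_{\alpha,\beta-2}(\Omega_R)}\le 1$ and $\beta-2>0$, Lemma \ref{lem:composit} applies and yields, for $R$ large,
\[
\norm{\Gamma^i_{jk}(\,\cdot+u(\cdot))}_{\mathcal X_{\alpha-1,\tilde n-1}(\Omega_R)}\le C,\qquad \norm{\Gamma^i_{aa}(\,\cdot+u(\cdot))}_{\mathcal X_{\alpha-1,\tilde n}(\Omega_R)}\le C.
\]
The derivative inequality \eqref{eqn:partial} further gives $\norm{\partial_a u_k}_{\mathcal X_{\alpha-1,\beta-1}(\Omega_R)}\le C$.

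Next I would combine these ingredients through the product estimate \eqref{eqn:multi}. The first term of $F[u]$ lies in $\mathcal X_{\alpha-1,\tilde n}(\Omega_R)$, the cross term in $\mathcal X_{\alpha-1,\tilde n+\beta-2}(\Omega_R)$, and the quadratic term in $\mathcal X_{\alpha-1,\tilde n+2\beta-3}(\Omega_R)$, all with a bound independent of $R$. Because $\beta>2$ and $\beta'<\tilde n$, each of the three weight exponents strictly exceeds $\beta'$; the embedding remark above then bounds each piece by $CR^{-\kappa}$ in $\mathcal X_{\alpha-1,\beta'}(\Omega_R)$ for some $\kappa>0$ depending only on $\tilde n$, $\beta$ and $\beta'$. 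Summing the three contributions and taking $R$ large enough yields $\norm{F[u]}_{\mathcal X_{\alpha-1,\beta'}(\Omega_R)}\le \delta$.

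The main obstacle is really just the exponent bookkeeping: one must verify that all three weights $\tilde n$, $\tilde n+\beta-2$, $\tilde n+2\beta-3$ strictly exceed $\beta'$, which reduces to the two conditions $\beta>2$ and $\beta'<\tilde n$ built into the setup of Theorem \ref{thm:pde}. A minor technical point is that Lemma \ref{lem:composit} requires the outer function to be defined on $\Omega_{R/2}$, which forces $g$ and its Christoffel symbols to be available on a slightly larger annulus with the same decay; this is harmless, since one can always enlarge $R$ at the start so that everything is defined on $\Omega_{R/2}$ and the stated decay estimates persist there.
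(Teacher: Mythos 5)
Your proposal is correct and follows essentially the same argument as the paper: expand $F[u]$ into the three Christoffel terms, apply Lemma \ref{lem:composit} to the composed Christoffel symbols and \eqref{eqn:partial}/\eqref{eqn:multi} to the $\partial u$ factors, and then exploit the embedding $\mathcal X_{\alpha-1,\gamma}(\Omega_R)\hookrightarrow\mathcal X_{\alpha-1,\beta'}(\Omega_R)$ with operator norm $R^{\beta'-\gamma}$ to gain smallness for $R$ large. The only cosmetic difference is that the paper first bounds all of $F[u]$ uniformly in $\mathcal X_{\alpha-1,\tilde n}$ and applies the embedding once, whereas you track the (in fact stronger) weights of the cross and quadratic terms separately, which changes nothing.
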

\begin{proof}
	The proof consists of two steps.

	{\bf Step 1.}
	There exists some constant $C$ such that
	\begin{equation}
		\label{eqn:step1}
		\norm{F[u]}_{\mathcal X_{\alpha-1,\tilde{n}}(\Omega_{R})}\leq C.
	\end{equation}

	For the proof, we compute
	\[
		F[u]_i = \Gamma^i_{aa}(x+u(x)) + 2 \Gamma^i_{ja}(x+u(x)) \pfrac{u_j}{x_a}  + \Gamma^i_{jk}(x+u(x)) \pfrac{u_j}{x_a} \pfrac{u_k}{x_a}.
	\]
	Given the assumptions of Theorem \ref{thm:pde}, we have
	\[
		\Gamma^i_{jk}(x) = O(\abs{x}^{-(\tilde{n}-1)}), \qquad \Gamma^i_{aa}(x) = O(\abs{x}^{-\tilde{n}}).
	\]
	Lemma \ref{lem:composit} implies that
	\[
		\norm{\Gamma^i_{jk}(x+u(x)) }_{\mathcal X_{\alpha,\tilde{n}-1}(\Omega_R)}\leq C
	\]
	and
	\[
		\norm{\Gamma^i_{aa}(x+u(x))}_{\mathcal X_{\alpha,\tilde{n}}(\Omega_R)}\leq C.
	\]
	Using \eqref{eqn:partial} and the assumption of $u$, we have
	\[
		\norm{\pfrac{u_j}{x_a}}_{\mathcal X_{\alpha-1,\beta-1}(\Omega_R)}\leq C.
	\]
	Noting that $\beta-1>1$, we can prove \eqref{eqn:step1} by using the multiplication rule in \eqref{eqn:multi}.

	{\bf Step 2.}
	With \eqref{eqn:step1}, the proof of the lemma is done by taking $R$ large (depending on $\delta$ and the constant $C$ in \eqref{eqn:step1}) in the following inequality
	\begin{equation*}
		\norm{F[u]}_{\mathcal X_{\alpha-1,\beta'}(\Omega_{R})}\leq \norm{F[u]}_{\mathcal X_{\alpha-1,\tilde{n}}(\Omega_{R})} (R)^{(\beta'-\tilde{n})}.
	\end{equation*}
	The above inequality is a direct consequence of the definition of the weighted norm. In fact, for any function $w\in \mathcal X_{\alpha-1,\tilde{n}}(\Omega_{R})$, we have
	\begin{eqnarray*}
		&& \norm{w}_{\mathcal X_{\alpha-1,\beta'}(\Omega_{R})} \\
		&=&\sup_{r\geq R} r^{\beta'} \norm{w\circ S_r}_{C^{\alpha-1}(B_2\setminus B_1)} \\
		&\leq& (R)^{\beta'-\tilde{n}}\sup_{r\geq R} r^{\tilde{n}} \norm{w\circ S_r}_{C^{\alpha-1}(B_2\setminus B_1)} \\
		&=& (R)^{\beta'-\tilde{n}} \norm{w}_{\mathcal X_{\alpha-1,\tilde{n}}(\Omega_R)}.
	\end{eqnarray*}
\end{proof}

We now turn to the proof of Theorem \ref{thm:pde}. 

For the constant $C$ in Theorem \ref{thm:linear}, choose $\delta=\frac{1}{C}$. Let $R'$ be the $R$ given by Lemma \ref{lem:Fu}. Consider the unit ball $\mathcal B$ in the Banach space $\mathcal X_{\alpha,\beta-2}(\Omega_{R'})$. For each $u\in \mathcal B$, Lemma \ref{lem:Fu} implies that
\[
	\norm{F[u]}_{\mathcal X_{\alpha-1,\beta'}(\Omega_{R'})}\leq \delta.
\]
Theorem \ref{thm:linear} gives some $v\in \mathcal X_{\alpha+1,\beta'-2}(\Omega_{R'})$ satisfying
\[
	\norm{v}_{\mathcal X_{\alpha+1,\beta'-2}(\Omega_{R'})}\leq 1.
\]
Consider the natural embedding
\[
	\iota: \mathcal X_{\alpha+1,\beta'-2}(\Omega_{R'})\to \mathcal X_{\alpha,\beta-2}(\Omega_{R'}).
\]
For $\beta'>\beta$, $\iota$ is a compact linear operator satisfying
\[
	\norm{\iota(w)}_{\mathcal X_{\alpha,\beta-2}(\Omega_{R'})}\leq \norm{w}_{\mathcal X_{\alpha+1,\beta'-2}(\Omega_{R'})}.
\]
By defining
\[
	u\mapsto - \iota\circ \Psi (F[u]),
\]
we obtain a compact operator from $\mathcal B$ to itself. Then the Schauder fixed point theorem implies the existence of $u$ in $\mathcal B$ with
\[
	\triangle u = - F[u].
\]

\section{Expansion of metrics at a smooth point}\label{sec:smooth}

In this section, we study special coordinates around a smooth point of any smooth Riemannian manifold. We {\it do not claim} any result of this section to be new. Indeed, the first part of Theorem \ref{thm:known} is well documented in any Riemannian geometry textbooks and the proof we used here is nothing but the one outlined by Bryant in \cite{web} with the algebraic details worked out. The second claim in Theorem \ref{thm:known} is rather routine in PDE. It shows that we can have a harmonic/Bianchi coordinate that is as good as the normal coordinate in the sense that we have the same expansion of the metric.

\subsection{Definitions and results}

Given a smooth Riemannian manifold, around a fixed point, {\it the geodesic coordinate} is defined via exponential map and it has many advantages, in particular, the metric tensor has an expansion in this coordinate where the Riemannian curvature tensor appears as the coefficients of the second order term. 
\begin{equation}
	\label{eqn:exp_geodesic}
	g_{ij}= \delta_{ij}- \frac{1}{3}R_{ipqj}x_p x_q + \cdots .
\end{equation}
Besides this, one can define a {\it harmonic coordinate} in a neighborhood, where the coordinates are harmonic functions with respect to the metric. The advantage of this harmonic coordinate is that in this coordinate, the leading term of Ricci curvature is nothing but the Laplacian of the metric tensor (see for example \cite{deturk})
\[
	R_{ij}= -\frac{1}{2}\triangle g_{ij} + \cdots 
\]
It is natural to ask if there is any coordinate that has both the features mentioned above. 

There is also a notion of {\it Bianchi coordinate}, which is defined by
\begin{equation}
	\label{eqn:bianchicoordinate}
	B_{g_e} g = 0
\end{equation}
where $g_e$ is the flat metric defined by the coordinates and $B_g(h)=\delta(h-\frac{1}{2}({\rm Tr}_g h) g)$ for any symmetric $2$-tensor $h$. This coordinate serves a similar purpose as harmonic coordinate in making the Ricci equation elliptic, while its definition allows global considerations. It is widely used in the study of deformation of Einstein metrics.
We have the following interpretation of the Bianchi coordinate
\begin{lem}
	\label{lem:bianchiharmonic}
	Assume that $(M,g)$ is an $m$-dimensional Riemannian manifold and $\bx \in M$, $U$ is a neighborhood on which $\varphi:B\subset \Real^m\to U$ defines a coordinate system. Then it is Bianchi if and only if $\varphi$ is a harmonic map from $B$ (with the flat metric of $\Real^m$) to $(U,g)$.
\end{lem}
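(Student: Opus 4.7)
\textbf{Proof plan for Lemma \ref{lem:bianchiharmonic}.}

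The plan is to verify the equivalence by a direct coordinate computation, exploiting the fact that we are free to work in the coordinates $(y^i) = (\varphi^i)$ on $U$ that are induced by $\varphi$ itself. In these coordinates, the map $\varphi: B \to U$ is represented by $y^i = x^i$, the flat metric $g_e$ has components $\delta_{ij}$, and its Christoffel symbols vanish. Thus both the Bianchi condition \eqref{eqn:bianchicoordinate} and the harmonic map equation become simple differential identities on the components $g_{ij}(y)$.

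First I would compute the tension field. With the source metric flat and the map being the identity in $(x^i) \leftrightarrow (y^i)$, all second derivatives of $\varphi$ vanish and $\partial_a \varphi^j = \delta_a^j$, so
\[
	\tau(\varphi)^k = \delta^{ab}\bigl(\partial_a\partial_b \varphi^k + \Gamma^k_{ij}(g)\,\partial_a\varphi^i\,\partial_b\varphi^j\bigr) = \sum_a \Gamma^k_{aa}(g).
\]
Using the standard formula for Christoffel symbols,
\[
	\sum_a \Gamma^k_{aa}(g) = g^{kl}\Bigl(\partial_a g_{al} - \tfrac{1}{2}\partial_l g_{aa}\Bigr).
\]

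Next I would expand the Bianchi operator in the same coordinates. Since $g_e = \sum_i dy^i\otimes dy^i$ has vanishing Christoffel symbols, for the symmetric two-tensor $g$ we have
\[
	(\delta_{g_e} g)_l = \partial_a g_{al}, \qquad \mathrm{Tr}_{g_e} g = \sum_a g_{aa},
\]
so that
\[
	(B_{g_e}g)_l = \partial_a g_{al} - \tfrac{1}{2}\partial_l g_{aa}.
\]
Combining the two displays,
\[
	\tau(\varphi)^k = g^{kl}\,(B_{g_e}g)_l.
\]
Since $g^{kl}$ is invertible, the tension field vanishes identically if and only if $B_{g_e}g = 0$, proving the equivalence.

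There is essentially no obstacle of substance; the only thing to be careful about is the sign/normalization convention for $\delta_{g_e}$ used in the definition of $B_{g_e}$, which must be chosen consistently with the formula for $B_{g_e}g$ stated in Theorem \ref{thm:main1}(I). Once the conventions are pinned down, the argument is a one-line manipulation of the Christoffel formula. I would present it in roughly half a page, with the two computations above as the display equations and a single sentence invoking invertibility of $g^{kl}$ to conclude.
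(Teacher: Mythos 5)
Your proposal is correct and follows essentially the same route as the paper: both compute $\Gamma^k_{aa}(g) = g^{kl}(\partial_a g_{al}-\tfrac12\partial_l g_{aa})$ in the coordinates induced by $\varphi$, identify the parenthesized expression with $(B_{g_e}g)_l$ up to sign, and conclude via invertibility of $g^{kl}$. The sign issue you flag is harmless for the equivalence, exactly as you note.
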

\begin{proof}
	If we write $\tilde{g}$ for $\varphi^*(g)$ as a metric on $B$, the Bianchi coordinate assumption means
	\begin{equation}
		\label{eqn:bianchimeans}
		- \partial_j \tilde{g}_{ij} + \frac{1}{2} \partial_i (\sum_j \tilde{g}_{jj})=0.
	\end{equation}
	Let $\Gamma^i_{jk}$ be the Christoffel symbols of the Levi-Civita connection of $\tilde{g}$. 
	For the identity map to be a harmonic map from $(B,g_e)$ to $(B,g)$, we need
	\begin{equation}
		\label{eqn:beingharmonic}
		\triangle_{g_e} x^i + \Gamma^i_{jk} \pfrac{x_j}{x_l}\pfrac{x_k}{x_l}=0,
	\end{equation}
	which is equivalent to 
	\[
		\Gamma^i_{jj}=0.
	\]
	By the definition of $\Gamma$,
	\[
		\Gamma^i_{jj}= g^{il}\left( \pfrac{g_{jl}}{x_j} - \frac{1}{2}\pfrac{g_{jj}}{x_l} \right).
	\]
	Since $g^{il}$ is invertible, we find that \eqref{eqn:beingharmonic} is equivalent to \eqref{eqn:bianchimeans}.
\end{proof}
\begin{rem}
	There is another way of looking at Lemma \ref{lem:bianchiharmonic}.
	Since the map $\varphi$ is a diffeomorphism, then a variation of $\varphi$ is equivalent (by the invariance of Dirichlet energy under the action of diffeomorphism group) to a variation from the domain. Hence, in the terminology of harmonic maps, for such maps, being harmonic is equivalent to being stationary harmonic. The divergence free property of the Stress-energy tensor in this case is exactly the same as the Bianchi condition \eqref{eqn:bianchicoordinate}. 
\end{rem}

The main result of this section is
\begin{thm}\label{thm:known}
	Let $(M,g)$ be any Riemannian manifold of dimension $m$ and $\bx \in M$ be any fixed point. Then there is a coordinate around $\bx$ such that
	\[
		g_{ij}= \delta_{ij}- \frac{1}{3}R_{ipqj}x_p x_q + o(\abs{x}^2),
	\]
	where $R_{ipqj}$ is the Riemannian curvature tensor at $\bx$.
	Moreover, if $Ric(\bx)=0$, the above coordinate can be chosen so that it is harmonic or Bianchi.
\end{thm}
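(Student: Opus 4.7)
The plan is to carry out the Bryant-style iteration sketched in the introduction, using the isomorphism of Proposition \ref{prop:psi1} and the decomposition of Proposition \ref{prop:psi2} to systematically kill the unwanted parts of the Taylor expansion of $g$ at $\bx$. After the polynomial normalization, a local PDE perturbation promotes the coordinate to a harmonic or Bianchi one; the condition $\mathrm{Ric}(\bx)=0$ is precisely what allows this last step to preserve the quadratic expansion.

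Fixing an arbitrary smooth chart sending $\bx$ to the origin, I Taylor expand
\[
g_{ij}(x) = a_{ij} + b_{ij,k}x_k + c_{ij,kl}x_k x_l + O(|x|^3),
\]
with $[a_{(12)}]\in S^2(\Real^m)$, $[b_{(123)}]\in S^2(\Real^m)\otimes\Real^m$, and $[c_{(1234)}]\in S^2(\Real^m)\otimes S^2(\Real^m)$. A linear change diagonalizes $a$ to $\delta$. A quadratic change $\varphi_i(x) = x_i + A_{jk,i}x_jx_k$ with $A \in S^2(\Real^m)\otimes\Real^m$ then shifts the linear coefficient by $\Psi_1(A)$ (a routine chain-rule computation on $\varphi^*g$), and by Proposition \ref{prop:psi1} a unique $A$ kills $b$. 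Now $g_{ij}(x) = \delta_{ij} + c_{ij,kl}x_kx_l + O(|x|^3)$ for an updated $c$. A cubic change $\varphi_i(x) = x_i + B_{i,jkl}x_jx_kx_l$ with $B \in \Real^m\otimes S^3(\Real^m)$ shifts $c$ by $\Psi_2(B)$; Proposition \ref{prop:psi2} decomposes $c = \tilde{\mathbf R}(c) + \Psi_2(B_0)$ uniquely, so choosing $B=-B_0$ leaves the quadratic coefficient in $\widetilde{\mathcal C}$. Because the Riemann curvature at the origin is a point invariant under all the coordinate changes used, $\mathbf R(c) = R(\bx)$; a direct comparison of the definition of $\tilde{\mathbf R}$ with the curvature symmetries identifies $\tilde{\mathbf R}(c)_{(ijpq)}x_px_q$ with the $(p,q)$-symmetrization of $-\tfrac{1}{3}R_{ipqj}x_px_q$, which is the asserted expansion.

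For the harmonic/Bianchi refinement assuming $\mathrm{Ric}(\bx)=0$, I seek $y^i = x^i + u^i(x)$ with $u^i = O(|x|^3)$; this automatically preserves the quadratic expansion. By Lemma \ref{lem:bianchiharmonic}, being Bianchi is equivalent to the coordinate map being a harmonic map into $(M,g)$, so in either case the equation for $u^i$ is an elliptic system whose inhomogeneous part is $-g^{jk}\Gamma^i_{jk}(x)$ (for harmonic coordinates) or $B_{g_e}(g)(x)$ (for Bianchi). A direct computation in the good coordinate constructed above shows that both of these expressions have leading term proportional to $\mathrm{Ric}_{il}(\bx)x_l$, so the hypothesis $\mathrm{Ric}(\bx)=0$ reduces the inhomogeneity to $O(|x|^2)$. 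The desired correction $u^i = O(|x|^3)$ is then produced by first matching Taylor coefficients order by order to construct a formal power series solution, and then closing the argument by a contraction mapping on a small ball in a weighted H\"older space whose weight measures the order of vanishing at $0$, in direct analogy with the infinity-version developed in Section \ref{sec:analysis}. The main obstacle is precisely this last step: without the Ricci-vanishing hypothesis the leading obstruction is linear in $x$, forcing $u^i$ to contain a quadratic term that would alter the quadratic coefficient of $g$ and destroy the Riemann-curvature expansion.
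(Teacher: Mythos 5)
Your proposal follows essentially the same route as the paper: the iterative polynomial normalization via Propositions \ref{prop:psi1} and \ref{prop:psi2}, then a local PDE perturbation for the Bianchi/harmonic refinement, with the Ricci-vanishing hypothesis entering exactly where you say. One inaccuracy: to leave the quadratic coefficient of $g$ unchanged under $y = x + u(x)$, you need $u = O(\abs{x}^4)$ rather than $O(\abs{x}^3)$, since a cubic term in $u$ contributes $\partial u = O(\abs{x}^2)$ to the pullback and would alter the curvature term; your own observation that the inhomogeneity is $O(\abs{x}^2)$ is precisely what makes the Taylor-matching produce $u = O(\abs{x}^4)$, and this is what the paper codifies by applying Lemma \ref{lem:pde} with $u_0=0$ and $k=1$.
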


\subsection{Proof of Theorem \ref{thm:known}}

For a vector-valued function $u: B(0,\varepsilon)\subset \Real^m\to \Real^m$, consider the equation
\begin{equation}
	\label{eqn:pde}
	\triangle u = F(x,u,\nabla u)
\end{equation}
where $F$ is a smooth function in its arguments.  We are interested in the ``local solvability", which is the topic of Section 14.3 of \cite{taylorbook}. Note that the author discussed more general elliptic equation of order $m$.

The $k=0$ case of the following result is exactly Proposition 3.3 (for our equation) in \cite{taylorbook}, which was presented as a ``refinement" of the local solvability theorem (Theorem 3.1 therein). As an interesting historic note, the author remarked after Proposition 3.3: ``The desirability of having this refinement was pointed out to the author by R. Bryant."
\begin{lem}
	\label{lem:pde} If $u_0$ is a smooth function on $B(0,\varepsilon)$ that satisfies \eqref{eqn:pde} at $0$ up to order $k$, namely,
	\[
		\left( \triangle u_0 - F(x,u_0,\nabla u_0) \right)^{(l)}(0)=0
	\]
	for $l=0,1,\cdots ,k$. Then there is a solution $u$ to \eqref{eqn:pde} defined on some smaller neighborhood $B(0,\delta)$ such that
	\[
		(u-u_0)^{(l)}(0)=0
	\]
	for any $l=0,1,\cdots ,k+2$.
\end{lem}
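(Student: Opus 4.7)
The plan is to pose a rescaled Dirichlet problem on the unit ball and apply an implicit function theorem with the boundary data as a tunable parameter, thereby generalizing the $k=0$ argument of Taylor's Proposition 3.3. Setting $v = u - u_0$ converts the PDE into $\triangle v = G(x,v,\nabla v)$, where $G(x,0,0) = F(x,u_0,\nabla u_0) - \triangle u_0$ vanishes at $0$ to order $k$ by hypothesis, and the goal becomes finding $v$ with $v^{(l)}(0) = 0$ for $l \le k+2$. Rescale $v(x) = \epsilon^{k+3} v_\epsilon(x/\epsilon)$ on $B(0,1)$; the equation becomes $\triangle v_\epsilon = R_\epsilon(y, v_\epsilon, \nabla v_\epsilon)$, degenerating as $\epsilon \to 0$ to the linear Poisson problem $\triangle v_0 = g_{k+1}(y)$, where $g_{k+1}$ is the homogeneous degree-$(k+1)$ Taylor part of $G(\cdot, 0, 0)$.

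Next I would pose the Dirichlet problem on $B(0,1)$ with boundary data $\phi$, using standard Schauder theory (or a contraction argument in $C^{2,\alpha}$) to produce, for each small $\epsilon$ and each admissible $\phi$, a unique smooth solution $v_\epsilon(\phi)$ depending smoothly on $(\epsilon, \phi)$. Define the finite-dimensional map
\[
\Psi(\epsilon, \phi) := \bigl(\partial^\alpha v_\epsilon(\phi)(0)\bigr)_{|\alpha| \le k+2}.
\]
Because $g_{k+1}$ is homogeneous of degree $k+1$, expanding the PDE order by order at the origin shows that the Taylor coefficients of $v_0$ at $0$ up to order $k+2$ are automatically traceless symmetric. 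On this traceless target, $D_\phi \Psi|_{\epsilon=0}$ is the linear map sending boundary data to the origin Taylor coefficients of its harmonic extension, which is surjective by the standard correspondence between spherical harmonics on $S^{m-1}$ and harmonic polynomials on $\Real^m$. Choose $\phi_0$ with $\Psi(0, \phi_0) = 0$ and apply the finite-dimensional implicit function theorem in $\phi$ to produce $\phi_\epsilon$ smooth in $\epsilon$ with $\Psi(\epsilon, \phi_\epsilon) = 0$ for all sufficiently small $\epsilon > 0$. Unscaling gives $v(x) = \epsilon^{k+3} v_\epsilon(\phi_\epsilon)(x/\epsilon)$ on $B(0,\epsilon)$, solving the original PDE with $v^{(l)}(0) = 0$ for $l \le k+2$.

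The main technical obstacle I expect is the verification that $D_\phi \Psi|_{\epsilon=0}$ is surjective onto the correct target of traceless symmetric tensors at each order $\le k+2$: one must track which parts of $v_\epsilon^{(l)}(0)$ are pinned down by the PDE (the trace parts, via the Laplacian applied to the degree-$(l+2)$ coefficients of $v$) and which are free (the traceless parts, adjustable via harmonic boundary modes on $\partial B(0,1)$). Once this algebraic bookkeeping is in place, the analytic step reduces to a routine implicit function argument in H\"older spaces, and the extra regularity needed to evaluate Taylor coefficients at the origin follows from elliptic interior regularity for the Dirichlet problem.
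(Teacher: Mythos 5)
Your rescaling-plus-implicit-function-theorem strategy is a genuinely different route from what the paper does: the paper does not give a self-contained argument but defers to Section 14.3 of Taylor's book, where the proof builds a formal power-series solution degree by degree, realizes it as the Taylor jet of a smooth function via Borel summation, and then solves the residual equation by a contraction argument; the paper also notes that the weighted-Schauder fixed-point scheme of its Section 3 could be adapted. Your blow-up to $B(0,1)$ and finite-dimensional implicit function theorem in the boundary data avoids the Borel-summation step and is elegant, but as written it has a real gap that cannot be waved away.

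The problem is the target space of $\Psi$. You want $\Psi(\epsilon,\phi_\epsilon)=0$ in the full jet space $\bigoplus_{l\le k+2}S^l(\Real^m)$ — that is what the conclusion of the lemma requires. But $D_\phi\Psi|_{\epsilon=0}$ sends $\psi$ to the Taylor jet at $0$ of the harmonic extension of $\psi$, and those jets live in the proper subspace $\bigoplus_{l\le k+2}H^l(\Real^m)$ of traceless symmetric tensors. So $D_\phi\Psi|_{\epsilon=0}$ is never surjective onto the full target, and the implicit function theorem can only deliver $\phi_\epsilon$ with $P_{\mathrm{tl}}\Psi(\epsilon,\phi_\epsilon)=0$, where $P_{\mathrm{tl}}$ is the projection onto the traceless part. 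For $\epsilon>0$ the jet of $v_\epsilon$ is not a priori traceless, so this is strictly weaker than what you need. You flag the trace/traceless bookkeeping as an obstacle, but you locate it in the wrong place: surjectivity onto the traceless target is the easy part (it is exactly the correspondence between spherical harmonics and harmonic polynomials). The missing step is to show that once the traceless jet of $v_\epsilon$ up to order $k+2$ vanishes, the trace parts vanish as well, for each $\epsilon>0$.

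That step is where the order-$k$ vanishing hypothesis actually enters, and it must be verified by induction. First observe that $R_\epsilon(y,0,0)=\epsilon^{-(k+1)}G(\epsilon y,0,0)$ still vanishes to order $k$ in $y$ at $y=0$, uniformly in $\epsilon$: the degree-$j$ Taylor coefficient of $G(\cdot,0,0)$ vanishes for $j\le k$, and rescaling by $\epsilon$ and dividing by $\epsilon^{k+1}$ does not change the vanishing order in $y$. Now induct on $l\le k+2$. Suppose $\nabla^j v_\epsilon(0)=0$ for $j<l$ and that $P_{\mathrm{tl}}\nabla^l v_\epsilon(0)=0$. Expanding $\nabla^{l-2}_y\bigl(R_\epsilon(y,v_\epsilon(y),\nabla v_\epsilon(y))\bigr)$ at $y=0$ by the chain rule, every term carries a factor $\nabla^j v_\epsilon(0)$ with $1\le j\le l-1$ and hence vanishes, except the pure $y$-derivative $\partial^{l-2}_y R_\epsilon(0,0,0)$, which vanishes because $l-2\le k$. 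Therefore ${\rm tr}_{12}\nabla^l v_\epsilon(0)=\nabla^{l-2}\triangle v_\epsilon(0)=0$, and combined with the vanishing traceless part and the orthogonal decomposition $S^l=H^l\oplus(\delta\odot S^{l-2})$ this gives $\nabla^l v_\epsilon(0)=0$. With this lemma in hand, running the implicit function theorem on the traceless target does prove the statement; without it, the argument as written does not.
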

The generalization from $k=0$ to general $k$ is routine. Since the initial approximate solution is better, the formal solution in Lemma 3.2 could be chosen better. Namely, $m+1$ in (3.13) could be improved to $m+1+k$. The rest of the argument could be modified accordingly. We refer the details of this proof to Section 14.3 of \cite{taylorbook}. We also note that a similar version of the weighted function space and an application of the Schauder fixed point theorem as in Section \ref{sec:analysis} could also provide a proof of Lemma \ref{lem:pde}.

Now, let's start the proof of Theorem \ref{thm:known}. 

Find any local coordinate around $\bx\in M$. Since the metric is smooth, there is the Taylor's expansion. By making a linear transformation, we may assume that
\[
	g_{ij}(x)= \delta_{ij}+ O(\abs{x}).
\]
By doing so, we have simplified the constant term in the expansion. Lemma \ref{lem:prealgebra} deals with the linear term.
\begin{lem}
	\label{lem:prealgebra} There is another smooth coordinate centered at $\bx$ such that
	\begin{equation}
		\label{eqn:prealgebra}
		g_{ij}= \delta_{ij}+ O(\abs{x}^2).
	\end{equation}
\end{lem}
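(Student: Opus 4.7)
The plan is to implement the strategy sketched in the introduction: apply a quadratic coordinate change and choose its coefficient tensor so that the linear term in the Taylor expansion of $g$ is killed, with the key algebraic step provided by Proposition~\ref{prop:psi1}.

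First, expand the metric in the given coordinate as
\[
g_{ij}(x) = \delta_{ij} + b_{ijk}\, x_k + O(|x|^2),
\]
where $[b_{(123)}] \in S^2(\Real^m)\otimes \Real^m$ by the symmetry of $g$ in $(i,j)$.

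Second, consider a coordinate change of the form
\[
x_\alpha = y_\alpha - A_{ij\alpha}\, y_i y_j + O(|y|^3),
\]
with $[A_{(123)}] \in S^2(\Real^m)\otimes \Real^m$. For any such $A$ this is a smooth local diffeomorphism at the origin by the inverse function theorem. A direct computation of the pullback metric, using the symmetry of $A$ in its first two slots to simplify the Jacobian contribution, yields
\[
\tilde{g}_{ij}(y) = \delta_{ij} + \bigl(b_{ijk} - 2A_{kij} - 2A_{kji}\bigr)\, y_k + O(|y|^2).
\]

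Third, I would identify the linear map $A \mapsto [2A_{kij} + 2A_{kji}]$ on $S^2(\Real^m)\otimes \Real^m$ with $-\Psi_1$ in the notation of Section~\ref{sec:alg}. The equation we must solve in order to kill the linear term is then $\Psi_1(A) = -b$, which admits a unique solution because $\Psi_1$ is an isomorphism by Proposition~\ref{prop:psi1}. With this $A$, the new coordinate $y$ is smooth around $\bx$ and satisfies \eqref{eqn:prealgebra}.

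The only potentially delicate step is the index-bookkeeping needed to match the tensor operation produced by the coordinate-change computation with the map $\Psi_1$ defined in Section~\ref{sec:alg}; once this identification is verified, the conclusion is an immediate application of Proposition~\ref{prop:psi1}. No analysis is needed, only Taylor expansion plus one linear-algebra fact already proved.
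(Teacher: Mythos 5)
Your proposal is correct and follows essentially the same route as the paper: Taylor-expand $g$, make a quadratic coordinate change, compute the pullback to see how the linear coefficient transforms, and solve the resulting linear equation using Proposition~\ref{prop:psi1}. The only cosmetic difference is that you express the change of variables in the inverse direction ($x$ in terms of $y$), which produces the sign $\Psi_1(A)=-b$ rather than the paper's $\Psi_1(B)=A$; otherwise the argument is the same.
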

\begin{proof}
	In the original coordinate,
	\begin{equation}
		\label{eqn:step1ass}
		g_{ij}=\delta_{ij}+A_{ijk}x_k + O(\abs{x}^2)
	\end{equation}
	where $A_{ijk}=A_{jik}$.
	By Proposition  \ref{prop:psi1}, we can find $B_{ijk}$ satisfying
	\begin{equation}
		\label{eqn:step11}
		A_{abk}+2B_{kab}+2B_{kba}=0, \qquad \text{} \quad \forall k,a,b=1,\cdots ,m.
	\end{equation}
	Consider a coordinate change
	\[
		\varphi(x)_k = x_k + B_{ijk}x_ix_j.
	\]
	The pullback metric is
	\[
		(\varphi^* g)_{ab}= \left( \delta_{ij}+A_{ijk}\varphi_k \right) \left( \delta_{ia}+2B_{kai}x_k \right)\left( \delta_{jb}+ 2B_{kbj}x_k\right)+  O(\abs{x}^2).
	\]
	Thanks to \eqref{eqn:step11}, we obtain
	\[
		(\varphi^* g)_{ab}= \delta_{ab} + O(\abs{x}^2).
	\]
\end{proof}

Lemma \ref{lem:algebra} deals with the second order terms in the expansion of $g_{ij}$.
\begin{lem}
	\label{lem:algebra}
	There is another smooth coordinate centered at $\bx$ such that
	\[
		g_{ij}=\delta_{ij}- \frac{1}{3}R_{ipqj} x_px_q + O(\abs{x}^3)
	\]
	where $R_{ipqj}$ is the Riemann curvature tensor at $x=0$. 
\end{lem}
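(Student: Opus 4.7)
The plan is to continue the Bryant-style inductive scheme: eliminate or reduce the quadratic coefficient in the expansion by a cubic change of coordinates, just as Lemma \ref{lem:prealgebra} used a quadratic change to kill the linear coefficient. Starting from the coordinate provided by that lemma, Taylor expansion gives
\[
g_{ij} = \delta_{ij} + C_{ijkl}x_k x_l + O(|x|^3)
\]
with $C\in S^2(\Real^m)\otimes S^2(\Real^m)$. The first observation, to be verified by a direct computation of $\Gamma^i_{jk}$ and its first derivative at the origin (Christoffel symbols vanish at $0$ since the linear term is gone, and the quadratic part of $g$ determines $\partial\Gamma|_0$), is that the Riemann tensor at $0$ is exactly $\mathbf{R}(C)$, where $\mathbf{R}$ is the map from \eqref{eqn:R}.

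Next, I would apply Proposition \ref{prop:psi2} to write
\[
C = \tilde{\mathbf{R}}(C) + \Psi_2(B),
\]
with $B\in\Real^m\otimes S^3(\Real^m)$ given explicitly by \eqref{eqn:B}. The $\Psi_2(B)$-piece is precisely what a cubic coordinate change can remove, and the $\tilde{\mathbf{R}}(C)$-piece is the intrinsic curvature obstruction.

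Concretely, consider the diffeomorphism $\varphi(x)_i = x_i + B_{ijkl}x_j x_k x_l$. Using the symmetry of $B$ in its last three indices, $\partial_a \varphi_i = \delta_{ia} + 3B_{iakl}x_k x_l$, and a straightforward pullback calculation keeping only terms of order $|x|^2$ yields
\[
(\varphi^*g)_{ab} = \delta_{ab} + \bigl[C_{abkl} + 3(B_{abkl}+B_{bakl})\bigr]x_k x_l + O(|x|^3)
= \delta_{ab} + [C - \Psi_2(B)]_{abkl}\, x_k x_l + O(|x|^3).
\]
By the choice of $B$ this quadratic coefficient collapses to $\tilde{\mathbf{R}}(C)_{abkl}$. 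It remains to identify this with $-\tfrac{1}{3}R_{apqb}$ after contraction with $x_px_q$: unpacking the definition, $\tilde{\mathbf{R}}(C)_{abkl} = -\tfrac{1}{6}(R_{aklb}+R_{alkb})$ with $R=\mathbf{R}(C)$; then since $x_k x_l$ is symmetric, relabeling dummy indices $k\leftrightarrow l$ in the second term gives $R_{alkb}x_kx_l = R_{aklb}x_kx_l$, so the sum contracts to $-\tfrac{1}{3}R_{aklb}x_kx_l$, which is the claimed expansion.

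The only real obstacle is bookkeeping: matching the index conventions of Section \ref{subsec:maps}, being careful that $\varphi(x)=x+O(|x|^3)$ means the higher order terms of $g$ evaluated at $\varphi(x)$ do not contribute at order $|x|^2$, and tracking the signs in the definitions of $\mathbf{R}$, $\tilde{\mathbf{R}}$ and $\Psi_2$. No new analytic input is required beyond Proposition \ref{prop:psi2}, since everything reduces to the algebraic decomposition and an elementary pullback computation.
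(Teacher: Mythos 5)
Your proposal is correct and follows essentially the same route as the paper: decompose the quadratic coefficient via Proposition \ref{prop:psi2}, kill the $\Psi_2(B)$-part with the cubic change $\varphi_i = x_i + B_{ijkl}x_jx_kx_l$, and identify the surviving $\tilde{\mathbf{R}}(C)$-term with $-\tfrac{1}{3}R_{ipqj}x_px_q$. The only cosmetic difference is that you compute the curvature at $0$ via $\mathbf{R}(C)$ before the change and invoke $d\varphi|_0=\mathrm{Id}$ to carry it over, whereas the paper verifies at the end that $R$ is the curvature of $\varphi^*g$; these are equivalent.
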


\begin{proof}
	Since $g$ is smooth and satisfies \eqref{eqn:prealgebra}, we have an expansion
	\[
		g_{ij}=\delta_{ij}+A_{ijkl}x_kx_l +O(\abs{x}^3).
	\]
	Here $A_{ijkl}$ satisfies $A_{ijkl}=A_{jikl}=A_{ijlk}$.

	By Proposition \ref{prop:psi2}, there are $\tilde{R}\in \tilde{\mathcal C}$ and $B\in \Real^m\otimes S^3(\Real^m)$ such that
	\[
		A= \tilde{R}+ \Psi_2(B).
	\]
	Consider a coordinate change
	\[
		\varphi(x)_i= x_i + B_{ijkl}x_jx_kx_l.
	\]

	The pullback metric is
	\begin{eqnarray*}
		(\varphi^* g)_{pq}&=& \pfrac{\varphi_i}{x_p}\pfrac{\varphi_j}{x_q} \left( \delta_{ij}+A_{ijkl}\varphi_k\varphi_l + O(\abs{\varphi}^3) \right) \\
				  &=& \delta_{pq} + 3B_{qpbc}x_bx_c+ 3B_{pqbc}x_bx_c + A_{pqbc}x_bx_c + O(\abs{x}^3).
	\end{eqnarray*}
	By the definition of $\Psi_2$ in Section \ref{subsec:maps}, 
	\[
		(\varphi^* g)_{ij} = \delta_{ij} + \tilde{R}_{ijpq} x_px_q + O(\abs{x}^3).
	\]
	By the isomorphism between $\tilde{\mathcal C}$ and $\mathcal C$, we have $R\in \mathcal C$ such that
	\[
		(\varphi^* g)_{ij} = \delta_{ij} -\frac{1}{6} (R_{ipqj}+R_{iqpj})x_px_q + O(\abs{x}^3).
	\]
	Given this expansion, we verify by direct computation that $R$ is the curvature tensor of $\varphi^* g$ at $x=0$.
\end{proof}
This proves the first claim in Theorem \ref{thm:known}. 

For the second part, take the coordinate given by Lemma \ref{lem:algebra} and we plan to solve
\begin{equation}
	\label{eqn:pde3}
	\triangle u_i + \Gamma^i_{jk}(x+u(x)) \left( \delta_{ja} + \pfrac{u_j}{x_a} \right)\left( \delta_{ka}+\pfrac{u_k}{x_a} \right)=0
\end{equation}
for $u$ in a neighborhood of $0$ such that
\begin{equation}
	\label{eqn:zerou}
	u(0)= \nabla u(0) = \nabla^2 u(0) = \nabla^3 u(0) =0.
\end{equation}
\eqref{eqn:pde3} implies that $\varphi(x)=x+u(x)$ is a harmonic map, i.e. the coordinate given by $\varphi$ is a Bianchi coordinate and it follows from \eqref{eqn:zerou} that the expansion in this new Bianchi  coordinate remains the same.

To solve \eqref{eqn:pde3}, we use Lemma \ref{lem:pde} with $u_0=0$. Therefore, it remains to check that
\[
	\Gamma^i_{aa}(0)=0; \quad \partial_l \Gamma^i_{aa}(0)=0.
\]
The first equation is trivial because there is no linear term in the expansion. For the second term, we compute
\begin{eqnarray*}
	\Gamma^i_{jk}(x)&=& -\frac{1}{3}\left( R_{jkbi} + R_{jbki} + R_{kjbi}+R_{kbji}-R_{jibk}-R_{jbik} \right) x_b + \cdots 
\end{eqnarray*}
Hence,
\[
	\partial_{l}\Gamma^i_{aa} (0)= \frac{2}{3} (Ric)_{li}(0)=0,
\]
where we have used the assumption $Ric(\bx)=0$.

This finishes the proof of Theorem \ref{thm:known} in the Bianchi coordinate case.  For the harmonic coordinate case, it suffices to replace \eqref{eqn:pde3} with 
\[
	g^{ab} \left(\partial_{ab} u_i - \Gamma^k_{ab}(\delta_{ik} + \partial_k u_i) \right)	=0.
\]
The rest of the proof is the same.

\section{Coordinate at infinity of asymptotically flat Einstein metric}
\label{sec:infinity}

In this section, we prove Theorem \ref{thm:main1} and Theorem \ref{thm:main2}. We start by recalling the famous result of Bando, Kasue and Nakajima \cite{bando1989on}. Let $(M,g)$ be as in Theorem \ref{thm:main1}. It was proved in \cite{bando1989on} that there is a compact set $K$ and a coordinate defined on the universal cover $\Omega\setminus K$ in which the metric satisfies
\[
	\abs{g_{ij}(x)-\delta_{ij}} \leq C\frac{1}{r^{m-1-\varepsilon}} \qquad \text{on} \quad \Real^m\setminus B_R.
\]
\begin{rem}
	Throughout this section, we assume that $\Omega\setminus K$ is simply connected for simplicity. In fact, we work on $\Omega_R$ and in case the fundamental group $\Gamma$ is not trivial, we should insist that every function/tensor be $\Gamma$-invariant/equivariant.
\end{rem}
Moreover, the coordinate function $x_i$ is harmonic, i.e.
\[
	\triangle_g x_i =0.
\]
Therefore, the Ricci-flatness implies
\begin{equation}
	\label{eqn:ricciflat}
	\triangle g =Q(g,\partial g)
\end{equation}
where $Q$ is quadratic in $\partial g$.

Due to \eqref{eqn:ricciflat} and the elliptic estimate, we know for any $k\in \mathbb N$,
\[
	\abs{ \partial^{(k)} g_{ij}(x)} \leq C(k) \frac{1}{\abs{x}^{m+k-1-\varepsilon}}.
\]
By Definition \ref{defn:O}, we may write
\begin{equation}
	\label{eqn:maywrite}
	g_{ij}=\delta_{ij}+ O(\abs{x}^{-(m-1-\varepsilon)}).
\end{equation}
Hence, the right hand side of \eqref{eqn:ricciflat} is in $O(\abs{x}^{-2(m-\varepsilon)})$. By Theorem \ref{thm:linear}, we know the existence of a matrix of harmonic functions $h_{ij}$ such that
\[
	g_{ij}=h_{ij}+ O(\abs{x}^{-2(m-\varepsilon)+2}).
\]
As harmonic functions defined on $\Real^m\setminus B_R$, $h_{ij}$ has a well-known expansion. By comparing with \eqref{eqn:maywrite}, we obtain a set of coefficients $A_{ijk}$ (with $A_{ijk}=A_{jik}$) such that
\begin{equation}
	\label{eqn:gpre1}
	g_{ij}=\delta_{ij}+A_{ijk}\frac{x_k}{\abs{x}^{m}} + O(\abs{x}^{-m}).
\end{equation}

As discussed in Section \ref{sec:intro}, when the metric $g$ admits some nontrivial isometric action of a finite group $\Gamma$, it is known that $A_{ijk}$ must vanish. Here we do not assume this additional symmetry. Instead of proving the vanishing of $A_{ijk}$ directly, we show the existence of {\it another} coordinate in which the metric expansion has vanishing $m-1$ order term.

\subsection{A coordinate with vanishing $m-1$ order term}

Using notations given above, we prove
\begin{prop}
	\label{prop:step1} There is a Bianchi coordinate, denoted by $\tilde{x}$, defined on $\Real^m \setminus B_{R'}$ (for some $R'>R$) such that the metric has an expansion
	\[
		\tilde{g}_{ij}=\delta_{ij} +O (\abs{\tilde{x}}^{-m+\varepsilon})\qquad \text{for small} \quad \varepsilon>0,
	\]
	where $\tilde{g}_{ij}$ is the metric matrix of $g$ in coordinate $\tilde{x}$.
\end{prop}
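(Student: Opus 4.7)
The plan is to carry out the two-stage strategy outlined in the introduction: first an algebraic coordinate change to eliminate the leading $\abs{x}^{-(m-1)}$ term in the metric expansion, then an analytic correction via Theorem \ref{thm:pde} to enforce the Bianchi condition while preserving the improved decay.

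\textbf{Algebraic step.} Starting from the BKN expansion $g_{ij} = \delta_{ij} + A_{ijk}\tfrac{x_k}{\abs{x}^m} + O(\abs{x}^{-m})$, I would first show $\mathbf{B}_1(A) = 0$. Since the BKN coordinate is harmonic with respect to $g$, i.e., $g^{ab}\Gamma^i_{ab} = 0$, and $g^{ab} = \delta^{ab} + O(\abs{x}^{-(m-1)})$, the leading $O(\abs{x}^{-m})$ part of this identity reduces to $\delta^{ab}\Gamma^i_{ab} = \partial_a g_{ia} - \tfrac{1}{2}\partial_i g_{aa} = 0$, which is precisely the component form of $B_{g_e}g = 0$ at that order. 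Substituting the expansion for $g$ and collecting the polynomial structure in $\omega = x/\abs{x}$ (using that a quadratic polynomial vanishes on $S^{m-1}$ iff it is a multiple of $\abs{\omega}^2 - 1$) yields the system $\mathbf{B}_1(A) = 0$; this is the formal motivation for the definition of $\mathbf{B}_1$. By Proposition \ref{prop:psi3}, there exists $B \in \Real^m$ with $A = \Psi_3(B)$. Defining $\varphi_0(x)_i := x_i + B_i/\abs{x}^{m-2}$ and computing the pullback $\varphi_0^* g$, the only order-$\abs{x}^{-(m-1)}$ contribution coming from $\partial \varphi_0$ is exactly $-\Psi_3(B)_{ijk}\tfrac{x_k}{\abs{x}^m}$, which cancels $A_{ijk}\tfrac{x_k}{\abs{x}^m}$. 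In the new coordinate (renamed $x$), $g_{ij} = \delta_{ij} + O(\abs{x}^{-m})$.

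\textbf{Analytic step.} After this algebraic change, $g - g_e \in O(\abs{x}^{-m})$ and a direct computation gives $\Gamma^i_{aa} = O(\abs{x}^{-(m+1)})$. I would then apply Theorem \ref{thm:pde} with $\tilde{n} = m+1$: for any preassigned small $\varepsilon > 0$, this produces $u \in O(\abs{x}^{-(m-1-\varepsilon)})$ on some $\Omega_{R'}$ such that $\varphi(x) = x + u(x)$ is harmonic from $(\Omega_{R'}, g_e)$ to $(\Omega_{R'}, g)$. By Lemma \ref{lem:bianchiharmonic}, the coordinate $\tilde{x}$ defined by $\varphi$ is Bianchi. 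Expanding the pullback using $\partial u \in O(\abs{x}^{-(m-\varepsilon)})$ and $g\circ\varphi - \delta \in O(\abs{x}^{-m})$, one obtains $\tilde{g}_{ij} = \delta_{ij} + O(\abs{\tilde{x}}^{-m+\varepsilon})$, as required; the $\varepsilon$ loss is introduced entirely by the $\partial u$ terms in the pullback.

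\textbf{Main obstacle.} The hard part is the first bullet of the algebraic step: turning the vector-valued scalar relation $B_{g_e}g = 0$ at order $\abs{x}^{-m}$ into the tensor identity $\mathbf{B}_1(A) = 0$. This requires carefully reading off independent components from a polynomial in $\omega$ on $S^{m-1}$, keeping track of how the free index and the $\omega_a\omega_b$ factors interact, and matching the result against the somewhat opaque formula defining $\mathbf{B}_1$. Once $A \in {\rm Ker}(\mathbf{B}_1)$ is in hand, Proposition \ref{prop:psi3} and the explicit pullback of $\varphi_0^* g$ are routine, and the analytic step reduces to checking the hypotheses of Theorem \ref{thm:pde} and tracking decay rates.
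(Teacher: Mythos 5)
Your proposal follows the paper's proof step by step (harmonicity of the BKN coordinate yields $\mathbf{B}_1(A)=0$; Proposition \ref{prop:psi3} produces $B\in\Real^m$ whose inversion-type coordinate change $x_i+B_i/\abs{x}^{m-2}$ removes the $\abs{x}^{-(m-1)}$ term; then Theorem \ref{thm:pde} with $\tilde n=m+1$ converts the result into a Bianchi coordinate at the cost of an $\varepsilon$-loss coming from $\partial u$), so it is essentially the same argument. One sign slip worth flagging: the order-$\abs{x}^{-(m-1)}$ contribution of $\partial\varphi_0$ to the pullback is $+\Psi_3(B)_{ijk}\tfrac{x_k}{\abs{x}^m}$, not $-\Psi_3(B)_{ijk}\tfrac{x_k}{\abs{x}^m}$, so for the cancellation to go through you must take $\Psi_3(B)=-A$ (as the paper does), not $A=\Psi_3(B)$.
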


\begin{proof}
	Since the coordinate functions $x_i$'s that we use in \eqref{eqn:gpre1} are harmonic, the coefficients $A_{ijk}$ satisfy some additional requirements. More precisely, the harmonic function equation $\triangle_g x_i =0$ implies that
	\[
		g^{kl} \Gamma^i_{kl}(x) =0.
	\]
	Plugging \eqref{eqn:gpre1} into the above equation and ignoring all terms that decay faster than $\abs{x}^{-m}$, we obtain
	\[
		2 \partial_{x_a} \left( A_{iab}\frac{x_b}{\abs{x}^m} \right) - \partial_{x_i} \left( A_{aab}\frac{x_b}{\abs{x}^m} \right)=0,
	\]
	which is equivalent to
	\[
		(2A_{iaa}\delta_{bc} - 2mA_{ibc}- A_{aai}\delta_{bc}+m A_{aab}\delta_{ic}) x_bx_c=0.
	\]
	By the definition of ${\mathbf B}_1$ in Section \ref{subsec:maps}, the above equation becomes
	\[
		{\mathbf B}_1(A)=0.
	\]
	Proposition \ref{prop:psi3} implies the existence of $B\in \Real^m$ such that $\Psi_3(B)=-A$, i.e.
	\[
		A_{ijk}=(m-2)(\delta_{ik}B_j + \delta_{jk}B_i).
	\]
	With this $B$, we consider a coordinate change $\varphi$ given by
	\[
		(\varphi(x))_i= x_i+ \frac{B_i}{\abs{x}^{m-2}}.
	\]
	Note that when $\abs{x}$ is large, $\varphi$ is a diffeomorphism. The pullback metric $\varphi^*(g)$ has an expansion
	\begin{equation}
		\label{eqn:trans1}
		\begin{split}
			(\varphi^* g)_{pq} &=g_{ij}(\varphi(x)) \left( \delta_{ip} - (m-2)\frac{B_ix_p}{\abs{x}^{m}} \right) \left( \delta_{jq} - (m-2) \frac{B_j x_q}{\abs{x}^{m}} \right) \\
					   &= \delta_{pq} + \abs{x}^{-m} \left( A_{pqk}x_k - (m-2)B_q x_p -(m-2) B_px_q \right) + O(\abs{x}^{-m}) \\
					   &= \delta_{pq}+ O(\abs{x}^{-m}).
		\end{split}
	\end{equation}

	To finish the proof of Proposition \ref{prop:step1}, we need to apply Theorem \ref{thm:pde}. Using the new coordinate given by $\varphi$ (still denoted by $x$), the metric $g$ satisfies
	\begin{equation}
		\label{eqn:gijgood}
		g_{ij}=\delta_{ij}+O(\abs{x}^{-m}).
	\end{equation}
	Hence, 
	\[
		\Gamma^i_{jk}(x) = O(\abs{x}^{-(m+1)}).
	\]
	It then follows that the identity map $\varphi(x)=x$ satisfies approximately the harmonic map equation in the sense that
	\begin{equation}
		\label{eqn:bian}
		\triangle \varphi_i + \Gamma^i_{jk}(\varphi) \pfrac{\varphi_j}{x_a} \pfrac{\varphi_k}{x_a}  = O(\abs{x}^{-(m+1)}).
	\end{equation}
	Theorem \ref{thm:pde} then implies that there is a solution $\tilde{\varphi}(x)$ to the above equation such that
	\[
		\tilde{\varphi}(x)= x + O(\abs{x}^{-(m-1-\varepsilon)}).
	\]
	Using a computation similar to \eqref{eqn:trans1} again, we obtain the desired expansion in this proposition in the coordinate given by $\tilde{\varphi}$. The new coordinate now is Bianchi. 
\end{proof}

\begin{rem}
	There is a similar version of Proposition \ref{prop:step1} that gives a harmonic coordinate instead of a Bianchi one. Here we give an indication of how to modify the proof. The proof before \eqref{eqn:gijgood} remains intact. We observe that
	\[
		\triangle_g x_a = g^{ij}(\partial^2_{ij}x_a-\Gamma^k_{ij}(x)\partial_k x_a)=- g^{ij}(x)\Gamma_{ij}^a(x)=O(\abs{x}^{-(m+1)}).
	\]
	We need to find $f_a\in O(\abs{x}^{-(m-1-\varepsilon)})$ so that
	$$
	\triangle_g f_a = \triangle_g x_a,
	$$
	that is
	\[
		\triangle f_a = \triangle_g x_a + g^{ij}(x)\Gamma^k_{ij}(x)\partial_k f_a - (g^{ij}-\delta_{ij})\partial^2_{ij}f_a.
	\]
	This could be done by a similar fixed point argument as in the proof of Theorem \ref{thm:pde}. Given $f_a$, it suffices to check that $\tilde{x}_a=x_a-f_a$ is the desired coordinate.
\end{rem}

\subsection{Proof of Theorem \ref{thm:main1}}

Recall that the Ricci curvature formula in a general coordinate system is
\[
	Ric(g)_{ij}= \frac{1}{2}\sum_{s,t}g^{st} \left( \frac{\partial^2 g_{is}}{\partial x_j \partial x_t} + \frac{\partial^2 g_{js}}{\partial x_i \partial x_t} - \frac{\partial^2 g_{ij}}{\partial x_s \partial x_t} - \frac{\partial^2 g_{st}}{\partial x_i \partial x_j}\right) + Q(g,\partial g).
\]
By the definition of Bianchi coordinate, the Ricci-flat equation in Bianchi coordinate becomes
\begin{equation}
	\label{eqn:bianchirf}
	\triangle g_{ij} + Q(g,\partial g) + Q'(g-\delta,\partial^2 g)=0,
\end{equation}
where $Q$ is quadratic in $\partial g$ and $Q'$ is linear in both $g-\delta$ and $\partial^2 g$.

Given Proposition \ref{prop:step1}, by the same argument that we used to prove \eqref{eqn:gpre1}, we obtain 
\begin{equation}
	\label{eqn:step2ass}
	g_{ij}= \delta_{ij}+A_{ijkl}\frac{x_kx_l}{\abs{x}^{m+2}}+ O(\abs{x}^{-(m+1)}).
\end{equation}
Here $A_{ijkl}$ is a tensor satisfying the obvious symmetry
$$
A_{ijkl}=A_{jikl}=A_{ijlk}.
$$
Plugging \eqref{eqn:step2ass} into \eqref{eqn:bianchirf}, focusing on the leading term and taking the limit $\abs{x}\to \infty$, we obtain
\begin{equation}
	\label{eqn:step2hf}
	\triangle \left( A_{ijkl}\frac{x_kx_l}{\abs{x}^{m+2}} \right)=0,
\end{equation}
which implies that $A_{ijkk}=0$, or equivalently,
\[
	A\in S^2(\Real^m)\otimes S^2_0(\Real^m).
\]

In addition to \eqref{eqn:step2hf}, being a Bianchi coordinate puts strong restrictions to the $A$ in \eqref{eqn:step2ass}. The Bianchi condition means
\[
	\partial_j g_{ij} -\frac{1}{2} \partial_i g_{jj}=0 \qquad \text{for all} \quad x.
\]
Plugging \eqref{eqn:step2ass} into the above equation  and focusing only on the $\abs{x}^{-(m+1)}$ order term, we obtain
\[
	\left( 4A_{aial}\delta_{kj} -2(m+2)A_{jikl}-2A_{aail}\delta_{kj}+(m+2)A_{aakl}\delta_{ij} \right)x_kx_lx_j=0.
\]
Note that we can always do the symmetrization over the indices $k,l,j$ for the coefficients before $x_kx_lx_j$. This leads us to the definition of ${\mathbf B}_2$ in Section \ref{sec:alg}
\[
	{\mathbf B}_2: S^2(\Real^m)\otimes S^2(\Real^m)\to \Real^m \otimes S^3(\Real^m)
\]
by
\[
	{\mathbf B}_2(A)= S_{(234)}[4A_{(a1a4)}\delta_{(23)}-2(m+2)A_{(3124)}-2A_{(aa14)}\delta_{(23)}+(m+2)A_{(aa24)}\delta_{(13)}].
\]
In summary, we have shown that the tensor $A$ in \eqref{eqn:step2ass} lies in
\[
	Z:=S^2(\Real^m)\otimes S^2_0(\Real^m) \cap {\rm Ker}({\mathbf B}_2).
\]
To finish the proof of Theorem \ref{thm:main1}, we {\bf claim} the existence of {\it another} Bianchi coordinate (which is a small perturbation of the one given in Proposition \ref{prop:step1}) such that in the new coordinate (denoted by $\tilde{x}$) we have
\begin{equation}
	\label{eqn:1goal}
	\tilde{g}_{ij}=\delta_{ij}+ \tilde{W}_{ijkl}\frac{\tilde{x}_k\tilde{x}_l}{\abs{\tilde{x}}^{m+2}} + O(\abs{\tilde{x}}^{-(m+1)})
\end{equation}
for some $\tilde{W}\in \widetilde{\mathcal W}$. Given $\tilde{W}$, we can find $W\in \mathcal W $ such that
\[
	s(W)=\frac{1}{2}[W_{(1342)}+W_{(1432)}]=\tilde{W}.
\]
Obviously,
\[
	\tilde{W}_{ijkl}\frac{\tilde{x}_k\tilde{x}_l}{\abs{\tilde{x}}^{m+2}} = W_{iklj}\frac{\tilde{x}_k\tilde{x}_l}{\abs{\tilde{x}}^{m+2}}.
\]
Hence the proof of Theorem \ref{thm:main1} is done. 

It remains to prove the claim. The strategy is the same as before. We first find a vector-valued polynomial function
\begin{equation}
	\label{eqn:uiB}
	u_i= B_{ij}\frac{x_j}{\abs{x}^{m}}
\end{equation}
such that $\varphi(x)=x+u(x)$ defines a diffeomorphism near the infinity and the pullback metric $\varphi^*g$ has the required expansion. Second, we solve the harmonic map equation to get a Bianchi coordinate with the same expansion.

For the first step, if $g$ is given in \eqref{eqn:step2ass} and the coordinate transformation $\varphi$ is given by $B$ as above, we derive a formula of $\varphi^*(g)$
\begin{eqnarray*}
	(\varphi^*g)_{pq} &=& \left( \delta_{ij} +A_{ijkl}\frac{\varphi_k \varphi_l}{\abs{\varphi}^{m+2}} + O(\abs{x}^{-(m+1)}) \right) \\
			  && \cdot \left( \delta_{ip} + B_{ia}(\frac{\delta_{ap}}{\abs{x}^m} - m \frac{x_ax_p}{\abs{x}^{m+2}}) \right) \\
			  && \cdot \left( \delta_{jq} + B_{ja}(\frac{\delta_{aq}}{\abs{x}^m} - m \frac{x_ax_q}{\abs{x}^{m+2}}) \right).
\end{eqnarray*}
The $\abs{x}^{-m}$ order term in the right hand side of the above is 
\[
	\left(A_{pqbc}+B_{qp}\delta_{bc}+B_{pq}\delta_{bc}-\frac{m}{2} (B_{qb}\delta_{pc}+B_{qc}\delta_{pb}+B_{pb}\delta_{qc}+B_{pc}\delta_{qb})\right) x_bx_c.
\]
This explains the definition of $\Psi_4$ in Section \ref{sec:alg}
\[
	\Psi_4: (\Real^m)^{\otimes 2} \to S^2(\Real^m)\otimes S^2(\Real^m)
\]
defined by
\[
	\Psi_4(B)= [B_{(12)}\delta_{(34)}+B_{(21)}\delta_{(34)}- \frac{m}{2}(B_{(13)}\delta_{(24)}+B_{(14)}\delta_{(23)}+B_{(23)}\delta_{(14)}+B_{(24)}\delta_{(13)})].
\]
\begin{lem}
	\label{lem:findB}
	For any $A\in Z$, there exists $B\in (\Real^m)^{\otimes 2}$ satisfying
	\[
		A+ \Psi_4(B)\in \widetilde{\mathcal W}.
	\]
	Moreover, this $B$ is unique.
\end{lem}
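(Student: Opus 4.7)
My plan is to deduce the lemma directly from Proposition \ref{prop:decomp2}, which gives the decomposition
\[
Z = Z_3 \oplus Z_5 \oplus \widetilde{\mathcal W},
\]
together with the identifications $Z_3 = \Psi_4(\Lambda^2(\Real^m))$ and $Z_5 = \Psi_4(S^2_0(\Real^m))$. The whole argument is really bookkeeping on top of this proposition.

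For existence, given $A \in Z$ I would decompose $A = A_3 + A_5 + A_W$ with $A_3 \in Z_3$, $A_5 \in Z_5$, $A_W \in \widetilde{\mathcal W}$. The proposition supplies $B_3 \in \Lambda^2(\Real^m)$ and $B_5 \in S^2_0(\Real^m)$ with $A_3 = \Psi_4(B_3)$ and $A_5 = \Psi_4(B_5)$, and I set $B := -(B_3 + B_5)$. Since $B \in \Lambda^2(\Real^m) \oplus S^2_0(\Real^m)$, it is automatically traceless, and $A + \Psi_4(B) = A_W \in \widetilde{\mathcal W}$, as desired.

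For uniqueness, the key will be to show that $\Psi_4$ is injective on all of $(\Real^m)^{\otimes 2}$ with image inside $Z_3 \oplus Z_4 \oplus Z_5$. Splitting $(\Real^m)^{\otimes 2} = \Real\cdot\delta \oplus S^2_0(\Real^m) \oplus \Lambda^2(\Real^m)$, I would verify by direct substitution into the definition of $\Psi_4$ that
\[
\Psi_4(c\delta) = -cm\,\Xi_4(1), \qquad \Psi_4(B_s) = -\tfrac{m}{2}\Xi_5(B_s), \qquad \Psi_4(B_a) = -\tfrac{m}{2}\Xi_3(B_a)
\]
for $c\in\Real$, $B_s\in S^2_0(\Real^m)$, $B_a\in\Lambda^2(\Real^m)$. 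These three images land in $Z_4$, $Z_5$, $Z_3$ respectively, which by Lemma \ref{lem:fulldecom} (noting $Z_5 \subset Z_1\oplus Z_2$) sit in direct-sum position inside $S^2(\Real^m)\otimes S^2_0(\Real^m)$, so injectivity of $\Psi_4$ follows from that of the three component maps (which is immediate since $m\ne 0$). On the other hand $\widetilde{\mathcal W} \subset H_{2,2}$ by Lemma \ref{lem:weyl}, so $\widetilde{\mathcal W}\cap(Z_3\oplus Z_4\oplus Z_5) = 0$. Consequently, if $B'$ is a second candidate then $\Psi_4(B-B') \in \widetilde{\mathcal W}\cap \Psi_4((\Real^m)^{\otimes 2}) = 0$, and injectivity forces $B = B'$.

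The only nontrivial step is the three explicit computations of $\Psi_4$ on the irreducible pieces $\Real\cdot\delta$, $S^2_0$, $\Lambda^2$, but each is a short index manipulation exploiting the symmetry (resp.\ skew-symmetry) of $B_s$ (resp.\ $B_a$) to either kill the $B\delta$ terms or the $\delta B\delta$ permutation terms in the formula for $\Psi_4$. Once these identities are in hand, both conclusions are read off from the direct-sum decompositions already established in Section \ref{sec:alg}.
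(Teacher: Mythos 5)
Your proof is correct and follows essentially the same route as the paper's: deduce existence from Proposition \ref{prop:decomp2}, and pin down uniqueness and tracelessness by observing that $\Psi_4$ sends $\Real\cdot\delta$, $S^2_0$, $\Lambda^2$ into $Z_4$, $Z_5$, $Z_3$ respectively, all of which are transverse to $\widetilde{\mathcal W}\subset H_{2,2}$. The paper's proof is terser — it only records the computation $\Psi_4(\delta)\in Z_4$ as the one new observation needed beyond Proposition \ref{prop:decomp2} — whereas you spell out all three component computations, but the argument is the same.
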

\begin{proof}
	It is exactly Proposition \ref{prop:decomp2}.
\end{proof}

With the new coordinate defined by $\varphi(x)=x+u(x)$ where $u$ is defined in \eqref{eqn:uiB} and $B$ is given in Lemma \ref{lem:findB}, we have
\begin{equation}
	\label{eqn:step2almost}
	g_{ij}=\delta_{ij}+ W_{ijkl}\frac{x_kx_l}{\abs{x}^{m+2}} + O(\abs{x}^{-(m+1)})
\end{equation}
where $W\in \widetilde{\mathcal W}$. This is the first step.\\

Our next step is to turn this coordinate into a Bianchi coordinate with a small perturbation that keeps the expansion in \eqref{eqn:step2almost}. For this purpose,  we check that the identity map satisfies the harmonic map equation up to high order.

\begin{lem}
	\label{lem:idapprox}
	The identity map $\varphi(x)=x$ is approximately harmonic map from $g_e$ to $g$ in the sense that
	\begin{equation}
		\label{eqn:almosthm}
		\triangle \varphi_i + \Gamma^i_{jk}(\varphi) \pfrac{\varphi_j}{x_a} \pfrac{\varphi_k}{x_a}  = O(\abs{x}^{-(m+2)}).
	\end{equation}
	Here $\Gamma$ is the Christoffel symbol of $g$.
\end{lem}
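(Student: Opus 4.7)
The plan is to reduce the harmonic map equation for $\varphi(x)=x$ to a pointwise estimate on a Christoffel symbol, and then invoke Lemma \ref{lem:weyl} to kill the leading obstruction. For the identity map, $\triangle\varphi_i=0$ and $\partial\varphi_j/\partial x_a=\delta_{ja}$, so \eqref{eqn:almosthm} is equivalent to the single estimate
\[
\Gamma^i_{aa}(x)=O(\abs{x}^{-(m+2)}).
\]
Expanding the definition of $\Gamma^i_{aa}$ and using $g^{il}=\delta^{il}+O(\abs{x}^{-m})$, the contribution to $\Gamma^i_{aa}$ coming from $(g^{il}-\delta^{il})\cdot\partial g$ is of order $\abs{x}^{-(2m+1)}$, which is already $O(\abs{x}^{-(m+2)})$ for $m\ge 2$. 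The problem therefore reduces to bounding the flat Bianchi-type expression
\[
\partial_a g_{ia}-\frac{1}{2}\partial_i g_{aa}.
\]

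Substituting the expansion \eqref{eqn:step2almost} into this expression, the $O(\abs{x}^{-(m+1)})$ remainder in $g$ contributes only $O(\abs{x}^{-(m+2)})$ upon differentiation, which is harmless. The only piece that could obstruct the desired estimate is the explicit term $W_{ijkl}\, x_k x_l/\abs{x}^{m+2}$, whose derivative is of order $\abs{x}^{-(m+1)}$. Crucially, this is exactly the computation already performed immediately before the definition of $\mathbf{B}_2$ earlier in Section \ref{sec:infinity}: the $\abs{x}^{-(m+1)}$ component of $\partial_a g_{ia}-\frac{1}{2}\partial_i g_{aa}$ is, up to a universal factor, $\mathbf{B}_2(W)$ contracted with the cubic monomial $x_j x_k x_l$ and divided by $\abs{x}^{m+4}$.

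By construction $W\in\widetilde{\mathcal{W}}$, and Lemma \ref{lem:weyl} gives $\widetilde{\mathcal{W}}\subset\ker(\mathbf{B}_2)$, so this leading $\abs{x}^{-(m+1)}$ piece vanishes identically. Combined with the error estimates from the inverse metric and from the $O(\abs{x}^{-(m+1)})$ tail of $g$, this yields $\Gamma^i_{aa}(x)=O(\abs{x}^{-(m+2)})$, with the bounds on higher derivatives following automatically because every object involved is a smooth rational function whose order improves by one with each spatial derivative. The main obstacle is purely bookkeeping: one must verify that the four distinct contractions arising when one differentiates $W_{ijkl}x_kx_l/\abs{x}^{m+2}$ and $W_{aakl}x_kx_l/\abs{x}^{m+2}$ reassemble, after symmetrization over the three free monomial indices, into the expression defining $\mathbf{B}_2$. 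This matching was already recorded in the derivation of $\mathbf{B}_2$, so no new algebraic identity is required beyond observing the sign/normalization factor coming from the $-\frac{1}{2}\partial_i g_{aa}$ half of the Bianchi operator.
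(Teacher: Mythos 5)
The proposal is correct, and it reaches the conclusion by a slightly different route than the paper. The paper recomputes $\Gamma^i_{jk}$ explicitly from the expansion \eqref{eqn:step2almost}, takes the $(j,k)$-trace, and then invokes tracelessness of $W$ and the first Bianchi identity directly to kill the leading term. You instead observe that to leading order $\Gamma^i_{aa}=\partial_a g_{ia}-\tfrac12\partial_i g_{aa}+O(\abs{x}^{-(2m+1)})$ is the flat Bianchi operator applied to $g$, whose $\abs{x}^{-(m+1)}$-order coefficient was already identified, during the derivation of ${\mathbf B}_2$ earlier in Section \ref{sec:infinity}, as ${\mathbf B}_2$ applied to the second-order coefficient tensor; since $W\in\widetilde{\mathcal W}$ and Lemma \ref{lem:weyl} gives $\widetilde{\mathcal W}\subset\ker({\mathbf B}_2)$, this leading obstruction vanishes. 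The two arguments are algebraically equivalent — membership in $H_{2,2}\cap\ker({\mathbf B}_2)$ is exactly tracelessness plus first Bianchi — but your packaging avoids redoing the explicit Christoffel computation by reusing the already-established relationship between the coordinate Bianchi operator and ${\mathbf B}_2$. The remaining error terms (inverse-metric correction of order $\abs{x}^{-(2m+1)}$, and the $O(\abs{x}^{-(m+1)})$ tail of $g$ which gains an order under differentiation by Definition \ref{defn:O}) are handled correctly, and the derivative estimates follow as you say.
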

\begin{proof}
	Since $\varphi(x)=x$, \eqref{eqn:almosthm} is the same as
	\[
		\abs{\Gamma^i_{aa}}(x) = O(\abs{x}^{-(m+2)}).
	\]
	With \eqref{eqn:step2almost}, we can compute
	\[
		\pfrac{g_{ij}}{x_k}= W_{ijbc} \left( 2 \frac{\delta_{kb}x_c}{\abs{x}^{m+2}} - (m+2) \frac{x_bx_cx_k}{\abs{x}^{m+4}} \right) + O(\abs{x}^{-(m+2)}).
	\]
	Hence
	\begin{eqnarray*}
		\Gamma^i_{jk} &=& \frac{1}{2} \left( \pfrac{g_{ij}}{x_k} + \pfrac{g_{ik}}{x_j}- \pfrac{g_{jk}}{x_i} \right) + O(\abs{x}^{-(m+2)})  \\
			      &=& \frac{1}{2\abs{x}^{(m+4)}} \left( 2W_{ijbc}  \delta_{kb} x_c \abs{x}^2 - (m+2)W_{ijbc}x_bx_cx_k \right)  \\
			      && + \frac{1}{2\abs{x}^{(m+4)}} \left( 2W_{ikbc}  \delta_{jb} x_c \abs{x}^2 - (m+2)W_{ikbc}x_bx_cx_j \right)  \\
			      && - \frac{1}{2\abs{x}^{(m+4)}} \left( 2W_{jkbc}  \delta_{ib} x_c \abs{x}^2 - (m+2)W_{jkbc}x_bx_cx_i \right)  + O(\abs{x}^{-(m+2)})\\
			      &=& \frac{1}{\abs{x}^{m+4}} \left[ (W_{ijkc}+W_{ikjc}-W_{jkic})\delta_{ab}\right] x_ax_bx_c \\
			      &&- \frac{m+2}{2\abs{x}^{m+4}}(W_{ijbc}\delta_{ka} +W_{ikbc}\delta_{ja} - W_{jkbc}\delta_{ia})  x_ax_bx_c + O(\abs{x}^{-(m+2)}).
	\end{eqnarray*}
	Since all traces of $W$ vanish, when we take the $(j,k)$ trace of the above equation for $\Gamma^i_{jk}$, we obtain
	\[
		\Gamma^i_{aa}=- \frac{m+2}{2\abs{x}^{m+4}}(W_{iabc} +W_{iabc})  x_ax_bx_c + O(\abs{x}^{-(m+2)}).
	\]
	The proof of the lemma is done by noticing that 
	$$
	W_{iabc}x_ax_bx_c=0.
	$$
	In fact, since $W\in \widetilde{\mathcal W}$, by its definition in Section \ref{sec:alg}, we have the first Bianchi identity:
	$$
		W_{iabc}+W_{ibca}+W_{icab}=0.
	$$
\end{proof}
Given this lemma, we obtain from Theorem \ref{thm:pde} that there exists a Bianchi coordinate in which 
$$
	g_{ij}=\delta_{ij}+ W_{ijkl}\frac{x_kx_l}{\abs{x}^{m+2}} + O(\abs{x}^{-(m+1-\varepsilon)})
$$
for any $\varepsilon>0$. We then use the Einstein equation as in the beginning of this section to show \eqref{eqn:step2almost}. This completes the proof of Theorem \ref{thm:main1}.

\subsection{Proof of Theorem \ref{thm:main2}}

Let $(M,g)$ be as assumed in Theorem \ref{thm:main1}. It follows from the theorem that we have a coordinate defined on $\Omega_R$ and a Weyl-type tensor $W$ such that
\[
	g_{ij}=\delta_{ij}+ W_{iabj} \frac{x_ax_b}{\abs{x}^{m+2}} + O(\abs{x}^{-(m+1)}).
\]
The aim of this section is to show that $W$ (as a tensor) is an intrinsic geometric property of $(M,g)$. This follows from Theorem \ref{thm:main2}, because if we have some other coordinate in which we have a similar expansion, the two Weyl-type tensors are the same (up to a rotation in $\Real^m$).

\begin{rem}
	We do not claim that the coordinate in Theorem \ref{thm:main1} is unique. In fact, it is not.
\end{rem}

\begin{proof}[Proof of Theorem \ref{thm:main2}]
	Instead of one metric and two coordinates, we regard	
	\[
		g= (\delta_{ij}+W_{iabj}\frac{x_ax_b}{\abs{x}^{m+2}}+O(\abs{x}^{-(m+1)})) dx_i\otimes dx_j	
	\]
	as a metric defined on $\Omega_R$ and
	\[
		\tilde{g}= (\delta_{ij}+\tilde{W}_{iabj}\frac{y_ay_b}{\abs{y}^{m+2}}+O(\abs{y}^{-(m+1)})) dy_i\otimes dy_j.
	\]
	as another metric defined on $\Omega_{R'}$ in another copy of $\Real^m$. Then we have a map $\varphi:\Omega_R\to\Omega_{R'}\subset \Real^m$ such that $\varphi^*(\tilde{g})=g$. Indeed, this is just the identity map with its domain and its range represented by different coordinates.

	In particular, $\varphi$ is a harmonic map. Moreover, it follows from $\varphi^*(\tilde{g})=g$ that
	\begin{equation}
		\label{eqn:boundgradient}
		\abs{\pfrac{\varphi_i}{x_j}}\leq C \qquad \text{on} \quad \Omega_R.
	\end{equation}
	Since both $x$ and $y$ are coordinates of the same end, we have
	\[
		\lim_{\abs{x}\to \infty } \abs{\varphi(x)} =\infty.
	\]
	By \eqref{eqn:boundgradient}, for any $x,x'$ in $\Omega_R$,
	\[
		\abs{\varphi(x)-\varphi(x')}\leq C \abs{x-x'}.
	\]
	Hence, by the symmetry of $x$ and $y$ in the statement of the theorem, we have
	\begin{equation}
		\label{eqn:compare}
		C<\frac{\abs{\varphi(x)}}{\abs{x}}\leq C'\qquad \text{on} \quad \Omega_R.
	\end{equation}

	Let $\Gamma$ and $\tilde{\Gamma}$ be the Christoffel symbols of $g$ and $\tilde{g}$ respectively. The harmonic map equation of $\varphi$ is
	\begin{equation}
		\label{eqn:hme}
		g^{ij}\left( \partial_i \partial_j\varphi_a  - \Gamma^k_{ij} \partial_k{\varphi_a} + \tilde{\Gamma}^a_{bc}(\varphi)\partial_i \varphi_b \partial_j \varphi_c \right) =0.
	\end{equation}

	\begin{lem}
		\label{lem:local} The second and the third order derivatives of $\varphi$ are bounded on $\Omega_R$, i.e.
		\[
			\sup_{\Omega_{R+1}} \abs{\partial^2 \varphi}+ \abs{\partial^3 \varphi}\leq C.
		\]
	\end{lem}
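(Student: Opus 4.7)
The plan is to apply standard interior Schauder estimates to the harmonic map equation \eqref{eqn:hme}, exploiting that the equation is uniformly elliptic with bounded Hölder coefficients on $\Omega_R$, and that the gradient $\partial\varphi$ is already known to be bounded by \eqref{eqn:boundgradient}. The obstacle is that the natural Schauder estimate bounds $\|\varphi\|_{C^{2,\alpha}}$ in terms of $\|\varphi\|_{L^\infty}$, but $\varphi$ itself is unbounded on $\Omega_R$; I would get around this by translating $\varphi$ on each unit ball.

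More precisely, for any $x_0 \in \Omega_{R+1}$, the ball $B_1(x_0)$ lies inside $\Omega_R$. On $B_1(x_0)$ I would set $\psi(x):=\varphi(x)-\varphi(x_0)$. The gradient bound \eqref{eqn:boundgradient} gives $\|\psi\|_{L^\infty(B_1(x_0))}\le C$, and $\psi$ satisfies the same equation as $\varphi$, namely
\[
g^{ij}(x)\partial_i\partial_j\psi_a = g^{ij}(x)\Gamma^k_{ij}(x)\partial_k\psi_a - g^{ij}(x)\tilde{\Gamma}^a_{bc}(\psi+\varphi(x_0))\,\partial_i\psi_b\,\partial_j\psi_c.
\]
The coefficients $g^{ij}(x)$ are uniformly elliptic and their $C^{k,\alpha}$ norms are uniformly bounded on $\Omega_R$ since $g_{ij}-\delta_{ij}$ lies in $O(|x|^{-(m-1-\varepsilon)})$ with derivative decay. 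Moreover $\Gamma^k_{ij}(x)$ decays at infinity, and $\tilde{\Gamma}^a_{bc}$ is bounded on its range (decaying as $|y|\to\infty$), so the composition $\tilde{\Gamma}^a_{bc}(\varphi(x))$ is bounded uniformly in $x_0$ by \eqref{eqn:compare}. Combined with the bound on $|\partial\varphi|$, the right-hand side has uniform $C^\alpha(B_1(x_0))$ norm.

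Interior Schauder estimates (for uniformly elliptic operators with $C^\alpha$ coefficients) then yield
\[
\|\psi\|_{C^{2,\alpha}(B_{1/2}(x_0))} \le C\bigl(\|\psi\|_{L^\infty(B_1(x_0))} + \|\text{RHS}\|_{C^\alpha(B_1(x_0))}\bigr) \le C,
\]
with a constant independent of $x_0 \in \Omega_{R+1}$, giving the desired bound on $\partial^2\varphi$. To handle $\partial^3 \varphi$, I would differentiate the harmonic map equation once and apply Schauder to $w:=\partial_k\varphi_a$, which is already bounded by \eqref{eqn:boundgradient}; the differentiated RHS now involves $\partial g$, $\partial\Gamma$, $\partial\tilde\Gamma$, and $\partial^2\varphi$, all bounded in $C^\alpha$ by the previous step together with the decay of the metric and its derivatives. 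A final application of interior Schauder to $w$ produces the required $C^{2,\alpha}$ bound on $w$, hence the claimed bound on $\partial^3\varphi$. The argument is standard bootstrapping; the only care needed is the translation step that replaces $\varphi$ with $\psi$ to circumvent its linear growth at infinity.
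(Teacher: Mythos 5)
The translation trick $\psi := \varphi - \varphi(x_0)$ is a fine way to handle the linear growth, and this is implicitly what the paper does as well since it applies local estimates ball by ball. But the central step of your argument has a gap: you claim that ``combined with the bound on $|\partial\varphi|$, the right-hand side has uniform $C^\alpha(B_1(x_0))$ norm,'' and this does not follow. The right-hand side contains the product $\tilde\Gamma^a_{bc}(\varphi)\,\partial_i\varphi_b\,\partial_j\varphi_c$. At this stage of the argument the only thing you know about $\partial\varphi$ is the $L^\infty$ bound \eqref{eqn:boundgradient}; you have no H\"older modulus for $\partial\varphi$, so the product is bounded in $L^\infty$ but not in $C^\alpha$. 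The $C^\alpha$ regularity of $\partial\varphi$ is precisely what is being proved, so invoking Schauder estimates directly is circular.

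This is the standard difficulty with quasilinear systems having quadratic gradient terms, and the paper circumvents it exactly as one must: first apply interior $L^p$ (Calder\'on--Zygmund) estimates, which only require the right-hand side in $L^p$, for which the $L^\infty$ bound suffices. This yields $\varphi\in W^{2,p}_{\mathrm{loc}}$ for all $p$, hence $\partial\varphi\in W^{1,p}$, and by Sobolev embedding $\partial\varphi\in C^\beta$ on $B(x_0,3/4)$ with uniform bounds. Only at that point is the right-hand side of \eqref{eqn:hme} genuinely in $C^\beta$, and Schauder bootstrapping can proceed as you describe to reach the bounds on $\partial^2\varphi$ and $\partial^3\varphi$. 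If you insert this $L^p\Rightarrow C^\beta$ intermediate step before invoking Schauder, your proof becomes correct and coincides with the paper's.
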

	\begin{proof}
		Fix $\alpha\in (0,1)$. For any $x\in \Omega_{R+1}$ and $y\in \Omega_{R'+1}$, we have 
		\[
			\norm{g_{ij}}_{C^\alpha(B(x,1))}, \norm{\Gamma^k_{ij}}_{C^\alpha(B(x,1))}, \norm{\tilde \Gamma^a_{bc}}_{C^\alpha(B(y,1))}\leq C
		\]
		uniformly.
		By \eqref{eqn:boundgradient}, $\tilde{\Gamma}^a_{bc}(\varphi)$ is also bounded uniformly in $C^\alpha(B(x,1))$, i.e.
		\[
			\norm{\tilde \Gamma^a_{bc}\circ \varphi}_{C^\alpha(B(x,1))}\leq C.
		\]
		We apply the $L^p$ estimate to \eqref{eqn:hme} to get $W^{1,p}$ bound for $\partial \varphi$ on $B(x,3/4)$ for any $p$. This gives $C^\beta$($\beta\in (0,1)$) bound for $\partial \varphi$ on $B(x,3/4)$. The lemma then follows from the usual bootstrapping technique.
	\end{proof}
	Here is how we will use this lemma.
	\begin{cor}
		\label{cor:use} For any $\alpha\in (0,1)$, if $r=\abs{x}$, then
		\[
			r^{-2} \partial \varphi, r^{-2} \partial^2 \varphi \in \mathcal X_{\alpha,1}(\Omega_R).
		\]
	\end{cor}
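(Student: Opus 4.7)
The plan is simply to unfold the definition of the weighted norm $\mathcal{X}_{\alpha,1}(\Omega_R)$ and convert the pointwise bounds of Lemma \ref{lem:local} into the required estimate via a direct scaling computation. By definition, we must show that for $k=1,2$,
\[
    \sup_{r\geq R}\, r \, \norm{(\abs{x}^{-2}\partial^{k}\varphi)\circ S_r}_{C^\alpha(B_2\setminus B_1)} <\infty.
\]
Pulling out constants, for $y\in B_2\setminus B_1$ we have $(\abs{x}^{-2}\partial^k\varphi)\circ S_r(y) = r^{-2}\abs{y}^{-2}(\partial^k\varphi)(ry)$, so the weight $r^{-2}$ already supplies most of the decay; what remains is to control the $C^\alpha$ norm of $(\partial^k\varphi)(ry)$ on $B_2\setminus B_1$.

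For the $C^0$ part this is immediate: by \eqref{eqn:boundgradient} and Lemma \ref{lem:local} the quantities $\abs{\partial^k\varphi}(ry)$ are bounded uniformly for $k=1,2$, giving $\norm{(\abs{x}^{-2}\partial^k\varphi)\circ S_r}_{C^0(B_2\setminus B_1)}\leq Cr^{-2}$. For the H\"older seminorm I would simply bound it by the $C^1$ seminorm in $y$. Differentiating once in $y$ introduces a factor of $r$ via the chain rule, so the key term is $r^{-1}\abs{y}^{-2}(\partial^{k+1}\varphi)(ry)$; by Lemma \ref{lem:local} (which is exactly why $\partial^3\varphi$ was controlled there), this is bounded by $Cr^{-1}$ for both $k=1$ and $k=2$, provided $R\geq 1$. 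Combining these gives $\norm{\cdot}_{C^\alpha(B_2\setminus B_1)} \leq Cr^{-1}$, and multiplying by the weight $r^1$ yields the desired uniform bound.

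There is no substantive obstacle here: the scaling $S_r$ is designed precisely so that every $y$-derivative contributes a factor of $r$, and this factor matches up with the decay of $\abs{x}^{-2}$ so that the weight $\beta=1$ is exactly attained. The whole statement is in effect a repackaging of the pointwise $C^0,C^1,C^2$ bounds on $\varphi$ supplied by Lemma \ref{lem:local} into the language of the weighted H\"older space that will be used in later arguments.
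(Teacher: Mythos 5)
Your proposal is correct and is essentially the paper's own argument: unfold the definition of $\mathcal X_{\alpha,1}$, dominate the $C^\alpha$ norm on the fixed annulus $B_2\setminus B_1$ by the $C^1$ norm, track the factor of $r$ that the chain rule produces under $S_r$, and feed in the uniform pointwise bounds on $\partial\varphi$, $\partial^2\varphi$, $\partial^3\varphi$ from \eqref{eqn:boundgradient} and Lemma \ref{lem:local}.
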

	\begin{proof}
		We prove $r^{-2} \partial^2 \varphi$ only. The other case is the same. By definition, it suffices to show
		\[
			\sup_{\rho\geq R} \rho \norm{ (r^{-2} \partial^2 \varphi)\circ S_\rho}_{C^\alpha(B_2\setminus B_1)}\leq C.
		\]
		To see this,
		\begin{eqnarray*}
			\rho \norm{ (r^{-2} \partial^2 \varphi)\circ S_\rho}_{C^\alpha(B_2\setminus B_1)} &\leq& C \rho^{-1} \norm{(\partial^2 \varphi)\circ S_\rho}_{C^1(B_2\setminus B_1)} \\
													  &\leq& C \sup_{\Omega_{R}} (\abs{\partial^2 \varphi}+ \abs{\partial^3 \varphi}) \leq C.
		\end{eqnarray*}
	\end{proof}
	We rewrite \eqref{eqn:hme} as
	\begin{equation}
		\label{eqn:newhme}
		\triangle \varphi_a = (\delta_{ij}-g^{ij}) \partial^2_{ij} \varphi_a + g^{ij}\Gamma^k_{ij} \partial_k \varphi_a - g^{ij} \tilde{\Gamma}^a_{bc}(\varphi) \partial_i \varphi_b \partial_j \varphi_c.
	\end{equation}
	We denote the three terms in the right hand side by $I$, $II$ and $III$ respectively and find that
	\begin{eqnarray*}
		I&=&  (r^{2}(\delta_{ij}-g^{ij})) \cdot (r^{-2} \partial^2_{ij} \varphi_a) \\
		II&=&  (r^{2}g^{ij}\Gamma^k_{ij}) \cdot (r^{-2} \partial_k \varphi_a) \\
		III &=& (r^{4 }g^{ij} \tilde{\Gamma}^a_{bc}(\varphi))\cdot (r^{-2} \partial_i \varphi_b)\cdot (r^{-2} \partial_j \varphi_c).
	\end{eqnarray*}
	On one hand, Corollary \ref{cor:use} implies that $r^{-2}\partial \varphi$ and $r^{-2} \partial^2 \varphi$ are in $\mathcal X_{\alpha,1}$. On the other hand, our assumptions on $g$ together with the fact that $m+1\geq 5$, imply that 
	\begin{equation}
		\label{eqn:easier}
		(r^{2}(\delta_{ij}-g^{ij})), (r^{2}g^{ij}\Gamma^k_{ij})\in \mathcal X_{\alpha,2}.
	\end{equation}
	We also claim that
	\begin{equation}
		\label{eqn:harder}
		(r^{4}g^{ij} \tilde{\Gamma}^a_{bc}(\varphi)) \in \mathcal X_{\alpha,1}.
	\end{equation}
	To see this, it suffices to show that
	\[
		\tilde{\Gamma}^a_{bc}(\varphi) \in \mathcal X_{\alpha,5}.
	\]
	For any $\rho\geq R$, we estimate $\tilde{\Gamma}^a_{bc}\circ \varphi \circ S_\rho$. For $x\in B_2\setminus B_1$,
	\[
		\abs{\tilde{\Gamma}^a_{bc}\circ \varphi (\rho x)}\leq C \rho^{-(m+1)}
	\]
	by the expansion of $\tilde{g}$ and \eqref{eqn:compare}. For $x,x'\in B_2\setminus B_1$, assume the segment between $x$ and $x'$ lies in $B_2\setminus B_1$ and $\xi$ is the point on the segment for which the mean value inequality holds, we have
	\begin{eqnarray*}
	&&\abs{\tilde\Gamma^a_{bc}\circ \varphi(\rho x) - \tilde \Gamma^a_{bc}\circ \varphi(\rho x')} \\
	&\leq& \abs{\partial_y \tilde \Gamma^a_{bc}(\varphi (\rho\xi)) }\abs{\partial \varphi} \rho \abs{x-x'} \\
	&\leq& C \rho^{-(m+1)} \abs{x-x'}.
	\end{eqnarray*}
	This finishes the proof of the claim.

	Using \eqref{eqn:easier} and \eqref{eqn:harder}, we find that for some $\alpha_1\in (0,1)$ and some $\beta_1\in (2,3)$,  the right hand side of \eqref{eqn:newhme} is in $\mathcal X_{\alpha_1,\beta_1}$.  

	Theorem \ref{thm:linear} implies the existence of some harmonic function $h_a$ such that
	\[
		\varphi_a - h_a \in \mathcal X_{\alpha_1+2,\beta_1-2}.
	\]
	By \eqref{eqn:boundgradient}, $h_a$ grows at most linearly at the infinity and hence we may assume $h_a=A_{ai}x_i+ c_a$ for some constant matrix $(A_{ai})$ and constant vector $c=(c_a)$ (by neglecting terms decaying faster than $\beta_1-2$). In terms of $\varphi$, we have proved
	\begin{equation}
		\label{eqn:preexp}
		\varphi_a(x)= A_{ai}x_i+ c_a + e_{\alpha_1+2,\beta_1-2} \qquad \text{for some} \quad e_{\alpha_1+2,\beta_1-2}\in \mathcal X_{\alpha_1+2,\beta_1-2}.
	\end{equation}

	Instead of $x_k$, we consider a new coordinate given by $\tilde{x}_k=x_k + (A^{-1}c)_k$, the metric tensor has the same expansion with the same coefficients
	\[
		g= (\delta_{ij}+W_{iabj}\frac{\tilde{x}_a \tilde{x}_b}{\abs{\tilde{x}}^{m+2}} + O(\abs{\tilde{x}}^{-(m+1)})) d\tilde{x}_i \otimes d\tilde{x}_j.
	\]
	Since the conclusion of Theorem \ref{thm:main2} cares only about $W$ and $\tilde{W}$. Hence, we may assume without loss of generality that $c_a=0$ in \eqref{eqn:preexp}.

	Moreover, by taking the limit $x\to \infty $ in $g_{ij}(x)=\tilde{g}_{ab}(\varphi(x))\pfrac{\varphi_a}{x_i}\pfrac{\varphi_b}{x_j}$, we know that $A_{ai}$ is an orthogonal matrix, as claimed by the theorem. By rotating $x$, or $y$, we may assume it is the identity matrix. The proof of the theorem is then reduced to proving
	\[
		W=\tilde{W}.
	\]
	To see this, we need to study higher regularity of $\varphi$ at the infinity. We set (by \eqref{eqn:preexp})
	\[
		u(x)=\varphi(x)-x \in \mathcal X_{\alpha_1+2,\beta_1-2}(\Omega_R).
	\]
	The \eqref{eqn:hme} becomes
	\begin{equation}
		\label{eqn:u}
		\triangle u_a = g^{ij}\Gamma^k_{ij} (\delta_{ak}+\partial_k u_a)- g^{ij}\tilde{\Gamma}^a_{bc}(x+u) (\delta_{bi}+\partial_i u_b)(\delta_{cj}+\partial_j u_c) +(\delta^{ij}-g^{ij}) \partial_i\partial_j u_a.
	\end{equation}

	We claim:
	\begin{equation}
		\label{eqn:equ}
		\triangle u_a \in \mathcal X_{\alpha_1,m+2}.
	\end{equation}
	This is true because we have
	\begin{eqnarray*}
		\partial u &\in& \mathcal X_{\alpha_1+1,\beta_1-1} \\
		\partial^2 u &\in& \mathcal X_{\alpha_1,\beta_1} \\
		g^{ij}-\delta_{ij} &\in& \mathcal X_{*,m}\\
		\Gamma^k_{ii} &\in& \mathcal X_{*,m+2}\\
		\tilde{\Gamma}^a_{bb}(x+u(x)) &\in& \mathcal X_{\alpha_1+2,m+2}.
	\end{eqnarray*}
	Here $*$ can be any positive number.
	The first three lines are obvious. The decay rate of the fourth and the fifth line follows from the fact that the Weyl tensor is traceless and this is exactly what is proved in Lemma \ref{lem:idapprox}. For the fifth line, we also need Lemma \ref{lem:composit}. 

	By \eqref{eqn:equ}, Theorem \ref{thm:linear} implies the existence of $A\in \Real^m$ and $B\in (\Real^m)^{\otimes 2}$ such that
	\[
		u_a = A_a \frac{1}{r^{m-2}}+ B_{ai} \frac{x_i}{\abs{x}^m}+ e_{\alpha_1+2,m-\varepsilon}.
	\]
	by an argument used in the proof of \eqref{eqn:preexp}. Here $e_{\alpha_1+2,m-\varepsilon}$ is a function in $\mathcal X_{\alpha_1+2,m-\varepsilon}(\Omega_R)$ for some small $\varepsilon>0$.

	Since $\varphi=x+u(x)$ is the coordinate change, we have 
	\[
		g_{ij} = (\delta_{ai}+ \partial_i u_a) \tilde{g}_{ab}(x+u(x)) (\delta_{bj}+ \partial_j u_b).
	\]
	By comparing the $\abs{x}^{1-m}$ order term of both sides, we find that
	\[
		A=0.
	\]
	By comparing the $\abs{x}^{-m}$ order term, we get
	\[
		s(W)=s(\tilde{W})+ \Psi_4(B).
	\]
	For the definition of $\Psi_4$ and this computation, we refer to the computation before Lemma \ref{lem:findB}. It follows from Lemma \ref{lem:findB} that the intersection of the image of $\Psi_4$ and $\widetilde{{\mathcal W}}$ is trivial. Hence, we must have $W=\tilde{W}$ and $B=0$.
\end{proof}
In the above proof, we have obtained the following {\it almost uniqueness} result.
\begin{cor}
	\label{cor:apply}
	Let $x,y$ be the coordinates in Theorem \ref{thm:main2}. There exist an orthogonal matrix $A$ and vector $c\in \Real^m$ such that
	\[
		y_i= A_{ij}x_j + c_i + e_{\alpha_1+2,m-\varepsilon} \qquad \text{for} \quad i=1,\cdots ,m
	\]
	for some $\alpha_1\in (0,1)$, small $\varepsilon>0$ and
	\[
		e_{\alpha_1+2,m-\varepsilon}\in \mathcal X_{\alpha_1+2,m-\varepsilon}.
	\]
\end{cor}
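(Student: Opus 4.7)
The plan is to revisit the argument used in the proof of Theorem~\ref{thm:main2}: the decomposition claimed by the corollary is in fact an intermediate output of that proof, so my task is mainly to isolate the relevant bootstrap steps. Let $\varphi\colon \Omega_R \to \Omega_{R'}$ be the coordinate transition map sending $x$ to $y$, so that $\varphi^{*}(\tilde g)=g$. By construction $\varphi$ is a harmonic map, and \eqref{eqn:boundgradient} together with Lemma~\ref{lem:local} and Corollary~\ref{cor:use} give uniform $C^{\beta}$ control on $\partial\varphi, \partial^2 \varphi$ at every scale.

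First, I would establish a preliminary linear expansion. Rewriting the harmonic map equation as \eqref{eqn:newhme} and using \eqref{eqn:easier} and \eqref{eqn:harder}, one checks that the right-hand side lies in $\mathcal X_{\alpha_1,\beta_1}$ for some $\alpha_1\in(0,1)$ and $\beta_1\in(2,3)$. Theorem~\ref{thm:linear} then yields a harmonic function $h_a$ with $\varphi_a-h_a\in \mathcal X_{\alpha_1+2,\beta_1-2}$; the linear growth bound on $\varphi$ forces $h_a(x)=A_{ai}x_i+c_a$, and comparing metric coefficients at infinity forces $A\in O(m)$. This is exactly \eqref{eqn:preexp}.

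Next, I would bootstrap. Replacing $y$ by $Ay$ so that $A$ becomes the identity (this only rotates the error term, so the conclusion of the corollary is unaffected), I set $u(x)=\varphi(x)-x-c$ and rewrite the harmonic map equation in the form \eqref{eqn:u}. The decisive input is Lemma~\ref{lem:idapprox}: because the leading expansion coefficients $W$ and $\tilde W$ are traceless and satisfy the first Bianchi identity, the traced Christoffel symbols $\Gamma^{k}_{ii}$ and $\tilde\Gamma^{a}_{bb}$ decay like $|x|^{-(m+2)}$ rather than the naive $|x|^{-(m+1)}$. Combined with $g^{ij}-\delta^{ij}\in \mathcal X_{*,m}$ and the bootstrap bounds on $\partial u,\partial^2 u$ coming from the preliminary step (and Lemma~\ref{lem:composit} for the composition $\tilde\Gamma(x+u)$), this yields $\triangle u\in \mathcal X_{\alpha_1,m+2}$, which is precisely \eqref{eqn:equ}.

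A second application of Theorem~\ref{thm:linear}, together with an expansion of the resulting harmonic correction into spherical harmonics (retaining only terms that grow slower than $|x|^{-(m-\varepsilon)}$ is sufficient), produces an expansion $u_a=A_a|x|^{-(m-2)}+B_{ai}x_i|x|^{-m}+e_{\alpha_1+2,m-\varepsilon}$. Reverting the rotation by $A$, this gives the claimed decomposition $y_i=A_{ij}x_j+c_i+e_{\alpha_1+2,m-\varepsilon}$: the explicit coefficients $A_a,B_{ai}$ are simply absorbed into the error term for the purposes of the corollary. The main obstacle is the bootstrap step, where one must check that the improved decay $|x|^{-(m+2)}$ of $\Gamma^{k}_{ii}$ genuinely survives after composing with $\varphi=x+u$; without the tracelessness of $W$ and $\tilde W$ one would only obtain $\triangle u\in \mathcal X_{\alpha_1,m+1}$, yielding exponent $m-1$ rather than $m-\varepsilon$, which would be too weak.
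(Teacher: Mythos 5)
There is a genuine gap in the final step of your argument. After the bootstrap you correctly arrive at the expansion
\[
u_a = A_a\,\frac{1}{r^{m-2}} + B_{ai}\,\frac{x_i}{\abs{x}^m} + e_{\alpha_1+2,\,m-\varepsilon},
\]
but then you claim that ``the explicit coefficients $A_a, B_{ai}$ are simply absorbed into the error term for the purposes of the corollary.'' That cannot work: the first term decays like $\abs{x}^{-(m-2)}$ and the second like $\abs{x}^{-(m-1)}$, both of which are strictly \emph{slower} than the $\abs{x}^{-(m-\varepsilon)}$ rate that membership in $\mathcal X_{\alpha_1+2,\,m-\varepsilon}$ requires (recall $\norm{v}_{\mathcal X_{\alpha,\beta}}$ involves the weight $r^\beta$, so larger $\beta$ means faster decay). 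Thus $A_a/r^{m-2}$ and $B_{ai}x_i/\abs{x}^m$ are not elements of $\mathcal X_{\alpha_1+2,\,m-\varepsilon}$ unless $A_a = 0$ and $B_{ai} = 0$, and that vanishing is exactly what still needs to be proven.

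The paper closes this gap by running the remainder of the proof of Theorem \ref{thm:main2}: substituting $\varphi = x + u$ into the transformation law $g_{ij} = (\delta_{ai}+\partial_i u_a)\,\tilde{g}_{ab}(x+u)\,(\delta_{bj}+\partial_j u_b)$ and matching terms order by order. Comparing the $\abs{x}^{1-m}$ order forces $A_a = 0$, and comparing the $\abs{x}^{-m}$ order yields $s(W) = s(\tilde{W}) + \Psi_4(B)$, whereupon Lemma \ref{lem:findB} (the triviality of $\mathrm{Im}(\Psi_4) \cap \widetilde{\mathcal W}$) gives $B_{ai} = 0$ and $W = \tilde{W}$. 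Only then does the expansion collapse to $y_i = A_{ij}x_j + c_i + e_{\alpha_1+2,\,m-\varepsilon}$. So the corollary is not a mid-stream byproduct of the bootstrap alone; it requires the full metric-comparison argument that concludes the proof of Theorem \ref{thm:main2}.
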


\section{Renormalized volume}
\label{sec:application}

In this section, we generalize the renormalized volume defined by \cite{biquard2023renormalized} for four dimensional ALE to general dimensions. The definition has been given in Section \ref{sec:intro}. The goal of this section is to prove Theorem \ref{thm:volume}. We also show that in dimension four, this definition coincides with the one defined in \cite{biquard2023renormalized}.

\subsection{The definition}
\label{sub:welldefine}
Assume that $(M,g)$ satisfies the assumptions in Theorem \ref{thm:main1} and for simplicity assume that $\Omega$ is the only end. Notice that the renormalized volume depends on the finite group $\Gamma$ and hence in this section, we do not assume that $\Gamma$ is trivial. The definition itself depends only on the geometry of the end.

Using the Bianchi coordinate $x$ given by Theorem \ref{thm:main1}, we have the expansion of the metric $g$ (lifted to the universal cover of $\Omega\setminus K$, still denoted by $g$)
\begin{equation}
	\label{eqn:expx}
	g_{il}=\delta_{il}+W_{ijkl}\frac{x_jx_k}{\abs{x}^{m+2}} +O(\abs{x}^{-(m+1)}).
\end{equation}
Set 
\[
	B_r=M \setminus \set{\abs{x}>r}	 \qquad \text{and} \qquad \tilde{B}_r= \set{x\in \Real^m|\, \abs{x}<r}/\Gamma.
\]
The renormalized volume $\mathcal V$ is defined to be the limit
\[
	\lim_{r\to \infty } V_g(B_r)-V_{g_e}(\tilde{B}_r),
\]
which we claim to exist.
To see this, we derive from \eqref{eqn:expx} that
\begin{equation}
	\label{eqn:final2}
	\begin{split}
		dV_g =& \sqrt{{\rm det} g} dx \\
		=& \left(1 + \frac{1}{2}{\rm Tr}(W_{ijkl}\frac{x_jx_k}{\abs{x}^{m+2}})+ O(\abs{x}^{-(m+1)})\right) dV_{g_e} \\
		=& (1+ O(\abs{x}^{-(m+1)}))dV_{g_e}.
	\end{split}
\end{equation}
Hence, for any $s>r>0$,
\begin{eqnarray*}
	\abs{\int_{B_s\setminus B_r} dV_g - \int_{\tilde{B}_s\setminus \tilde{B}_r} dV_{g_e}} &=& {\int_{\tilde{B}_s\setminus \tilde{B}_r} O(\abs{x}^{-(m+1)}) dx} \\
											      &\leq& C \abs{\int_r^s \frac{1}{t^2} dt}.
\end{eqnarray*}
The proof of the claim is done.

To justify the definition, we need to show that if $y$ is another coordinate at the infinity satisfying the assumptions of Theorem \ref{thm:main2}, then
\begin{equation}
	\label{eqn:volume}
	\lim_{r\to \infty } V_g(B_r^x)-V_{g_e}(\tilde{B}_r^x) = \lim_{r \to \infty } V_g(B_r^y)-V_{g_e}(\tilde{B}_r^y).
\end{equation}
By Corollary \ref{cor:apply}, we have an orthogonal matrix $A$ and a translation vector $c$ such that
\[
	y=A x + c + e_{\alpha_1+2,m-\varepsilon}
\]
for some $e_{\alpha_1+2,m-\varepsilon}\in \mathcal X_{\alpha_1+2,m-\varepsilon}$. A rotation does not change either $B_r$ or $\tilde{B}_r$, hence we may assume that $A$ is the identity matrix. In order to show that the definition of the renormalized volume is the same for coordinate $x$ and coordinate $y$, it suffices to show
\begin{equation}
	\label{eqn:remains}
	\lim_{r\to \infty } V_g(B_r^x) - V_g(B_r^y)=0.
\end{equation}
Define $\tilde{y}=y-c$. We first claim that
\[
	\lim_{r\to \infty } V_g(B_r^x) - V_g(B_r^{\tilde{y}})=0.
\]
In fact, by definition of $\tilde{y}$, we have
\[
	\abs{x}-C \abs{x}^{-(m-\varepsilon)} \leq \abs{\tilde{y}}\leq \abs{x}+C \abs{x}^{-(m-\varepsilon)}
\]
when $\abs{x}$ is sufficiently large. It follows that
\[
	B^x_{r-Cr^{-(m-\varepsilon)}}\subset B^{\tilde{y}}_r \subset B^x_{r+C r^{-(m-\varepsilon)}},
\]
which implies that 
\[
	\lim_{r\to \infty } V_g(B_r^x)-V_g(B_r^{\tilde y})=0.
\]
Hence, the claim is proved. The proof of \eqref{eqn:remains} is reduced to
\[
	\lim_{r\to \infty } V_g(B_r^{\tilde{y}})-V_g(B_r^y)=0,
\]
or equivalently,
\begin{equation}
	\label{eqn:final1}
	\lim_{r\to \infty } V_g(B_r^{\tilde{y}}\setminus B_r^y)-V_g(B_r^y\setminus B_r^{\tilde{y}})=0.
\end{equation}
Given that $\tilde{y}=y-c$, for sufficiently large $r$, if $g_e$ is the Euclidean metric determined by $y$ (or $\tilde{y}$) outside a compact set, then
\[
	V_{g_e}(B_r^{\tilde{y}} \setminus  B_r^y) + V_{g_e}(B_r^y \setminus B_r^{\tilde{y}}) \leq C r^{m-1}
\]
and
\[
	V_{g_e}(B_r^{\tilde{y}}\setminus B_r^y)-V_{g_e}(B_r^y\setminus B_r^{\tilde{y}})=0.
\]
Given these elementary facts, \eqref{eqn:final1} follows from \eqref{eqn:final2}. This concludes the proof of the first claim in Theorem \ref{thm:volume}.

\subsection{Non-positivity of the renormalized volume}
\label{sub:ricci}
The aim of this subsection is to prove the remaining part of Theorem \ref{thm:volume}. The proof uses a result of \cite{ros1987} (see Remark 3.1 of \cite{biquard2023renormalized}). For $B_r$ defined above, let $H_g$ be the mean curvature of $\partial B_r$ and $dA_g$ be the volume form of $\partial B_r$ with respect to the metric $g$. Since the Ricci curvature of $g$ is non-negative, Theorem 1 in \cite{ros1987} implies
\[
	\int_{\partial B_r}\frac{1}{H_g} dA_g \geq m V_g(B_r).
\]
\begin{rem}
	Theorem 1 in \cite{ros1987} was stated for manifolds instead of orbifolds. However, the proof involves only pointwise computations and integration by parts, which extended to the orbifold setting directly. Computations in the following proof of the rigidity part of Theorem \ref{thm:volume} should be understood similarly.
\end{rem}
Let $H_e$ be the mean curvature of $\partial \tilde{B}_r$ with respect to the flat metric $g_e$, which is just $1/r$. Let $dA_e$ be the standard volume form on $\partial \tilde{B}_r$. Then we have trivially
\[
	\int_{\partial \tilde{B}_r} \frac{1}{H_e} dA_e = m V_e(\tilde{B}_r)
\]
To show that $\mathcal V\leq 0$, it suffices to show
\begin{equation}
	\label{eqn:goal}
	\lim_{r\to \infty } \int_{\partial \tilde{B}_r}\frac{1}{H_e}dA_e - \int_{\partial B_r}\frac{1}{H_g}dA_g=0.
\end{equation}
To see this, we need to study the asymptotics of $\int_{\partial B_r}\frac{1}{H_g}dA_g$ up to high order. Via the coordinate map, we identify $\partial B_r$ with $\partial \tilde{B}_r$. They are the same hypersurface. However, the unit normal, the volume form and the mean curvature depend on the ambient metric. In order to compare these quantities, we start with $\nabla_g r$,
\begin{eqnarray*}
	\nabla_g r &=& (g^{ij} \partial_i r) \partial_j\\
		   &=& (\delta_{ij}- W_{iabj}\frac{x_ax_b}{r^{m+2}} + O(r^{-(m+1)})) \frac{x_i}{r} \partial_j \\
		   &=& \nabla_{g_e} r + O(r^{-(m+1)}).
\end{eqnarray*}
Here in the last line above, we have used the first Bianchi identity of $W$. 
Taking the squared norm, we have
\begin{eqnarray*}
	\abs{\nabla_g r}_g^2 &=& g_{ij} (\frac{x_i}{r})(\frac{x_j}{r}) +O(r^{-(m+1)})\\
			     &=& 1 + O(r^{-(m+1)}).
\end{eqnarray*}
Denote the unit normal vector of $\partial B_r$ (with respect to $g$) by $n_g$ and that of $\partial \tilde{B}_r$ (with respect to $g_e$) by $n_{g_e}$. Since $\nabla_g r$ is perpendicular to $\partial B_r$, we have
\begin{equation}
	\label{eqn:goodng}
	n_g = \frac{\nabla_g r}{ \abs{\nabla_g r}} = n_{g_e} + O(r^{-(m+1)}).
\end{equation}
Recall that we have proved $dV_g=(1+O(r^{-(m+1)}))dV_{g_e}$.
Using the formula
\[
	dA_g = (\iota_{n_g} dV_g)|_{\partial B_r}; \quad dA_{g_e}= (\iota_{n_{g_e}} dV_{g_e})|_{\partial \tilde B_r},
\]
we obtain
\begin{equation}
	\label{eqn:goodAg}
	dA_g= (1+ O(r^{-(m+1)}))dA_{g_e}.
\end{equation}

Recall the formula for the mean curvature $H_g= \frac{1}{m-1}\operatorname{div}_g n_g$, where
\begin{eqnarray*}
\operatorname{div}_g n_g&=&\frac{1}{\sqrt{\operatorname{det} g}} \partial_i\left(\sqrt{\operatorname{det} g}\left(n_g\right)^i\right)  \\
&=&\partial_i (n_g)^i + (n_g)^i \partial_i \log \sqrt{\det g}.
\end{eqnarray*}
On the one hand, the expansion of metric gives
$$
\sqrt{\det g}= 1+ O(r^{-m-1}) \quad \text{and} \quad \partial_i \log \sqrt{\det g} = O(r^{-m-2}).
$$
On the other hand,
\begin{eqnarray*}
	\partial_i(n_g)^i &=& \partial_i(n_e)^i + O(r^{-m-2}) \\
	&=& \frac{m-1}{r} + O(r^{-m-2}) .
\end{eqnarray*}
In summary, we get
\[
	H_g=\frac{1}{r}+ O(r^{-(m+2)})= H_{g_e}+ O(r^{-(m+2)})
\]
and therefore
\begin{equation}
	\label{eqn:goodH}
	\frac{1}{H_g}= \frac{1}{H_{g_e}}+ O(r^{-m})=\frac{1}{H_{g_e}}(1+O(r^{-(m+1)})).
\end{equation}
It is obvious that \eqref{eqn:goal} follows from \eqref{eqn:goodAg} and \eqref{eqn:goodH}. 

In case the renormalized volume $\mathcal V$ is zero, the proof above implies that
\begin{equation}
	\label{eqn:rigidity}
	\lim_{r\to \infty } \left( \int_{\partial B_r} \frac{1}{H_g} dA_g - m V_g(B_r) \right)=0.
\end{equation}
We need to prove that $M$ is isometric to $\Real^m/\Gamma$. The following proof is adapted from the proof of Theorem 1 in \cite{ros1987}. There was a solution $u_r$ satisfying
\[
	\begin{cases}
		\triangle_g u_r =1 & \text{in} \quad B_r \\
		u_r|_{\partial B_r} = 0.&
	\end{cases}
\]

{\bf Claim 1.} Setting
\[
	E_r= \frac{1}{m}(\triangle_g u_r)^2 - \abs{\nabla_g^2 u_r}^2 \leq 0, 
\]
we claim that
\[
	\lim_{r\to \infty } \int_{B_r} E_r dV_g = 0.
\]

{\bf Claim 2.} Setting
\[
	\tilde{u}_r= u_r + \frac{1}{2m}r^2,
\]
we claim that $\tilde{u}_r$ sub-converges locally smoothly to a limit $\tilde{u}_\infty $ that is defined on $M$ and that the traceless Hessian of $\tilde{u}_\infty $ vanishes everywhere.

Assume the claims for now. By definition, $E_r$ remains the same with $u_r$ replaced by $\tilde{u}_r$. Taking the limit, we obtain
\[
	\frac{1}{m}(\triangle_g \tilde{u}_\infty )^2 - \abs{\nabla_g^2 \tilde{u}_\infty }^2 =0 \qquad \text{on} \quad M.
\]
Moreover, by the equation of $u_r$, we have
\[
	\triangle_g \tilde{u}_\infty =1.
\]
In summary, if $X=\nabla_g \tilde{u}_\infty $, we have just proved that
\[
	\mathcal L_X g = \frac{2}{m}g.
	\]
	Namely, $X$ is a conformal Killing vector field. Then $(M,g)$ is isometric to $\Real^m/\Gamma$.

\begin{proof}
	[Proof of Claim 1.] 
	Using the Reilly's formula
	\[
		\int_{B_r} (\triangle_g u_r)^2 - \abs{\nabla_g^2 u_r}^2 dV_g = \int_{\partial B_r} (m-1) H_g \abs{\partial_{n_g} u_r}^2 dA_g,
	\]
	we have
	\[
		\frac{m-1}{m} V_g(B_r)	+ \int_{B_r} E_r dV_g = (m-1) \int_{\partial B_r} H_g \abs{\partial_{n_g} u_r}^2 dA_g.
	\]
	Following \cite{ros1987},
	\begin{eqnarray*}
		V_g(B_r)^2 &=& \left( \int_{\partial B_r} \partial_{n_g} u_r dA_g \right)^2 \\
			   &\leq& \int_{\partial B_r} H_g \abs{\partial_{n_g} u_r}^2 dA_g \cdot \int_{\partial B_r} \frac{1}{H_g} dA_g \\
			   &=& \left( \frac{1}{m} V_g(B_r) + \frac{1}{m-1}\int_{B_r} E_r dV_g \right) \left( m V_g(B_r) + o(1) \right).
	\end{eqnarray*}
	Here in the last line above, we used \eqref{eqn:rigidity}.
	Claim 1 follows from the above inequality and the fact that $E_r\leq 0$.
\end{proof}

\begin{proof}
	[Proof of Claim 2.]
	While the coordinate is only defined at the infinity, we can extend the domain of the function $\frac{1}{2m}\abs{x}^2$ smoothly inside $M$. Denote the new function by $u_a$. Given the expansion of the metric at the infinity, we may compute
	\begin{equation}
		\label{eqn:goodua}
		\triangle_g u_a = 1 + O(\abs{x}^{-(m+\varepsilon)}).
	\end{equation}
	Hence, for any $r$ sufficiently large, we have
	\begin{equation}
		\label{eqn:gooddiff}
		\begin{cases}
			\triangle_g (\tilde u_r- u_a) = 1- \triangle_g u_a = O(\abs{x}^{-(m+\varepsilon)}) & \text{in} \quad B_r \\
			(\tilde u_r-u_a) = 0 & \text{on} \quad \partial B_r.
		\end{cases}	
	\end{equation}
	Multiplying both sides by $\tilde{u}_r - u_a$ and integrating by parts, we get
	\[
		\int_{B_r} \abs{\nabla_g (\tilde{u}_r -u_a)}^2 dV_g = \int_{B_r} (\triangle_g u_a-1) (\tilde{u}_r-u_a)dV_g.
	\]
	Since we have uniform Sobolev inequality for $B_r$ (due to the Ricci lower bound, volume non-collapsing and $m\geq 3$), we use the H\"older inequality to get
	\[
		\norm{\tilde{u}_r -u_a}_{L^{\frac{2m}{m-2}}(B_r)} \leq C \norm{\triangle_g u_a-1}_{L^{\frac{2m}{m+2}}(B_r)}\leq C.
	\]
	Here in the last inequality above, we have used \eqref{eqn:goodua}. Note that the above estimate holds uniformly and the constant $C$ is independent of $r$. Together with \eqref{eqn:gooddiff}, we obtain uniform higher order estimate on any compact set for $\tilde{u}_r$ so that Claim 2 holds.
\end{proof}

\subsection{Compatible with the known definition in dimension four}
\label{sub:same}
Finally, when $m=4$, we show that the renormalized volume defined here is the same as the one defined in \cite{biquard2023renormalized}. In fact, it suffices to show the CMC coordinate that was used to define the renormalized volume there satisfies the assumptions of Theorem \ref{thm:main2}.

In Theorem B of \cite{biquard2023renormalized}, there is a coordinate in which
\[
	g= g_e + h_0 + O(\abs{x}^{-5})
\]
where $h_0$ is some symmetric $2$-tensor defined on $\Real^4\setminus \set{0}$ satisfying
\begin{equation}
	\label{eqn:all}
	\mathcal L_{r\partial_r} h_0= -2h_0, \partial_r \llcorner h_0=0, {\mathrm tr}_{g_e}h_0=0, {\rm div}_{g_e}h_0=0, \triangle_{g_e}h_0=0.
\end{equation}
In Proposition 2.6 therein, the dimension of all possible $h_0$ satisfying the above conditions was proved to be $10$. It is well known that the dimension of the space of Weyl tensors ($m=4$) is also $10$. Moreover, it is easy to verify directly that if
\[
	h_0= W_{iabj}\frac{x_ax_b}{\abs{x}^6}
\]
for some $W\in \mathcal W$, then \eqref{eqn:all} holds. Therefore, the CMC coordinate satisfies the assumption of Theorem \ref{thm:main2} and hence defines the same renormalized volume.

\bibliographystyle{alpha}
\bibliography{foo}

\begin{thebibliography}{BKN89}

\bibitem[And01]{anderson}
Michael~T. Anderson.
\newblock {$L^2$} curvature and volume renormalization of {AHE} metrics on
  4-manifolds.
\newblock {\em Math. Res. Lett.}, 8(1-2):171--188, 2001.

\bibitem[Ban90]{bando1990bubbling}
Shigetoshi Bando.
\newblock Bubbling out of {E}instein manifolds.
\newblock {\em Tohoku Math. J. (2)}, 42(2):205--216, 1990.

\bibitem[BC14]{brendle}
Simon Brendle and Otis Chodosh.
\newblock A volume comparison theorem for asymptotically hyperbolic manifolds.
\newblock {\em Comm. Math. Phys.}, 332(2):839--846, 2014.

\bibitem[BH23]{biquard2023renormalized}
Olivier Biquard and Hans-Joachim Hein.
\newblock The renormalized volume of a 4-dimensional {R}icci-flat {ALE} space.
\newblock {\em J. Differential Geom.}, 123(3):411--429, 2023.

\bibitem[BK78]{bourguignon1978}
Jean-Pierre Bourguignon and Hermann Karcher.
\newblock Curvature operators: pinching estimates and geometric examples.
\newblock {\em Ann. Sci. \'{E}cole Norm. Sup. (4)}, 11(1):71--92, 1978.

\bibitem[BKN89]{bando1989on}
Shigetoshi Bando, Atsushi Kasue, and Hiraku Nakajima.
\newblock On a construction of coordinates at infinity on manifolds with fast
  curvature decay and maximal volume growth.
\newblock {\em Invent. Math.}, 97(2):313--349, 1989.

\bibitem[Bry14]{web}
Robert Bryant.
\newblock Riemann's formula for the metric in a normal neighborhood.
\newblock {\em
  {https://mathoverflow.net/questions/185527/riemanns-formula-for-the-metric-in-a-normal-neighborhood}},
  2014.

\bibitem[CEV17]{chodosh2017}
Otis Chodosh, Michael Eichmair, and Alexander Volkmann.
\newblock Isoperimetric structure of asymptotically conical manifolds.
\newblock {\em J. Differential Geom.}, 105(1):1--19, 2017.

\bibitem[Che20]{chen2020expansion}
Youmin Chen.
\newblock On expansions of {R}icci flat {ALE} metrics in harmonic coordinates
  about the infinity.
\newblock {\em Commun. Math. Stat.}, 8(1):63--90, 2020.

\bibitem[CT94]{cheeger1994}
Jeff Cheeger and Gang Tian.
\newblock On the cone structure at infinity of {R}icci flat manifolds with
  {E}uclidean volume growth and quadratic curvature decay.
\newblock {\em Invent. Math.}, 118(3):493--571, 1994.

\bibitem[CY24]{chen2025harmonicformsalericciflat}
Gao Chen and Hao Yan.
\newblock Harmonic forms on ale ricci-flat 4-manifolds, 2024.

\bibitem[DK81]{deturk}
Dennis~M. DeTurck and Jerry~L. Kazdan.
\newblock Some regularity theorems in {R}iemannian geometry.
\newblock {\em Ann. Sci. \'Ecole Norm. Sup. (4)}, 14(3):249--260, 1981.

\bibitem[Gra00]{graham}
C.~Robin Graham.
\newblock Volume and area renormalizations for conformally compact {E}instein
  metrics.
\newblock In {\em The {P}roceedings of the 19th {W}inter {S}chool ``{G}eometry
  and {P}hysics'' ({S}rn\'i, 1999)}, number~63, pages 31--42, 2000.

\bibitem[Kal95]{kalf1995}
Hubert Kalf.
\newblock On the expansion of a function in terms of spherical harmonics in
  arbitrary dimensions.
\newblock {\em Bull. Belg. Math. Soc. Simon Stevin}, 2(4):361--380, 1995.

\bibitem[KS24]{kroencke2022optimal}
Klaus Kr\"oncke and \'Aron Szab\'o.
\newblock Optimal coordinates for {R}icci-flat conifolds.
\newblock {\em Calc. Var. Partial Differential Equations}, 63(7):Paper No. 188,
  41, 2024.

\bibitem[Mey63]{meyers1963}
Norman Meyers.
\newblock An expansion about infinity for solutions of linear elliptic
  equations.
\newblock {\em J. Math. Mech.}, 12:247--264, 1963.

\bibitem[Ozu24]{ozuch2024}
Tristan Ozuch.
\newblock Integrability of {E}instein deformations and desingularizations.
\newblock {\em Comm. Pure Appl. Math.}, 77(1):177--220, 2024.

\bibitem[Ros87]{ros1987}
Antonio Ros.
\newblock Compact hypersurfaces with constant higher order mean curvatures.
\newblock {\em Rev. Mat. Iberoamericana}, 3(3-4):447--453, 1987.

\bibitem[Spi79]{spivak1979}
Michael Spivak.
\newblock {\em A comprehensive introduction to differential geometry. {V}ol.
  {II}}.
\newblock Publish or Perish, Inc., Wilmington, DE, second edition, 1979.

\bibitem[Tay97]{taylorbook}
Michael~E. Taylor.
\newblock {\em Partial differential equations. {III}}, volume 117 of {\em
  Applied Mathematical Sciences}.
\newblock Springer-Verlag, New York, 1997.
\newblock Nonlinear equations, Corrected reprint of the 1996 original.

\end{thebibliography}

\end{document}